\newtheorem{theorem}{Theorem}[section]
\newtheorem{lemma}[theorem]{Lemma}
\newtheorem{proposition}[theorem]{Proposition}
\newtheorem{corollary}[theorem]{Corollary}
\newtheorem{definition}[theorem]{Definition}
\theoremstyle{remark}
\newcommand{\Li}{\operatorname{Li}}
\newcommand{\li}{\operatorname{li}}
\newcommand{\Res}{\operatorname{Res}}
\newcommand{\Slim}{\operatorname{S-lim}}
\newcommand{\cA}{{\mathcal A}}
\newcommand{\cO}{{\mathcal O}}
\newcommand{\cR}{{\mathcal R}}
\newcommand{\cS}{{\mathcal S}}
\newcommand{\CC}{{\mathbb C}}
\newcommand{\PP}{{\mathbb P}}
\newcommand{\QQ}{{\mathbb Q}}
\newcommand{\RR}{{\mathbb R}}
\newcommand{\ZZ}{{\mathbb Z}}
\renewcommand{\a}{\alpha}
\renewcommand{\b}{\beta}
\newcommand{\g}{\gamma}
\newcommand{\eps}{\epsilon}
\begin{document}
\title[Monodromies of singularities of the Hadamard and e\~ne product]
{Monodromies of singularities of the Hadamard and e\~ne product}

\subjclass[2000]{08A02, 30D99, 30F99.} \keywords{E\~ne product, big Witt ring, Hadamard product, 
singularities with monodromy, transalgebraic class.}

\author[R. P\'{e}rez-Marco]{Ricardo P\'{e}rez-Marco}

\address{Ricardo P\'erez-Marco\newline 
\indent  Institut de Math\'ematiques de Jussieu-Paris Rive Gauche, \newline
\indent CNRS, UMR 7586, \newline
\indent Universit\'e de Paris, B\^at. Sophie Germain, \newline 
\indent 75205 Paris, France}

\email{ricardo.perez-marco@gmail.com}

\begin{abstract}
\noindent
We prove that singularities with holomorphic monodromies are preserved by the Hadamard product. We find an explicit formula 
for the monodromy of the singularities, and similar formulas for the exponential e\~ne product. Using these formulas we 
get new direct proofs of classical results and the invariance of some rings of functions by Hadamard and e\~ne product 
(which is the product of the big Witt ring). This explains the good behavior of the e\~ne product on divisors.
\end{abstract}

\maketitle

% 
% { \centerline{\sc Abstract}}

% \bigskip

%\newpage

% \tableofcontents

% 
% \bigskip

% \medskip

% \medskip
\footnotesize{\noindent\textit{``...la nature du point singulier $\a\b$ ne d\'epend que de la nature 
des points singuliers $\a$ et $\b$...''}(\'Emile Borel on the Hadamard product, 1898)}

\smallskip

% \noindent (\'Emile Borel on the Hadamard product, 1898)

% \medskip

\begin{figure}[ht] \label{fig:monodromy2}
\centering
\resizebox{9cm}{!}{\includegraphics{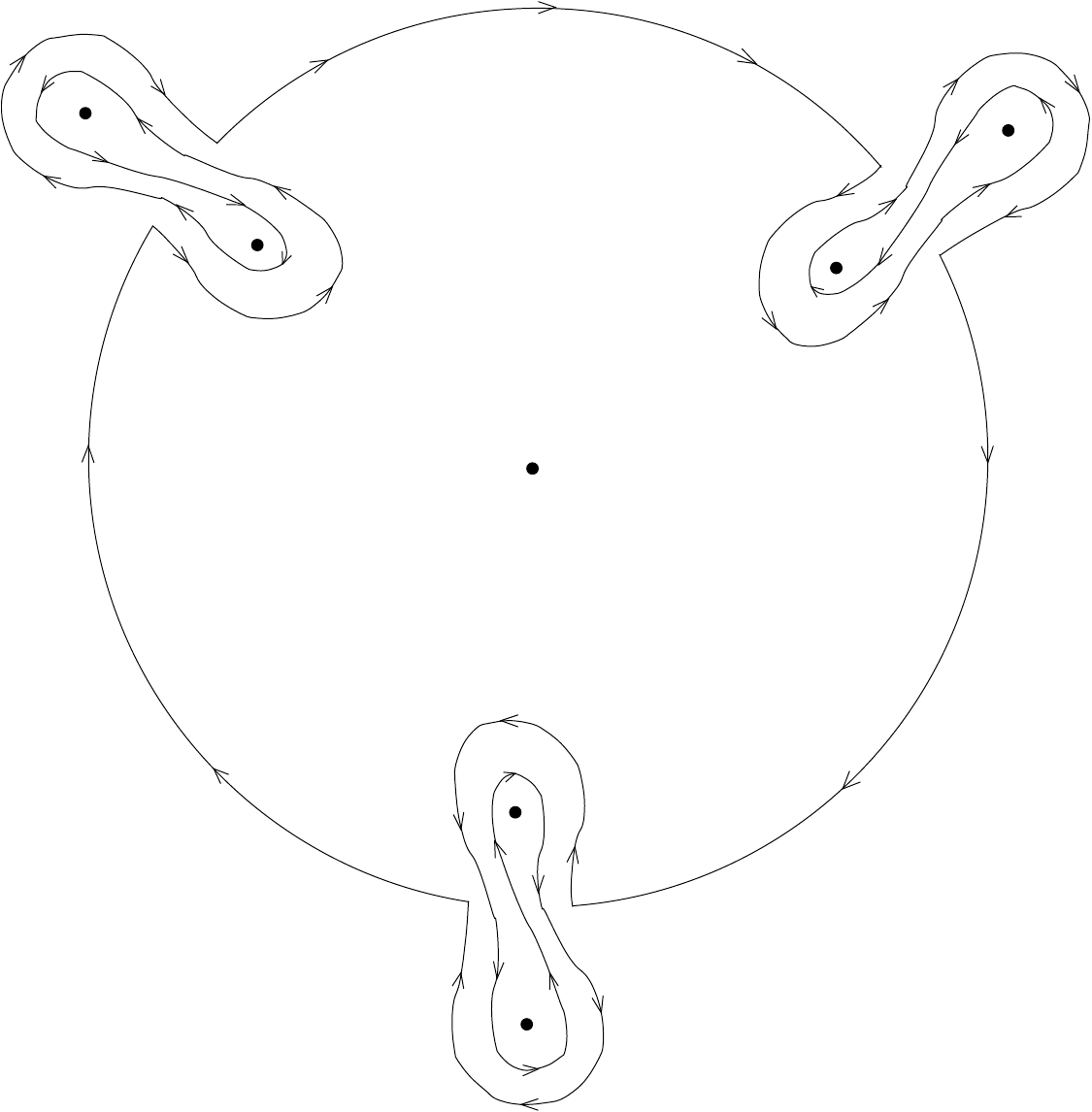}}    % name of the file - without extension
\put (-215,245) {\scriptsize $\alpha_1$}
\put (-180,180) {\small $\frac{z_0}{\beta_1}$}
\put (-48,240) {\scriptsize $\alpha_2$}
\put (-90,180) {\small $\frac{z_0}{\beta_2}$}
\put (-165,20) {\scriptsize $\alpha_3$}
\put (-140,100) {\small $\frac{z_0}{\beta_3}$}
\put (-240,100) {\small $\hat\eta_{z_0}$}
\put (-125,150) {\small $0$}
\caption{\footnotesize{Choreographic monodromy integration contour.}} 
\end{figure}

\medskip

\newpage

%%%%%%%%%%%%%%%%%%%%%%%%%%%%%%%%%%%%%%%%%%%%%%%%%%%%%%%%%%%%%%%%%%
\section{Introduction and background.} \label{sec:introduction}
%%%%%%%%%%%%%%%%%%%%%%%%%%%%%%%%%%%%%%%%%%%%%%%%%%%%%%%%%%%%%%%%%%

\subsection{Hadamard and e\~ne product.}

Given  two power series
\begin{align*}
F(z) &=A_0+A_1 z+A_2 z^2+\ldots =\sum_{n\geq 0} A_n \, z^n\\ 
G(z) &=B_0+B_1 z+B_2 z^2+\ldots =\sum_{n\geq 0} B_n \, z^n
\end{align*}
their classical \textit{Hadamard product}  is the power series
$$
F\odot G (z) =A_0B_0+A_1 B_1 z+A_2 B_2 z^2+\ldots =\sum_{n\geq 0} A_n B_n \, z^n
$$
and their \textit{exponential e\~ne product} is defined by
$$
F\star_e G (z) = -A_1 B_1 z -2 A_2 B_2 z^2+\ldots =-\sum_{n\geq 0} n A_n B_n \, z^n
$$
The Hadamard product and the exponential e\~ne product are commutative internal operations on the additive 
group of formal power series $\CC[[z]]$ (or $A[[z]]$ for a commutative ring $A$), and $(\CC[[z]], +,\odot)$
and $(\CC[[z]], +,\star_e)$ are commutative rings. These products are also internal operations on the additive 
subgroup $\CC\{\{z\}\}$ of power series with a positive radius of convergence, and $(\CC\{\{z\}\}, +,\odot)$
and $(\CC\{\{z\}\}, +,\star_e)$ are commutative subrings.

Hadamard (1899, \cite{Ha}) proved the \textit{Hadamard Multiplication Theorem} that locates the singularities of 
the principal branch of $F\odot G$ which are products of singularities of $F$ and $G$. 
The exponential e\~ne product has also a beautiful interpretation in terms of divisors, i.e. zeros and poles (2019, \cite{PM1}). More precisely,
if $a_0=b_0=0$ and we consider the exponential of the power series
\begin{align*}
f(z) &=\exp (F(z))= 1+a_1 z+a_2 z^2+\ldots =1+\sum_{n\geq  1} a_n \, z^n\\ 
g(z) &=\exp (G(z))=1+b_1 z+b_2 z^2+\ldots =1+\sum_{n\geq 1} b_n \, z^n
\end{align*}
and we define the e\~ne product as
$$
f\star g (z)=\exp (F\star_e G (z))=1+c_1 z+c_2 z^2+\ldots =1+\sum_{n\geq 1} c_n \, z^n
$$
then the coefficient $c_n$ is a universal polynomial with integer coefficients on 
$a_1, a_2, \ldots, a_n, b_1, b_2,\ldots, b_n$. Thus the exponential e\~ne product is the 
bilinearization of the e\~ne product through the exponential.
The universality of the expression of the coefficients $(c_n)$ allows to define 
in general the e\~ne product over 
a commutative ring of coefficients $A$, more precisely, if $\cA =1+A[[z]]$, then $\cA$ is a group for the multiplication
and when we adjoint the e\~ne product we obtain a commutative ring $(\cA, .,\star)$.

A brief digression is in order to explain that the ring $(\cA, .,\star)$ is 
the ``big Witt ring'' appearing in different forms in algebraic geometry, the theory of formal groups, etc It is difficult to 
find a proper reference for the big Witt ring. H. Lenstra has some nice unpublished notes \cite{Len} where he apologizes:

\medskip

\textit{``The literature (...) is in a somewhat unsatisfactory state: nobody seems to have any 
interest in Witt vectors beyond applying them for a purpose, and they are often treated in appendices 
to papers devoting to something else; also, the construction usually depends on a set of implicit or 
unintelligible formulae. Apparently, anybody who wishes to understand Witt vectors needs to construct them personally. 
That is now happening to myself.''}

\medskip

The author can only corroborate Lenstra's words, since the same occurred to him. The author has been studying this structure from the analytic point of view 
for more than 25 years, noticing in the last decade formal similarities with parts of the literature (like for example \cite{Haz}). Common mathematical wisdom 
tells us that important progress is achieved when fundamental mathematical objects are named. But the multiplicative operator in the big Witt ring didn't 
even had a proper name in the literature, which incidentally corroborates Lenstra's observation. 
Unaware of the big WItt rings, the author named it the ``e\~ne product'' which incidentally is nicer than 
``the multiplication of the big Witt ring''\footnote{The e\~ne product  seems to the author like a missing 
fundamental operation, as in many alphabets where the Spanish e\~ne letter is missing (\~n, pronounced ``ny'' as in ``canyon'').}. Our point of view 
focuses on the analytic properties of the e\~ne product (as in the present article), that are inexistent even today in the literature. For this reason we focus
on base rings $A$ that are of characteristic $0$ (in particular we don't need to take logarithmic derivatives to make sense of exponential formulas).
For an introduction and relevant analytic formulas from our point of view we refer to \cite{PM1} (2019). The e\~ne product is quickly introduced in an
elementary way through the action on polynomials, which we do next, closing this digression.

\medskip

Coming back to the previous notations, when $f$ and $g$ are non-constant polynomials 
(note in particular that in such case $F$ and $G$ are not polynomials), 
with respective 
roots $(\alpha)$ and $(\beta)$ (counted with multiplicity), so that 
\begin{align*}
f(z) &=\prod_{\a} \left (1-\frac{z}{\a}\right ) \\ 
g(z) &=\prod_{\b} \left (1-\frac{z}{\b}\right )
\end{align*}
then we have the remarkable formula
$$
f\star g (z)=\prod_{\a, \b } \left (1-\frac{z}{\a \b}\right ) \ .
$$
This interpretation with zeros can be taken as the starting point of the theory of 
the e\~ne product as is done in  \cite{PM1} (2019). Then the divisor property can be extended 
to entire and meromorphic functions on the plane, and even to those with transcendental singularities (2019, \cite{PM2}). 
The e\~ne product is both natural from the algebraic point of view, and from the analytic one. 
For example it respects Hadamard-Weierstrass factorizations (see section 9 of \cite{PM2}).
This property of zeros is closely related to the Hadamard Multiplication Theorem, although 
the Hadamard product lacks of a direct interpretation in terms of zeros. We have a neat formula
relating the exponential e\~ne product and the Hadamard product, namely
$$
F\star_e G =-K_0 \odot F \odot G
$$
where $K_0$ is the Koebe function 
$$
K_0(z)=-\sum_{n\geq 1} n \, z^n =-\frac{z}{(1-z)^2}
$$
Zeros and poles of a function $f$ correspond to singularities of $F$ with constant monodromies that are an integer multiple $n$ (the multiplicity) 
of $2 \pi i$·.

\medskip

For the definition of the e\~ne product and other algebraic and analytic properties and formulas 
we refer to \cite{PM1} where we extend 
the divisor interpretation to meromorphic functions with zeros and poles. In \cite{PM2}  we extend further the e\~ne product to the 
transalgebraic class. For the Riemann sphere $\PP^1\CC$, this transalgebraic class is composed by functions with a finite 
number of exponential singularities. These are of the form $R_0\exp R_1$ where $R_0$ and $R_1$ are rational functions. We prove in \cite{PM2} that 
the divisor interpretation still holds for exponential singularities, which is natural when we view these singularities \textit{\`a la Euler} 
as zeros or poles of ``infinite order''. From this point of view, we naturally introduce ``e\~ne poles''. 
Then functions with singularities with non-trivial monodromies arise naturally, 
in particular the hierarchy of polylogarithms. 

\medskip

Therefore, it becomes natural to investigate the extension of the divisor interpretation of the e\~ne product to 
singularities with non-trivial monodromy, i.e. non-uniform singularities in the XIX-th century terminology. A uniform 
transcendental singularity is by definition an isolated singularity without monodromy.

\medskip

Almost simultaneously to the discovery by Hadamard in 1898 (\cite{Ha}) of his Theorem determining the location of the 
singularities of $F\odot G$, \'Emile Borel (\cite{Bo}) proved that if $F$ and $G$ 
have uniform isolated singularities, i.e. isolated without monodromy, then the singularities of $F\odot G$ 
are also isolated and uniform. 
This result is related to the action of the e\~ne product on exponential singularities.
Borel also makes the vague, but on point, observation that the nature of 
the singularities of $F\odot G$ only depends on the nature of the 
singularities of $F$ and $G$, 

\medskip

\noindent\textit{``...la nature du point singulier $\a\b$ ne d\'epend que de la nature des points singuliers $\a$ et $\b$...''}\footnote{``... the nature of the singular 
point $\a\b$ depends only on the nature of the singular points $\a$ and $\b$...''} \\ (\'E. Borel, 1898) 

\medskip

The goal of this article is to make very precise this statement, by giving, in some natural situations, 
a new explicit formula for the monodromy of the singularities of  $F\odot G$ in terms of the 
monodromies of the singularities of $F$ and $G$.

\subsection{Holomorphic monodromy formulas.}

We need first some definitions and properties on monodromies. 
A singularity is \textit{isolated} if we can extend analytically the function in a small pointed neighborhood 
around the singularity by more than an angle of $2\pi$ without hitting other singularities.

\begin{definition}[Monodromy of an isolated singularity]
 
Let $F$ be an holomorphic function with an isolated singularity $\a\in \CC$. We denote 
$F_+$ the analytic continuation of $F$ when winding around $\a$ once in the positive 
orientation (in a neighborhood without any other singularity). The monodromy of $F$ 
at the point $\alpha\in \CC$ is
$$
\Delta_{\a} F =F_+(z)-F(z) \ .
$$
\end{definition}

\textbf{Example.} The simplest and more basic example of non-trivial monodromy is given by the logarithmic function, that 
we normalize properly, at the isolated singularity $\a=1$,
$$
\Delta_1 \left (\frac{1}{2\pi i} \log (z-1) \right ) =1
$$

\medskip

We consider in this article only holomorphic monodromies:

\begin{definition}[Holomorphic monodromy]
A function $F$ with an isolated singularity at $\a\in \CC$ has a holomorphic 
monodromy at $\a$ when $\Delta_{\a} F$ is holomorphic in a neighborhood of $\a$.
\end{definition}

Observe that if the monodromy is holomorphic then $\Delta_{\a}^2 F =0$. When $\Delta_{\a}^2 F =0$, if
$$
F_0=F-\frac{1}{2\pi i} \log(z-\a) \Delta_{\a} F 
$$
then  $\Delta_{\a} F_0 =0$, and $F_0$ has a uniform singularity at $\a$ (isolated without monodromy). 
We have proved:

\begin{proposition}\label{prop_decomposition}
When $\Delta_{\a}^2 F =0$, we can decompose $F$ uniquely as
$$
F=F_0+\frac{1}{2\pi i} \log(z-\a) \Delta_{\a} F
$$
where $F_0$ has a uniform singularity at $0$.
\end{proposition}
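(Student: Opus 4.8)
The plan is to split the statement into existence and uniqueness of the decomposition; both are short once we have the elementary calculus of the monodromy operator $\Delta_\a$ under sums and products. For existence, I would simply set $F_0 := F - \frac{1}{2\pi i}\log(z-\a)\,\Delta_\a F$, as suggested just before the statement, and check directly that $\Delta_\a F_0 = 0$. Two facts are needed. First, $\Delta_\a$ is additive on functions continued analytically along the same small loop around $\a$. Second, there is a Leibniz-type rule: writing $u_+ = u + \Delta_\a u$ and $v_+ = v + \Delta_\a v$ one has $(uv)_+ = u_+v_+$, hence $\Delta_\a(uv) = u\,\Delta_\a v + v\,\Delta_\a u + \Delta_\a u\,\Delta_\a v$. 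I would apply this with $u = \frac{1}{2\pi i}\log(z-\a)$, for which $u_+ = u+1$ so $\Delta_\a u = 1$, and $v = \Delta_\a F$, which is holomorphic near $\a$ by the holomorphic-monodromy hypothesis and therefore satisfies $\Delta_\a v = 0$. This gives $\Delta_\a\!\left(\frac{1}{2\pi i}\log(z-\a)\,\Delta_\a F\right) = \Delta_\a F$, so $\Delta_\a F_0 = \Delta_\a F - \Delta_\a F = 0$.

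Then I would observe that $\Delta_\a F_0 = 0$ means $(F_0)_+ = F_0$, i.e. one turn around $\a$ returns the same branch; since $F$ has an isolated singularity and the subtracted term is holomorphic on a punctured neighborhood of $\a$, $F_0$ is holomorphic and single-valued on a punctured neighborhood of $\a$, i.e. it has a uniform (isolated, monodromy-free) singularity at $\a$. This settles existence.

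For uniqueness, suppose $F = \widetilde F_0 + \frac{1}{2\pi i}\log(z-\a)\,H$ with $\widetilde F_0$ uniform at $\a$ and $H$ holomorphic near $\a$. Applying $\Delta_\a$ and using additivity together with the product rule exactly as above ($\Delta_\a \widetilde F_0 = 0$ and $\Delta_\a H = 0$) forces $\Delta_\a F = H$. Hence $H$ must equal $\Delta_\a F$, and then $\widetilde F_0 = F - \frac{1}{2\pi i}\log(z-\a)\,\Delta_\a F = F_0$, which is the asserted uniqueness.

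I do not expect a real obstacle here: everything reduces to the bookkeeping of $\Delta_\a$ on sums and products, plus the tautology "$\Delta_\a F_0 = 0$" $\iff$ "$F_0$ single-valued near $\a$", which is just the definition of monodromy together with the fact that a single small loop generates the fundamental group of a punctured disk. The only point deserving a line of care is that the product rule must be applied with all three factors $F$, $\log(z-\a)$ and $\Delta_\a F$ continued along the same loop in a neighborhood of $\a$ containing no other singularity; this causes no trouble because $\Delta_\a F$ is single-valued there.
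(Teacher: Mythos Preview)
Your proposal is correct and follows essentially the same approach as the paper: define $F_0$ by subtraction and verify $\Delta_\a F_0=0$ via the product rule $\Delta_\a(uv)=u\,\Delta_\a v+v\,\Delta_\a u+\Delta_\a u\,\Delta_\a v$ (which the paper states separately as a proposition), using $\Delta_\a\big(\tfrac{1}{2\pi i}\log(z-\a)\big)=1$. One small remark: you justify $\Delta_\a v=0$ by invoking the holomorphic-monodromy hypothesis, but the proposition's actual hypothesis is $\Delta_\a^2 F=0$, which gives $\Delta_\a v=\Delta_\a(\Delta_\a F)=0$ directly; your uniqueness argument is a useful addition that the paper asserts but does not spell out.
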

Note the minor abuse of notation since $F_0$ depends on the singularity $\a$. If needed we may use the notation 
$F_{\a, 0}$.

\begin{definition}[Totally holomorphic singularity]
The singularity $\a$ is totally holomorphic when both $\Delta_\a F$ and the germ $F_0$ are holomorphic at $\a$.
\end{definition}

Our main result computes the monodromy 
$\Delta_{\a\b} (F\odot G)$ from the monodromies $\Delta_{\a} F$ and $\Delta_{\b} G$ in the case of  
holomorphic singularities. We have a remarkable  explicit formula:

\begin{theorem}[Holomorphic monodromy formula for the Hadamard product]
\label{thm:Hadamard_holomorphic_monodromy}
We consider $F$ and $G$ holomorphic germs at $0$ with respective set of 
singularities $(\alpha)$ and $(\beta)$ in $\CC$. We 
assume that the singularities are isolated and holomorphic, that is, 
$\Delta_{\a} F$, resp.  $\Delta_{\b} G$, 
is holomorphic at $\a$, resp. $\b$.
Then the 
set of singularities of the principal branch of 
$F\odot G$ is contained in the product set $(\gamma)=(\alpha \beta)$ and is 
composed by isolated singularities which are holomorphic,
and we have the formula
\begin{align}\label{eq:general_monodromy_convolution}
\Delta_{\g} (F\odot G) (z)&= -\sum_{\substack{\a ,\b \\ \a\b=\g}} 
 \Res_{u=\a} \left ( \frac{F_0(u) \Delta_\b G(z/u)}{u} \right ) 
-\sum_{\substack{\a ,\b \\ \a\b=\g}} \Res_{u=\b} \left ( \frac{G_0(u) \Delta_\a F(z/u)}{u} \right )  \\ 
%\big (\Res_\a F_0 \big ) \, \Delta_\b G (z) + \big (\Res_\b G_0 \big ) \,  \Delta_\a F (z) 
& \ \ \ \, - \frac{1}{2\pi i}  \sum_{\substack{\a ,\b \\ \a\b=\g}}  
\int_\a^{z/\b} \Delta_{\a} F(u) \Delta_{\b} G (z/u) \, \frac{du}{u} \nonumber
\end{align}
When the singularities are totally holomorphic, we have the simpler formula
\begin{equation}\label{eq:monodromy_convolution}
\Delta_{\g} (F\odot G) (z)= -\frac{1}{2\pi i}\sum_{\substack{\a ,\b \\ \a\b=\g}} 
\int_\a^{z/\b} \Delta_{\a} F(u) \Delta_{\b} G (z/u) \, \frac{du}{u} 
\end{equation}
\end{theorem}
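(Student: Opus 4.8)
The plan is to start from the classical Hadamard integral representation of $F\odot G$ as a convolution on a suitable contour. Recall that if $F$ is holomorphic on a neighborhood of $0$ and $G$ likewise, then for $z$ near $0$ one has
\[
F\odot G(z)=\frac{1}{2\pi i}\int_{\g}F(u)\,G\!\left(\frac{z}{u}\right)\frac{du}{u},
\]
where $\g$ is a small positively oriented circle around $0$ inside the disc of convergence of $F$, chosen so that $z/u$ stays inside the disc of convergence of $G$. The monodromy $\Delta_{\g}(F\odot G)$ is then computed by continuing $z$ along a loop around $\g=\a\b$; the standard deformation argument (as in Hadamard's proof) shows that this continuation forces the contour of integration to be pinched/dragged past the singularities $u=\a$ of $F(u)$ and $u=z/\b$ of $G(z/u)$ for each factorization $\g=\a\b$. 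So the first step is to set up this deformation carefully and reduce the global monodromy to a sum, over pairs $(\a,\b)$ with $\a\b=\g$, of a local contribution coming from each such pinch.

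Next I would analyze a single local contribution. Fix a pair $(\a,\b)$ with $\a\b=\g$; near the relevant piece of contour we may use Proposition~\ref{prop_decomposition} to write $F=F_0+\frac{1}{2\pi i}\log(u-\a)\,\Delta_\a F$ and $G=G_0+\frac{1}{2\pi i}\log(v-\b)\,\Delta_\b G$ with $v=z/u$. Substituting into the convolution integral splits the local contribution into four pieces: (i) $F_0\cdot G_0$, which is locally holomorphic in both variables and contributes nothing to the monodromy; (ii) $F_0$ times the log term of $G$, whose monodromy-picking-up, after the contour is dragged around $u=z/\b$ i.e. $v=\b$, produces (by the residue theorem / Cauchy's formula applied to the jump of $\log(v-\b)$, which is $2\pi i$) precisely $-\Res_{u=\a}\!\big(F_0(u)\,\Delta_\b G(z/u)/u\big)$ — here the residue in $u$ at $\a$ appears because dragging $v$ past $\b$ is the same as dragging $u$ past $\a$; (iii) symmetrically the log term of $F$ times $G_0$, giving $-\Res_{u=\b}\!\big(G_0(u)\,\Delta_\a F(z/u)/u\big)$; and (iv) the product of the two log terms, $\frac{1}{(2\pi i)^2}\log(u-\a)\log(z/u-\b)\,\Delta_\a F(u)\,\Delta_\b G(z/u)$, whose monodromy contribution, after accounting for the $2\pi i$ jumps of both logarithms and the residual logarithmic integral that survives, yields the integral term $-\frac{1}{2\pi i}\int_\a^{z/\b}\Delta_\a F(u)\,\Delta_\b G(z/u)\,\frac{du}{u}$. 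Summing over all factorizations $\g=\a\b$ gives \eqref{eq:general_monodromy_convolution}. For the totally holomorphic case $F_0,G_0$ are holomorphic at $\a,\b$ respectively, so terms (ii) and (iii) have vanishing residues and only (iv) survives, giving \eqref{eq:monodromy_convolution}. One also has to check that $\Delta_\g(F\odot G)$ so computed is holomorphic near $\g$: the residue terms manifestly are, and the integral term is holomorphic in $z$ as long as the path from $\a$ to $z/\b$ avoids further singularities, which holds for $z$ near $\g$; this establishes the holomorphy-preservation claim. The containment of the singular set in $(\a\b)$ is exactly Hadamard's theorem, re-derived from the same contour argument.

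The main obstacle — the step I expect to require real care rather than routine computation — is item (iv) and, more generally, the precise bookkeeping of how the monodromy of $z$ translates into the motion of the pinching singularities of the $u$-integrand and how many times each logarithmic factor is wound around. One must be scrupulous about orientations and about the fact that the two logarithmic singularities $u=\a$ and $v=\b$ coincide in the $u$-plane (since $\a\cdot\b=\g$ forces $z/\b\to\a$ as $z\to\g$), so that a single loop of $z$ about $\g$ produces a genuinely intertwined monodromy rather than two independent jumps; disentangling this, and verifying that the surviving principal-value/logarithmic integral is exactly $\int_\a^{z/\b}$ along the image of the dragged contour (independent of the chosen path, by holomorphy of $\Delta_\a F\cdot\Delta_\b G$ away from the singularities), is the crux. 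Once the single-pair local model is pinned down, additivity over the factorizations $\g=\a\b$ and the passage to the totally holomorphic specialization are straightforward.
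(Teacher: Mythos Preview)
Your global strategy --- convolution formula plus contour deformation, with the monodromy localized to a sum over factorizations $\gamma=\alpha\beta$ --- is exactly the paper's. The local analysis, however, is organized differently, and your version has a gap in the justification of the residue terms.

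The paper does \emph{not} substitute the decomposition $F=F_0+\frac{1}{2\pi i}\log(u-\alpha)\Delta_\alpha F$ into the integrand and split into your four cross terms. Instead it works directly with the multivalued functions $F,G$ and tracks their branches $F,F_+,G,G_+$ on an explicit ``train-track'' decomposition of the contour difference $\hat\eta_{z_0}-\eta_{z_0}$: four oriented segments $\eta_1,\dots,\eta_4$ joining the two singular points $u=\alpha$ and $u=z_0/\beta$, each traversed twice with different branch assignments, plus two small shrinking loops around the singular points. On the segments the branch differences telescope to give the $\int_\alpha^{z/\beta}\Delta_\alpha F\,\Delta_\beta G\,\frac{du}{u}$ term directly. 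In the totally holomorphic case the shrinking loops contribute nothing (integrability), which proves \eqref{eq:monodromy_convolution} first. For the general case, the loop contributions do not vanish; the paper uses the log decomposition \emph{only at this stage}, observing that the $\log$ part is integrable so only the $F_0$ (resp.\ $G_0$) part survives on the loop around $\alpha$ (resp.\ $z_0/\beta$), and the four loop contributions combine pairwise to give the two residue terms.

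Your cross-term scheme reaches the correct target expressions, but the mechanism you describe for (ii) and (iii) is not right as written. In (ii) you say the jump of $\log(v-\beta)$ by $2\pi i$ ``produces precisely $-\Res_{u=\alpha}(F_0(u)\Delta_\beta G(z/u)/u)$'' because ``dragging $v$ past $\beta$ is the same as dragging $u$ past $\alpha$.'' But a $2\pi i$ jump of the log, by itself, yields an \emph{integral} of $F_0(u)\Delta_\beta G(z/u)/u$ over whatever piece of contour sees the jump, not a residue; to land on a residue at $\alpha$ you must also account for how the contour itself is dragged around $\alpha$ by the moving branch point $z/\beta$, and then argue that the resulting closed loop contribution reduces to that residue while the log-jump contribution on the remaining open arcs cancels against pieces you have placed in (iv). That bookkeeping is exactly what the paper's train-track decomposition does explicitly, and it is not visible in your sketch. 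A second issue: the decomposition of Proposition~\ref{prop_decomposition} is only valid in a neighborhood of the singularity, so substituting it into the convolution integral requires first localizing to the relevant arc of $\eta$ --- you gesture at this (``near the relevant piece of contour''), but the reduction is not carried out. If you want to pursue the cross-term route, you will need to make the contour geometry precise enough that each of (ii), (iii), (iv) is an honest computation rather than a pattern-match to the expected answer; the paper's branch-tracking argument avoids this by never splitting the integrand.
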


The convolution in our formula appears also in the work of J. \'Ecalle in his study of algebras 
of resurgent functions (see for example formula 2.1.8 p.19 of \cite{Ec} where he uses the convolution 
for germs modulo holomorphic functions). There is certainly a relation that we don't fully understand 
at this point with \'Ecalle's theories.

 Notice the exceptional situation at $z=0$: it can happen that the 
monodromies of $F$ and $G$ are both holomorphic at $z=0$, but the monodromy of the Hadamard product has 
a singularity at $z=0$ with a non-trivial 
monodromy.  This monodromy is generated by the term $1/u$ in the integrand of the monodromy convolution
formula (\ref{eq:general_monodromy_convolution}). 
This occurs for instance for polylogarithms (see section \ref{sec:polylogarithms}).

\medskip

We observe that when  $\Delta_{\a} F =\Delta_{\b} G =0$ for all singularities $\a$ and $\b$, 
then the singularities are all holomorphic, 
 $\Delta_{\g} (F\odot G)=0$. Therefore, Borel's Theorem is a direct Corollary of our formula.

\medskip

Borel also observes that when $F$ is a rational function, then one can construct a differential 
operator $D_F$ such that the the singularities $\a\b$ of $F\odot G$ are ``of the same nature'' as those
of $D_F G$ at $\b$. The holomorphic monodromy formula makes this explicit when 
$G$ has holomorphic singularities in the more general situation of a meromorphic function $F$.

\begin{corollary}\label{cor_diff_ring}
Let $F$ be a meromorphic function in $\CC$ (for example a rational function), 
holomorphic at $0$, with set of poles $(\a)$, and $G$ with totally holomorphic 
singularities $(\b)$. The monodromies of $F\odot G$ 
are in the differential ring generated by the $\Delta_\b G(z/\a)$, with field of constants generated by the 
coefficients of the polar parts of $F$. 
More precisely, consider the polar part of $F$ 
at each pole $\a$
$$
\sum_{k=1}^d \frac{a_{k,\a}}{(u-\a)^k} 
$$
then we have
$$
\Delta_\g (F\odot G) =-\sum_{\substack{\a ,\b , k\\ \a\b=\g}} \frac{ a_{k,\a}}{ (k-1)!} 
 \left [\frac{d^{k-1}}{du^k}  \left (\frac{\Delta_\beta G (z/u)}{u} \right )\right ]_{u=\a} \ \ .
$$
\end{corollary}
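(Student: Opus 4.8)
The plan is to deduce this Corollary directly from the holomorphic monodromy formula \eqref{eq:general_monodromy_convolution} of Theorem \ref{thm:Hadamard_holomorphic_monodromy}, specialized to the situation where $F$ is meromorphic with no monodromy and $G$ has totally holomorphic singularities. First I would observe that since $F$ is meromorphic in $\CC$ it is a uniform (single-valued) function, so $\Delta_\a F = 0$ at every pole $\a$; consequently $F_0 = F$ in the decomposition of Proposition \ref{prop_decomposition} at each pole. Likewise, since $G$ has \emph{totally} holomorphic singularities, the germ $G_0$ is holomorphic at each $\b$, so $G_0(u)$ has no pole at $u = \b$ and $\Res_{u=\b}\bigl(G_0(u)\,\Delta_\a F(z/u)/u\bigr) = 0$ — indeed, that whole second sum in \eqref{eq:general_monodromy_convolution} vanishes term by term because both $\Delta_\a F = 0$ and $G_0$ is holomorphic. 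The third (double-integral) sum in \eqref{eq:general_monodromy_convolution} also vanishes because each integrand contains the factor $\Delta_\a F(u) \equiv 0$. Hence only the first sum survives, and
$$
\Delta_{\g}(F\odot G)(z) = -\sum_{\substack{\a,\b\\ \a\b=\g}} \Res_{u=\a}\left(\frac{F(u)\,\Delta_\b G(z/u)}{u}\right).
$$

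Next I would compute this residue explicitly. Near a pole $\a$ of order $d$, write $F(u) = \sum_{k=1}^{d} a_{k,\a}(u-\a)^{-k} + (\text{holomorphic})$; the holomorphic part contributes nothing to the residue. For each $k$, the residue of $(u-\a)^{-k}\,h(u)$ at $u=\a$, where $h(u) = \Delta_\b G(z/u)/u$ is holomorphic at $u=\a$ (using that $\a \neq 0$, which holds since $F$ is holomorphic at $0$, and that $\Delta_\b G$ is holomorphic at its singularity), equals $\frac{1}{(k-1)!} h^{(k-1)}(\a)$ by the standard formula for residues at a higher-order pole. Summing over $k$ and over the decompositions $\a\b = \g$ yields exactly the claimed formula
$$
\Delta_\g(F\odot G) = -\sum_{\substack{\a,\b,k\\ \a\b=\g}} \frac{a_{k,\a}}{(k-1)!}\left[\frac{d^{k-1}}{du^{k-1}}\left(\frac{\Delta_\b G(z/u)}{u}\right)\right]_{u=\a}.
$$
(I would double-check that the exponent on $du$ in the displayed statement is a typographical matter — it should read $du^{k-1}$ — since the residue formula for an order-$k$ pole involves the $(k-1)$-st derivative.)

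Finally, for the structural assertion, I would note that the right-hand side is, for each fixed $\g$, a finite $\CC$-linear combination — with coefficients built as polynomials in the $a_{k,\a}$, i.e. lying in the field generated by the coefficients of the polar parts of $F$ — of iterated $u$-derivatives at $u=\a$ of expressions rational in $u$ times $\Delta_\b G(z/u)$. Since differentiation in $u$ of $\Delta_\b G(z/u)$ produces, by the chain rule, $-\,(z/u^2)\,(\Delta_\b G)'(z/u)$ and more generally derivatives of $\Delta_\b G$ evaluated at $z/u$ multiplied by rational functions of $u$, every term obtained lies in the differential ring generated over that field of constants by the functions $\g \mapsto \Delta_\b G(z/\a)$ (here one uses that a monodromy which is holomorphic is in particular sent by $\Delta$ to something killed by a further $\Delta$, so no new branching appears, and that $d/dz$ commutes with all the operations performed). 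Evaluating the $u$-derivatives at $u = \a$ and then viewing the result as a function of $z$, one checks $d/dz$ acting on $\Delta_\b G(z/\a)$ stays inside this ring; iterating, the whole expression does. I do not expect a genuine obstacle here: the only points requiring care are the bookkeeping of which variable ($u$ versus $z$) the derivatives act on, and confirming that "totally holomorphic" for $G$ is exactly what is needed to kill the two unwanted sums in \eqref{eq:general_monodromy_convolution} — which it is, essentially by the definition of totally holomorphic singularity combined with $F$ being single-valued.
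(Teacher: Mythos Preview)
Your proposal is correct and follows the same approach that the paper implicitly intends: specialize the general monodromy formula~\eqref{eq:general_monodromy_convolution} to the case where $F$ is single-valued (so $\Delta_\a F=0$ and $F_0=F$), observe that the second and third sums vanish, and then compute the surviving residues via the polar part of $F$ and the standard higher-order residue formula. The paper does not spell out a proof of this Corollary, but the worked example with the Koebe function immediately following it carries out exactly this residue computation, confirming that your route is the intended one; your observation about the typo $du^k$ versus $du^{k-1}$ is also correct.
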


\textbf{Example.}
A simple example occurs when we take $F=-K_0$, where $K_0(z)=z/(1-z)^2$ is the Koebe function 
which has a simple pole of order $2$ at $z=1$ and polar part
$$
-K_0(z)=-\frac{1}{z-1}-\frac{1}{(z-1)^2} \ .
$$
Then we compute
\begin{align*}
\Delta_\b (-K_0\odot G) &=-\Res_{u=1} \left (\left (-\frac{1}{u-1}-\frac{1}{(u-1)^2}\right ) \frac{\Delta_\b G(z/u)}{u}  \right ) \\ 
& = [\Delta_\b G(z/u)/u]_{u=1}   + \left [\frac{\Delta_\b G'(z/u)}{u} -
\frac{\Delta_\b G(z/u)}{u^2}\right ]_{u=1} \\
&= \Delta_\b G(z) + \Delta_\b G'(z) -\Delta_\b G(z) \\
&=\Delta_\b G'(z)\\
&=\left (\Delta_\b G\right )'(z)
\end{align*}

\begin{proposition}
The monodromies of the singularities of the 
Hadamard product with the negative of the Koebe function are the derivatives of the monodromies.
\end{proposition}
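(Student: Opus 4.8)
The plan is to derive this as an immediate specialization of Corollary~\ref{cor_diff_ring} to the rational function $F=-K_0$. First I would check the hypotheses: $-K_0$ is rational, holomorphic at $0$, and its only pole in $\CC$ is at $\a=1$, where it has order $d=2$. To apply the corollary I need the polar part there; substituting $z=(z-1)+1$ in $-K_0(z)=-z/(z-1)^2$ gives
\[
-K_0(z)=-\frac{1}{z-1}-\frac{1}{(z-1)^2},
\]
so, in the notation of Corollary~\ref{cor_diff_ring}, $a_{1,1}=a_{2,1}=-1$ and there are no other poles. Since the only $\a$ is $1$, the product set is $(\g)=(\a\b)=(\b)$: each totally holomorphic singularity $\b$ of $G$ produces exactly one singularity $\g=\b$ of $-K_0\odot G$.

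Next I would substitute these data into the formula of Corollary~\ref{cor_diff_ring}. With a single pole of order two it collapses to one residue,
\[
\Delta_\b(-K_0\odot G)(z)=-\Res_{u=1}\left(\left(-\frac{1}{u-1}-\frac{1}{(u-1)^2}\right)\frac{\Delta_\b G(z/u)}{u}\right).
\]
The contribution of the simple pole is the value of $\Delta_\b G(z/u)/u$ at $u=1$, namely $\Delta_\b G(z)$, while the double pole contributes the first $u$-derivative of $\Delta_\b G(z/u)/u$ evaluated at $u=1$. Carrying out the differentiation by the product and chain rules and then setting $u=1$, the two occurrences of $\Delta_\b G(z)$ cancel and one is left with $(\Delta_\b G)'(z)$. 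This is exactly the computation displayed in the Example preceding the statement, and it proves the proposition.

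I do not expect any genuine obstacle: the result is a one-line corollary of Corollary~\ref{cor_diff_ring} (equivalently of the general formula~\eqref{eq:general_monodromy_convolution} in Theorem~\ref{thm:Hadamard_holomorphic_monodromy}, in which only the $F_0=F=-K_0$ residue term survives, since $\Delta_\a(-K_0)=0$). The only point requiring care is to differentiate $\Delta_\b G(z/u)$ with respect to $u$ \emph{before} restricting to $u=1$, not after. As a consistency check one may also argue directly: on the principal branch $-K_0\odot G$ equals $z\,G'$ up to sign, and since the monodromy operator $\Delta_\b$ commutes with multiplication by the single-valued factor $z$ and with $d/dz$, the monodromy of $-K_0\odot G$ at $\b$ must be the corresponding derivative of $\Delta_\b G$.
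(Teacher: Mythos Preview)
Your proposal is correct and follows exactly the same route as the paper: specialize Corollary~\ref{cor_diff_ring} (equivalently, the residue term in formula~\eqref{eq:general_monodromy_convolution}) to $F=-K_0$, read off the polar part at $u=1$, and evaluate the two residue contributions---this is precisely the computation carried out in the Example immediately preceding the Proposition, which you rightly identify as the proof. Your closing consistency check via $-K_0\odot G=\pm z\,G'$ together with commutation of $\Delta_\b$ with $d/dz$ is a pleasant extra but not required.
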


\medskip

We have a similar formula for the monodromies of the exponential e\~ne product.

\begin{theorem}[Holomorphic monodromy formula for the exponential e\~ne product]\label{thm:holomorphic}
We consider $F$ and $G$ holomorphic germs at $0$ with respective sets of singularities $(\alpha)$ and $(\beta)$. We 
assume that the singularities are isolated and holomorphic. Then the set of singularities of 
the principal branch of $F\star_e G$ is contained in the product set
$(\gamma)=(\alpha \beta)$ and is 
composed by isolated singularities with holomorphic monodromies, and we have
\begin{align}\label{eq:ene_monodromy_convolution}
\Delta_{\g} (F\star_e G) (z)&= \sum_{\substack{\a ,\b \\ \a\b=\g}} 
 \Res_{u=\a} \left ( F'_0(u) \Delta_\b G(z/u) \right ) 
+\sum_{\substack{\a ,\b \\ \a\b=\g}} \Res_{u=\b} \left ( G_0(u) \Delta_\a F'(z/u) \right )  \\
& \ \ \ \, + \frac{1}{2\pi i} \sum_{\substack{\a ,\b \\ \a\b=\g}}  \Delta_\a F(\a) \Delta_\b G(z/ \a)  
+ \frac{1}{2\pi i}  \sum_{\substack{\a ,\b \\ \a\b=\g}}  
\int_\a^{z/\b} \Delta_{\a} F'(u) \Delta_\b G (z/u) \, du \nonumber
\end{align}
When the singularities are totally holomorphic, we have the simpler formula
\begin{equation}\label{eq:ene_tot_hol_monodromy_convolution}
\Delta_{\g} (F\star_e G) (z)= \frac{1}{2\pi i} \sum_{\substack{\a ,\b \\ \a\b=\g}}  \Delta_\a F(\a) \Delta_\b G(z/ \a)  
+ \frac{1}{2\pi i}  \sum_{\substack{\a ,\b \\ \a\b=\g}}  
\int_\a^{z/\b} \Delta_{\a} F'(u) \Delta_\b G (z/u) \, du 
\end{equation}

\end{theorem}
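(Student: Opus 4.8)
The plan is to reduce everything to the Hadamard monodromy formula of Theorem~\ref{thm:Hadamard_holomorphic_monodromy}. Comparing Taylor coefficients one has the identity
$$
F\star_e G = (-zF')\odot G = F\odot(-zG') = -z\,(F\odot G)'(z),
$$
which is the relation $F\star_e G=-K_0\odot F\odot G$ with the Koebe function recalled in the introduction: Hadamard multiplication by $\pm$ the Koebe function is the operator $H\mapsto\pm zH'$. Since multiplication by the single-valued $z$ and the derivative $\partial_z$ both commute with the monodromy operator $\Delta_\g$ (analytic continuation commutes with $\partial_z$, and $z$ returns to itself after a loop), one gets at once
$$
\Delta_\g(F\star_e G)(z)=-z\,\partial_z\,\Delta_\g(F\odot G)(z),
$$
and equivalently one may simply apply Theorem~\ref{thm:Hadamard_holomorphic_monodromy} to $F\star_e G=\tilde F\odot G$ with $\tilde F(u):=-uF'(u)$ in place of $F$.

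First I would settle the singularity bookkeeping. The germ $\tilde F=-zF'$ is holomorphic at $0$ and has the same isolated singularity set $(\a)$ as $F$. Writing $F=F_0+\tfrac1{2\pi i}\log(u-\a)\,\Delta_\a F$ as in Proposition~\ref{prop_decomposition} and differentiating, the coefficient of $\tfrac1{2\pi i}\log(u-\a)$ in $\tilde F$ is $-u\,(\Delta_\a F)'(u)$, so $\Delta_\a\tilde F(u)=-u\,(\Delta_\a F)'(u)$ is holomorphic at $\a$; hence $\tilde F$ has holomorphic singularities, and Theorem~\ref{thm:Hadamard_holomorphic_monodromy} applies, giving already that the singularities of $F\star_e G$ are isolated, holomorphic, and contained in $(\g)=(\a\b)$. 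On the other hand the uniform part of $\tilde F$ at $\a$ is
$$
\tilde F_0(u)=-uF_0'(u)-\frac{u}{2\pi i}\,\frac{\Delta_\a F(u)}{u-\a},
$$
which in general has a simple pole at $\a$ with residue $-\tfrac{\a}{2\pi i}\Delta_\a F(\a)$; so $\tilde F$ need not be totally holomorphic even when $F$ is, which is exactly why an extra term appears in \eqref{eq:ene_monodromy_convolution} beyond the analogues of the Hadamard residue terms.

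Next I would substitute $\tilde F_0$, $\Delta_\a\tilde F(z/u)=-\tfrac zu(\Delta_\a F)'(z/u)$ and $\Delta_\b G$ into \eqref{eq:general_monodromy_convolution} applied to $\tilde F\odot G$, and simplify. The residue at $u=\a$ splits into a $\Res_{u=\a}\!\big(F_0'(u)\Delta_\b G(z/u)\big)$ term and, from the simple-pole piece of $\tilde F_0$, the new boundary term $\tfrac1{2\pi i}\Delta_\a F(\a)\Delta_\b G(z/\a)$; the remaining residue, together with the convolution integral, is reorganised using $\tfrac{z}{u^2}(\Delta G)'(z/u)=-\partial_u\big(\Delta G(z/u)\big)$, integration by parts inside the residue and inside the integral, and the change of variable $u\mapsto z/u$ to normalise the limits of integration, producing the remaining terms of \eqref{eq:ene_monodromy_convolution}. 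Setting $F_0\equiv G_0\equiv 0$ (holomorphic at the relevant points, so all residues vanish) collapses this to the totally holomorphic formula \eqref{eq:ene_tot_hol_monodromy_convolution}. Finally I would record why the e\~ne monodromy is ``better behaved'': the integrand is now $\Delta_\a F'(u)\,\Delta_\b G(z/u)\,du$ with no factor $1/u$ — equivalently, $-z\partial_z$ turns the logarithmic term $c\log z$ that can force a singularity of $\Delta_\g(F\odot G)$ at $z=0$ into the constant $-c$ — so no singularity is created at the origin.

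The main obstacle I anticipate is purely organisational: keeping straight the several boundary and residue terms generated by decomposing $\tilde F=\tilde F_0+\tfrac1{2\pi i}\log(u-\a)\Delta_\a\tilde F$ and by the differentiations, and matching them one by one with \eqref{eq:ene_monodromy_convolution} after the integrations by parts and the substitution $u\mapsto z/u$ (which also reconciles the apparent $F\leftrightarrow G$ asymmetry of the stated formula with the commutativity of $\star_e$). Alternatively one can bypass Theorem~\ref{thm:Hadamard_holomorphic_monodromy} and argue directly from the integral representation $F\star_e G(z)=-\tfrac1{2\pi i}\oint_\Gamma F'(u)\,G(z/u)\,du$, deforming the contour $\Gamma$ as $z$ winds once around $\g=\a\b$ — the moving singularity $u=z/\b$ sweeping around the fixed singularity $u=\a$ along the ``choreographic'' contour — and collecting the residue of the pole part of $F'$ at $\a$, the residue and jump contributions at $u=z/\b$, and a cut integral between $\a$ and $z/\b$; the computation, and its difficulty, are essentially the same.
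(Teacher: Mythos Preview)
Your approach is correct and essentially coincides with the paper's. The paper's primary argument is your ``alternative'': it reruns the contour-deformation proof of Section~\ref{sec:proof} starting from the e\~ne convolution formula $-\tfrac{1}{2\pi i}\int_\eta F'(u)G(z/u)\,du$, obtaining residue terms in $(F')_0$, and then invokes exactly the decomposition you found, stated there as the lemma $(F')_0=F_0'+\tfrac{1}{2\pi i}\,\tfrac{1}{z-\a}\,\Delta_\a F$, to split off the extra $\tfrac{1}{2\pi i}\Delta_\a F(\a)\Delta_\b G(z/\a)$ term. Conversely, the paper closes by noting as an alternative precisely your primary route, via $F\star_e G=-K_0\odot F\odot G$ and the already-established Hadamard formula. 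So the two presentations differ only in which of the two equivalent reductions is foregrounded; the substantive observation --- that the uniform part of $F'$ (equivalently of $\tilde F=-zF'$) acquires a simple pole with residue proportional to $\Delta_\a F(\a)$, and that this pole is what generates the new boundary term --- is the same in both.
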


Notice this time the absence of the factor $1/u$ in the integral of the monodromy convolution formula 
(\ref{eq:ene_monodromy_convolution}) for the exponential e\~ne product generates no extra singularities 
at $z=0$ for non-principal branches of $F\star_e G$. This explains why the monodromies 
of singularities of the 
exponential e\~ne product have a better analytic behavior than those for the Hadamard product. 
The symmetry of the formula on $F$ and $G$ is clear in the first line, and for the second line it 
follows by integration by parts using basic 
properties of the monodromy operator $\Delta_\a$ (see Proposition \ref{prop:symmetry_ene}).

For the exponential e\~ne product we have the same Borel's type of Theorem. It follows from formula 
(\ref{eq:ene_monodromy_convolution}) that if $\Delta_\a F=\Delta_\b G =0$ then $\Delta_\g (F\star_e G)=0$, hence we have:
\begin{corollary}
 If $F$ and $G$ have only uniform singularities
then $F\star_e G$ has only uniform singularities, i.e. 
$\Delta_{\g} (F\star_e G)=0$.
\end{corollary}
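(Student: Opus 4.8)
The plan is to deduce this Corollary directly from Theorem~\ref{thm:holomorphic} (the Holomorphic monodromy formula for the exponential e\~ne product). First I would observe that a uniform singularity is, by definition, an isolated singularity without monodromy, so the hypothesis $\Delta_\a F = 0$ for every singularity $\a$ of $F$ and $\Delta_\b G = 0$ for every singularity $\b$ of $G$ is precisely the assertion that $F$ and $G$ have only uniform singularities; in particular these singularities are trivially holomorphic (the zero function is holomorphic), so Theorem~\ref{thm:holomorphic} applies and tells us that the singularities of $F\star_e G$ are isolated with holomorphic monodromies, located in the product set $(\g)=(\a\b)$.

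Next I would examine the right-hand side of formula~(\ref{eq:ene_monodromy_convolution}) term by term. The second and fourth sums contain the factor $\Delta_\a F$ (in the form $\Delta_\a F'$, which vanishes since $\Delta_\a F \equiv 0$ implies $(\Delta_\a F)' = \Delta_\a F' \equiv 0$), hence vanish identically. The third sum contains the factor $\Delta_\a F(\a)$, again zero. The first sum is $\sum \Res_{u=\a}(F_0'(u)\,\Delta_\b G(z/u))$, which contains the factor $\Delta_\b G \equiv 0$ and therefore also vanishes. Thus every term on the right-hand side is zero, giving $\Delta_\g(F\star_e G)(z) = 0$ for every $\g$ in the product set, and since by the Theorem there are no singularities outside this set, $F\star_e G$ has only uniform singularities.

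There is essentially no obstacle here: the Corollary is an immediate specialization of the main Theorem, and the only point requiring a word of care is the bookkeeping that \emph{every} term of~(\ref{eq:ene_monodromy_convolution}) carries at least one factor of the form $\Delta_\a F$ or $\Delta_\b G$ — which is manifest once one notes that $\Delta_\a F' = (\Delta_\a F)'$, a fact already used in the statement of the Theorem and justified by the basic properties of the monodromy operator (commutation of $\Delta_\a$ with $d/dz$). One may alternatively invoke the simpler totally-holomorphic formula~(\ref{eq:ene_tot_hol_monodromy_convolution}): when $\Delta_\a F = 0$ the singularities of $F$ are automatically totally holomorphic (both $\Delta_\a F$ and $F_0 = F$ are being treated in the holomorphic setting), and both remaining terms visibly vanish, again yielding $\Delta_\g(F\star_e G) = 0$.
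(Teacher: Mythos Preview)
Your main argument is correct and is exactly the paper's approach: the paper simply remarks (in the sentence immediately preceding the Corollary) that from formula~(\ref{eq:ene_monodromy_convolution}), $\Delta_\a F=\Delta_\b G=0$ forces $\Delta_\g(F\star_e G)=0$, and you have written out the term-by-term verification of this.

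One small correction to your closing alternative: when $\Delta_\a F=0$ the singularity $\a$ is \emph{not} automatically totally holomorphic. Indeed $F_0=F$ in that case, and $F$ still has a genuine (uniform) singularity at $\a$, so $F_0$ need not be holomorphic there; hence formula~(\ref{eq:ene_tot_hol_monodromy_convolution}) is not available in general. This does not affect your proof, since your primary argument via~(\ref{eq:ene_monodromy_convolution}) already settles the matter.
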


We have also an analogue of Corollary \ref{cor_diff_ring}. The result is stronger because of the 
better analytic properties of the e\~ne product.

\begin{corollary}\label{cor:ene_improved_borel}
Let $F$ be a function in $\CC$ with a discrete set $(\a)$ of singularities with constant monodromies, such that 
the germs $F_0$ are meromorphic, $F$ is
holomorphic at $0$, and $G$ with totally holomorphic 
singularities $(\b)$. The monodromies of $F\star_e G$ 
are in the differential ring generated by the $\Delta_\b G(z/\a)$, with field of constants generated by the 
coefficients of the polar parts of $F$ and the constants $\left (\frac{\Delta_\a F}{2\pi i} \right )$. 
More precisely, consider the polar part of each $F_0$ 
at each pole $\a$
$$
\sum_{k=1}^d \frac{a_{k,\a}}{(u-\a)^k} \ ,
$$
then we have
$$
\Delta_\g (F\odot G) =-\sum_{\substack{\a ,\b , k\\ \a\b=\g}} \frac{a_{k,\a}}{(k-1)!} 
 \left [\frac{d^{k}}{du^k}  \left (\Delta_\beta G (z/u) \right )\right ]_{u=\a}
 +\sum_{\substack{\a ,\b \\ \a\b=\g}}  \frac{\Delta_\a F(\a)}{2\pi i} \, \Delta_\b G(z/ \a) \ \ .
$$
\end{corollary}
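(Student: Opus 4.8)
The plan is to specialize the holomorphic monodromy formula (\ref{eq:ene_monodromy_convolution}) of Theorem~\ref{thm:holomorphic} to the present hypotheses, compute the surviving residue, and then read off the ring-theoretic statement. First I would check that Theorem~\ref{thm:holomorphic} applies: each constant monodromy $\Delta_\a F$ is in particular holomorphic at $\a$, so $F$ has isolated holomorphic (though not, in general, totally holomorphic, since $F_0$ is only meromorphic) singularities, $G$ has totally holomorphic singularities, and $F$ is holomorphic at $0$; hence $F\star_e G$ is a germ at $0$ whose singularities lie in $(\g)=(\a\b)$ and are given, branch by branch, by (\ref{eq:ene_monodromy_convolution}), the sum over $\a\b=\g$ being locally finite by Theorem~\ref{thm:holomorphic}.

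Next I would simplify the four terms of (\ref{eq:ene_monodromy_convolution}). Since $\Delta_\a F$ is constant and the monodromy operator commutes with differentiation, the quantity $\Delta_\a F'$ appearing in (\ref{eq:ene_monodromy_convolution}) equals $(\Delta_\a F)'=0$, so both the residue sum $\sum\Res_{u=\b}(G_0(u)\,\Delta_\a F'(z/u))$ and the integral $\sum\int_\a^{z/\b}\Delta_\a F'(u)\,\Delta_\b G(z/u)\,du$ vanish, leaving
\[
\Delta_\g(F\star_e G)(z)=\sum_{\substack{\a,\b\\ \a\b=\g}}\Res_{u=\a}\bigl(F_0'(u)\,\Delta_\b G(z/u)\bigr)+\frac{1}{2\pi i}\sum_{\substack{\a,\b\\ \a\b=\g}}\Delta_\a F(\a)\,\Delta_\b G(z/\a),
\]
in which $\Delta_\a F(\a)=\Delta_\a F$ is the constant, producing the second sum in the statement. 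For the residue I would differentiate the polar expansion: from $F_0(u)=\sum_{k=1}^d a_{k,\a}(u-\a)^{-k}+(\text{holomorphic})$ one gets $F_0'(u)=-\sum_{k=1}^d k\,a_{k,\a}(u-\a)^{-k-1}+(\text{holomorphic})$, and since $\Delta_\b G(z/u)$ is holomorphic in $u$ near $u=\a$ (a germ identity at $z=\g=\a\b$, where $z/\a$ is near $\b$), the residue-of-a-product formula yields
\begin{align*}
\Res_{u=\a}\bigl(F_0'(u)\,\Delta_\b G(z/u)\bigr)
&=-\sum_{k=1}^d\frac{k\,a_{k,\a}}{k!}\left[\frac{d^{k}}{du^{k}}\Delta_\b G(z/u)\right]_{u=\a}\\
&=-\sum_{k=1}^d\frac{a_{k,\a}}{(k-1)!}\left[\frac{d^{k}}{du^{k}}\Delta_\b G(z/u)\right]_{u=\a}.
\end{align*}
Summing over $\a\b=\g$ and combining with the previous display gives the announced formula for $\Delta_\g(F\star_e G)$.

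For the differential-ring statement I would convert the $u$-derivatives at $u=\a$ into $z$-derivatives of the basic functions $z\mapsto\Delta_\b G(z/\a)$. Writing $\Phi(z,u)=\Delta_\b G(z/u)$ (a function of $u$ with parameter $z$), one checks by induction on $k$ that $\partial_u^{k}\Phi=u^{-k}H_k(z/u)$ with $H_{k+1}(w)=-kH_k(w)-wH_k'(w)$ and $H_1(w)=-w(\Delta_\b G)'(w)$, so each $H_k$ is a fixed $\ZZ$-linear combination of the $w^{m}(\Delta_\b G)^{(m)}(w)$, $1\le m\le k$; evaluating at $u=\a$ and using $(\Delta_\b G)^{(m)}(z/\a)=\a^{m}\,\frac{d^{m}}{dz^{m}}\Delta_\b G(z/\a)$ shows that $\bigl[\partial_u^{k}\Phi\bigr]_{u=\a}$ is a fixed polynomial in $z$ and $\a^{\pm1}$ applied to the $z$-derivatives of $\Delta_\b G(z/\a)$. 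Hence each term of the formula, weighted by its scalar $a_{k,\a}/(k-1)!$ or $\Delta_\a F/(2\pi i)$, lies in the differential ring generated by the $\Delta_\b G(z/\a)$ over the field generated by the $a_{k,\a}$ and the $\Delta_\a F/(2\pi i)$, which is the claim.

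The step I expect to be the main obstacle is not the analysis but making the ``differential ring/field of constants'' assertion precise: one must fix the ambient ring (which should be taken to contain the rational functions of $z$, exactly as already in Corollary~\ref{cor_diff_ring}), track precisely which scalars are admitted as constants, and verify that the chain-rule expansion of $\bigl[\partial_u^{k}\Phi\bigr]_{u=\a}$ introduces nothing outside that ring; by contrast the two analytic inputs used --- that $\Delta_\b G(z/u)$ is holomorphic near $u=\a$ as a germ at $z=\g$, and that $\Delta_\a$ commutes with differentiation --- are routine given Theorem~\ref{thm:holomorphic} and the basic properties of $\Delta_\a$ recorded earlier.
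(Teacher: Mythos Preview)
Your proposal is correct and follows exactly the approach the paper indicates: the paper simply states that the proof ``is straightforward from the e\~ne monodromy formula,'' and you carry this out by specializing (\ref{eq:ene_monodromy_convolution}), using $\Delta_\a F'=(\Delta_\a F)'=0$ to kill the second residue sum and the integral, and then computing the surviving residue from the polar part of $F_0'$. Your additional care with the differential-ring statement (converting $u$-derivatives at $u=\a$ into $z$-derivatives of $\Delta_\b G(z/\a)$ via the chain rule) goes beyond what the paper spells out but is the natural elaboration, and your noted caveat about the ambient ring containing rational functions of $z$ applies equally to the paper's Corollary~\ref{cor_diff_ring}.
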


A Corollary of the algebraic nature of these  
monodromy formulas is the invariance by the Hadamard and e\~ne product of some 
natural rings of functions. Consider a field $K\subset \CC$ with $2\pi i \in K$, and consider the ring, for the usual addition and multiplication,  
$PML(K)$ (Polynomial Logarithmic Monodromy ring with coefficients in $K$)
of germs holomorphic at $0$ with only isolated singularities located at points in $K$, 
and  with monodromy in $K[z,\log z]$. 

\begin{corollary}
 The $PLM(K)$ ring is closed under Hadamard, resp. e\~ne, product,  and is the 
 minimal Hadamard ring, resp. e\~ne ring, containing the class of 
 functions with polynomial monodromies in $K[z]$ with isolated singularities located at
 points of $K$.
\end{corollary}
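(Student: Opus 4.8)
The plan is to show two things about $PLM(K)$: first, that it is a ring under Hadamard (resp.\ e\~ne) product, and second, that it is generated as such a ring by the functions with polynomial monodromies in $K[z]$ and singularities at points of $K$. The key inputs are the explicit monodromy formulas of Theorems~\ref{thm:Hadamard_holomorphic_monodromy} and~\ref{thm:holomorphic} together with the decomposition of Proposition~\ref{prop_decomposition}.

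For closure under Hadamard product, let $F,G\in PLM(K)$. First I would argue that it suffices to control the monodromies $\Delta_\g(F\odot G)$ at each $\g=\a\b$, since the location of singularities is in $(\a\b)\subset K$ (products of elements of a field lie in the field, and $\a\b=0$ only if one factor is $0$, handled separately as the possible extra singularity at $z=0$ noted after the theorem). Now fix $\g$. In the general formula~\eqref{eq:general_monodromy_convolution} the monodromy $\Delta_\a F\in K[z,\log z]$ is in fact holomorphic by hypothesis (holomorphic monodromy), hence $\Delta_\a F\in K[z]$; similarly $\Delta_\b G\in K[z]$, so $F_0,G_0$ are the corresponding uniform germs. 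The first two sums are residues at $\a,\b\in K$ of rational expressions in $u$ with coefficients built from $F_0,G_0,\Delta_\b G,\Delta_\a F$ evaluated at $z/u$: these residues are finite $K$-linear combinations (with powers of $1/\a,1/\b$, still in $K$) of derivatives of polynomials in $z$ times powers of $z$, hence lie in $K[z]$. Wait --- I need to be careful: $F_0$ need not be polynomial. But for the \emph{monodromy} computation only the principal parts of the relevant residues matter, and more importantly the residue of $F_0(u)\Delta_\b G(z/u)/u$ at $u=\a$ is governed by the behaviour of $F_0$ near $\a$; since $\a$ is an isolated singularity of $F$ and $\Delta_\a F$ holomorphic, $F_0$ has a uniform singularity there, and the contribution to the monodromy of the Hadamard product is precisely the local obstruction, which the formula packages as this residue. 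The honest route is: the integral term $\int_\a^{z/\b}\Delta_\a F(u)\Delta_\b G(z/u)\,du/u$ with $\Delta_\a F,\Delta_\b G\in K[z]$ expands, by $\int u^{a-1}(z/u)^{b}\,du = \int z^b u^{a-b-1}\,du$, into $K[z,\log z]$ (the $\log$ appearing exactly when $a=b$, i.e.\ from the term matching powers --- this is the source of the $\log z$ and why we need $K[z,\log z]$ rather than $K[z]$), and the evaluation at the endpoints $\a$ and $z/\b$ keeps us in $K[z,\log z]$. The residue terms, by the explicit Corollary~\ref{cor_diff_ring} formula, are $K$-linear combinations of $[d^{k-1}/du^k(\Delta_\b G(z/u)/u)]_{u=\a}$, which are in $K[z]$ since $\Delta_\b G\in K[z]$ and $\a\in K$. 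Summing finitely many such terms over $\a\b=\g$ gives $\Delta_\g(F\odot G)\in K[z,\log z]$. The exceptional singularity at $z=0$ (e.g.\ for polylogarithms) also has monodromy of this shape, from the $1/u$ term producing a $\log z$. The same argument with Theorem~\ref{thm:holomorphic} and Corollary~\ref{cor:ene_improved_borel} handles the e\~ne product, where the absence of $1/u$ means one actually stays cleaner, but $K[z,\log z]$ still suffices.

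For the second assertion, let $\cR$ denote the class of germs holomorphic at $0$ with isolated singularities at points of $K$ and polynomial monodromies in $K[z]$. Clearly $\cR\subset PLM(K)$, and $PLM(K)$ is a ring under the relevant product by the above; I must show $PLM(K)$ is the \emph{smallest} such ring containing $\cR$. Let $F\in PLM(K)$; I want to write $F$ in terms of elements of $\cR$ using ordinary addition/multiplication and the Hadamard (resp.\ e\~ne) product. The idea is Proposition~\ref{prop_decomposition}: $F = F_0 + \frac{1}{2\pi i}\log(z-\a)\Delta_\a F$ at each singularity $\a$, and $\Delta_\a F\in K[z]$, $\frac{1}{2\pi i}\in K$. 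For $F$ with finitely many singularities one peels them off one at a time, reducing the monodromy data; the pieces $\log(z-\a)$ have polynomial (indeed constant) monodromy at $\a$ with value in $K$, so $\log(z-\a)\in\cR$ after normalizing (the function $\frac{1}{2\pi i}\log(z-\a)$ has monodromy $1\in K$). A product $\log(z-\a)\cdot p(z)$ with $p\in K[z]$ lies in $\cR$ too. The subtlety is that $F_0$, the uniform part, may itself be transcendental and not obviously in the ring generated by $\cR$ --- but $F_0$ has \emph{trivial} monodromy everywhere, so it lies in $PLM(K)$ with zero monodromy; reducing to the uniform case, one must still express $F_0$, and here the point is that uniform germs with singularities at points of $K$ that arise as Hadamard/e\~ne products reconstruct from $\cR$. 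I expect this direction --- the minimality, i.e.\ that every element of $PLM(K)$ is actually obtained from polynomial-monodromy functions by the three operations --- to be the main obstacle, and it likely requires showing that the Hadamard (resp.\ e\~ne) product with suitable kernels (e.g.\ the Koebe function and its relatives, cf.\ the Proposition on derivatives of monodromies) generates enough, plus an induction on the number of singularities and on the degree of the polynomial monodromies. The clean statement to aim for is: $PLM(K)$ equals the subring generated over $\cR$ by $\{\,\tfrac{1}{2\pi i}\log(z-\a):\a\in K\,\}$, and this subring is closed under both products, so by minimality it is the smallest product-closed ring containing $\cR$.
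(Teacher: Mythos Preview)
Your closure argument is essentially the paper's, but with one error and an unnecessary detour. The error: you write ``$\Delta_\a F\in K[z,\log z]$ is in fact holomorphic by hypothesis, hence $\Delta_\a F\in K[z]$'', which is false --- every element of $K[z,\log z]$ is holomorphic at any $\a\neq 0$, since $\log z$ is. Nothing later actually uses this, so it is harmless. The detour: you invoke the general formula~\eqref{eq:general_monodromy_convolution} and then struggle with the residue terms involving $F_0,G_0$ (which, as you note, need not be polynomial and whose Laurent data you do not control). The paper sidesteps this entirely by working throughout Section~\ref{sec:polylogarithm_class} with the totally-holomorphic formula~\eqref{eq:monodromy_convolution}, so only the convolution integral appears. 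The computation is then a direct term-by-term expansion: writing $\Delta_\a F=\sum a_n z^n$ and $\Delta_\b G=\sum b_m z^m$, one integrates $u^{n-m-1}$; the diagonal $n=m$ produces the contribution $-\tfrac{1}{2\pi i}\sum_n a_nb_n\, z^n(\log z-\log\g)$, and the off-diagonal terms are polynomial in $z$ with coefficients in $K$ (Proposition~\ref{prop:pol_monodromy}). A short lemma, $\int_\a^z u^k(\log u)^l\,du\in\QQ[\a,\log\a][z,\log z]$, then handles the general case $\Delta_\a F,\Delta_\b G\in K[z,\log z]$ by the same expansion.

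For minimality, you are attempting more than the paper proves, and that is why you hit an obstacle. The paper does \emph{not} show that every $F\in PLM(K)$ lies in the Hadamard subring generated by polynomial-monodromy functions. Its ``minimality'' is at the level of monodromy classes: the explicit computation in Proposition~\ref{prop:pol_monodromy} shows that a single Hadamard product of polynomial-monodromy functions already forces a nonzero $\log z$ term into the monodromy, so no Hadamard-closed class with monodromies in a proper subring of $K[z,\log z]$ can contain the polynomial class; and the closure theorem shows $K[z,\log z]$ is stable, so one need go no further. Your plan via Proposition~\ref{prop_decomposition} --- peeling off $\tfrac{1}{2\pi i}\log(z-\a)\,\Delta_\a F$ and reducing to the uniform part $F_0$ --- aims at the stronger generation statement, and your diagnosis that $F_0$ is the real obstruction is correct: an arbitrary uniform germ with isolated singularities in $K$ lies in $PLM(K)$ (trivial monodromy is in $K[z,\log z]$) but has no reason to be a Hadamard combination of polynomial-monodromy functions. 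The paper simply does not claim this.
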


The author doesn't know of a direct proof using the definition of the Hadamard product. It is probably a difficult 
combinatorial problem, but will be interesting to have one.

The monodromy formulas presented in this article do extend to more general singularities. 
We refer to \cite{PM3} for these more general formulas and other new applications.

\section{Convolution formulas.}

The main tool in the proof of Hadamard Multiplication Theorem is Pincherle (or Hadamard) convolution formula (1885, see 
\cite{Pin}, \cite{Lau} and \cite{Ha}):
\begin{proposition}[Pincherle convolution formula]
The Hadamard product has the integral form
\begin{equation} \label{convolution}
F\odot G (z)=\frac{1}{2\pi i} \int_\eta F(u)G(z/u)\, \frac{du}{u}
\end{equation}
where $\eta$ is a positively oriented circle centered at $0$ of radius $r>0$ with $|z|/R_G <r< R_F$, 
where $R_F$, resp. $R_G$, is the radius of convergence of $F$, resp. $G$,
so that $F(u)$ and $G(z/u)$ are well defined.
\end{proposition}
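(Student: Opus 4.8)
The plan is to expand both factors in their Taylor series on the contour and integrate term by term, using the elementary orthogonality relation $\frac{1}{2\pi i}\int_\eta u^{k}\,du = \delta_{k,-1}$ for $\eta$ a positively oriented circle around the origin. First I would check that such a contour exists: the two requirements are $r < R_F$ (so that $F(u)$ is represented by its convergent Taylor series on $|u|=r$) and $|z|/r < R_G$ (so that $G(z/u)$ is represented by its convergent Taylor series there), and these are simultaneously satisfiable by some $r>0$ precisely when $|z| < R_F R_G$. For such $z$ I fix one admissible $r$ and set $\eta = \{|u| = r\}$.

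Next I would substitute $F(u) = \sum_{n\geq 0} A_n u^n$ and $G(z/u) = \sum_{m\geq 0} B_m z^m u^{-m}$, obtaining
$$
F(u)\,G(z/u)\,\frac{1}{u} = \sum_{n,m\geq 0} A_n B_m\, z^m\, u^{\,n-m-1}\ .
$$
On $\eta$ we have $|u| = r$ with $r$ strictly inside $R_F$ and $|z|/r$ strictly inside $R_G$, so $\sum_n |A_n| r^n < \infty$ and $\sum_m |B_m|(|z|/r)^m < \infty$; hence the double series above converges absolutely and uniformly on $\eta$. Justifying this uniform convergence is the one point that requires a genuine (if routine) argument, and it is really the main step, since everything else is formal; granting it, I may integrate term by term.

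Finally, $\frac{1}{2\pi i}\int_\eta u^{\,n-m-1}\,du$ equals $1$ when $n=m$ and $0$ otherwise, so only the diagonal terms survive and
$$
\frac{1}{2\pi i}\int_\eta F(u)\,G(z/u)\,\frac{du}{u} = \sum_{n\geq 0} A_n B_n\, z^n = F\odot G(z)\ ,
$$
which is the asserted identity. This also shows \emph{en passant} that $F\odot G$ has radius of convergence at least $R_F R_G$, and that the value of the integral does not depend on the admissible radius $r$; alternatively this last independence is immediate from Cauchy's theorem, the integrand $F(u)G(z/u)/u$ being holomorphic in the annulus $|z|/R_G < |u| < R_F$.
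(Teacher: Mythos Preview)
Your proof is correct and follows exactly the same approach as the paper: expand $F(u)G(z/u)/u$ as the double series $\sum_{n,m} A_n B_m z^m u^{n-m-1}$, integrate term by term, and use the orthogonality $\frac{1}{2\pi i}\int_\eta u^{n-m-1}\,du=\delta_{n,m}$. You are simply more explicit than the paper about the existence of an admissible radius, the uniform convergence justifying termwise integration, and the independence of the result from the choice of $r$.
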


We can take for  $\eta$ any Jordan curve located  
in the annulus bounded by the circles of radii
$R_F$ and $|z|/R_G$ and  with winding number $+1$ with respect to $0$. 

\begin{proof}
The convolution formula immediately follows from the integration term by term of the series  
$$
\frac{F(u)G(z/u)}{u} =\sum_{n,m\geq 0} F_n G_m u^{n-m-1} z^m
$$
and the application of Cauchy formula
$$
\frac{1}{2\pi i} \int_\eta u^{n-m-1} \, du =\delta_{n,m} 
$$
where $\delta_{n,m}$ denotes the Kronecker symbol. 
\end{proof}

For the exponential e\~ne product we have a similar convolution formula:

\begin{proposition}[Exponential e\~ne product convolution formula]
The exponential e\~ne product has the integral form
\begin{equation} \label{convolution_ene}
F\star_e G (z)=-\frac{1}{2\pi i} \int_\eta F'(u)G(z/u)\, du
\end{equation}
where $\eta$ is a circle centered at $0$ with the same conditions as before.
\end{proposition}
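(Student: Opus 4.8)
The plan is to mimic the proof of Pincherle's convolution formula~(\ref{convolution}), replacing the term-by-term integration by one that picks out the coefficients $-nA_nB_n$ instead of $A_nB_n$. First I would write $F(z)=\sum_{n\geq 0}A_n z^n$ and $G(z)=\sum_{m\geq 0}B_m z^m$, and differentiate $F$ term by term to get $F'(u)=\sum_{n\geq 1}nA_n u^{n-1}$. Then, on a circle $\eta$ of radius $r$ with $|z|/R_G<r<R_F$, both series $F'(u)$ and $G(z/u)$ converge, so the product
\begin{equation*}
F'(u)\,G(z/u)=\sum_{n\geq 1}\sum_{m\geq 0} nA_nB_m\, u^{n-m-1}z^m
\end{equation*}
converges uniformly on $\eta$ and may be integrated term by term.

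The key computation is again the Cauchy orthogonality relation $\frac{1}{2\pi i}\int_\eta u^{n-m-1}\,du=\delta_{n,m}$, which survives the negative exponent because $n-m-1=-1$ exactly when $n=m$. Applying this inside the double sum collapses it to $\sum_{n\geq 1} nA_nB_n z^n$, so that
\begin{equation*}
-\frac{1}{2\pi i}\int_\eta F'(u)\,G(z/u)\,du=-\sum_{n\geq 1}nA_nB_nz^n=F\star_e G(z),
\end{equation*}
which is the desired identity. I would also remark that $F'$ has radius of convergence $R_F$, so the constraint on $r$ is unchanged from Pincherle's formula, and that the integral is independent of the choice of $r$ (or of any Jordan curve in the annulus with winding number $+1$) by Cauchy's theorem applied to the uniformly convergent series. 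One may alternatively deduce the formula directly from~(\ref{convolution}) together with the identity $F\star_e G=-K_0\odot F\odot G$ stated in the introduction: indeed $-K_0(z)=\sum n z^n$, and convolving against it multiplies the $n$-th coefficient by $n$; but the direct term-by-term argument is cleaner and self-contained.

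There is essentially no obstacle here: the only point requiring a word of care is the justification of interchanging summation and integration, which follows from uniform convergence of the double series on the compact curve $\eta$ once $r$ is pinned strictly between $|z|/R_G$ and $R_F$. Everything else is the formal bookkeeping already carried out in the proof of Proposition~2.1. I would therefore present the proof as a single short paragraph paralleling that one, emphasizing only the differentiation step and the unchanged radius of convergence.
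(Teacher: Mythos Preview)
Your proposal is correct and follows essentially the same route as the paper: differentiate $F$ term by term, note that $R_{F'}=R_F$ so the conditions on $\eta$ are unchanged, multiply out $F'(u)G(z/u)$ as a double series, and integrate term by term using the Cauchy orthogonality relation $\frac{1}{2\pi i}\int_\eta u^{n-m-1}\,du=\delta_{n,m}$. The paper's proof is just a two-line sketch of exactly this argument; your additional remarks on uniform convergence and the alternative derivation via $-K_0\odot F\odot G$ are fine but not needed.
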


\begin{proof}
The proof is similar observing that $R_{F'}=R_F$ and integrating term by term
$$
F'(u)G(z/u) =\sum_{n,m\geq 0} n F_n G_m u^{n-m-1} z^m
$$
and using Cauchy formula as before. 
\end{proof}

The convolution formula (\ref{convolution}) gives the analytic continuation of 
the Hadamard product $F\odot G$ using the analytic continuation of 
$F$ and $G$. We only need to deform homotopically the contour $\eta$ when we move $z$ around. The 
only obstruction to this continuation follows from 
a close inspection of the convolution formula: $F\odot G (z)$ does extend
analytically unless $z=0$ (except for the principal branch) or when we hit a point $z\in \CC$ such that we have $u\in \CC$ fulfilling both conditions
\begin{equation*}
\begin{cases}
u&=\a \\
z/u&=\b
\end{cases}
\end{equation*}
with $\a$ and $\b$ singularities of $F$ and $G$ respectively. This happens only when  $z=\a \b$.
Thus we have proved Hadamard Theorem:

\begin{theorem}[Hadamard Multiplication Theorem, 1899, \cite{Ha}]
 The singularities of the principal branch of $F\odot G$ are of the form $\g=\a \b$ where 
 $\a$ and $\b$ are singularities of $F$ and $G$ respectively.
\end{theorem}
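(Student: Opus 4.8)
The plan is to read the Pincherle convolution formula (\ref{convolution}) not merely as an identity valid near $z=0$ but as a recipe for the analytic continuation of $F\odot G$. The right-hand side $\frac{1}{2\pi i}\int_\eta F(u)G(z/u)\,\frac{du}{u}$ is a holomorphic function of $z$ on any open set over which the integration contour $\eta$ can be chosen to depend continuously on $z$ while remaining a Jordan curve of winding number $+1$ about $0$ that avoids, in the $u$-variable, the singular locus of the integrand. So the first step is to locate that singular locus: for fixed $z\neq 0$, the function $u\mapsto F(u)G(z/u)/u$ (a suitable branch) is holomorphic except at $u=0$, at the points $u=\a$ where $\a$ runs over the singularities of $F$, and at the points $u=z/\b$ where $\b$ runs over the singularities of $G$. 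In the original range $|z|/R_G<r<R_F$ the circle $\eta=\{|u|=r\}$ encloses $0$ together with all the points $z/\b$ (which satisfy $|z/\b|\le |z|/R_G<r$), while leaving all the $\a$ (which satisfy $|\a|\ge R_F>r$) in its exterior.

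Next I would move $z$ along a path $t\mapsto z(t)$ in $\CC$ starting near $0$ and disjoint from the product set $(\a\b)$, and track the contour. The singular points $u=z(t)/\b$ of the integrand move continuously with $t$, but they can never reach a point $\a$ — that would force $z(t)=\a\b$, excluded by hypothesis — nor the point $0$, which would force $z(t)=0$, avoided on non-principal continuations; meanwhile the points $\a$ stay fixed. Hence one can deform $\eta$ continuously with $t$, always a Jordan curve of winding number $+1$ about $0$ with $0$ and all the $z(t)/\b$ strictly inside and all the $\a$ strictly outside. Since the (branch of the) integrand is holomorphic in $u$ throughout the closed region swept by $\eta$ during any such deformation, Cauchy's theorem guarantees the value of the integral is unchanged, so it furnishes an unambiguous analytic continuation of $F\odot G$ along the path. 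Consequently $F\odot G$ continues analytically along every path in $\CC$ avoiding the set $(\a\b)$ (and, for the principal branch, through $0$), so all its singularities are products $\g=\a\b$.

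The step I expect to need the most care is the contour bookkeeping when the singular sets $(\a)$ and $(\b)$ are infinite and may accumulate, since then ``separate the $z/\b$'s from the $\a$'s by a Jordan curve'' is not a purely formal statement. The remedy is a compactness reduction: along a compact sub-path with $|z|$ bounded and bounded away from $0$, only finitely many $\a$ lie in any fixed disc and only finitely many collisions $z(t)/\b=\a$ can occur, so one is effectively separating two disjoint \emph{finite} point sets, which is elementary; the remaining infinitely many points $z(t)/\b$, arising from large $\b$, cluster harmlessly near $0$, i.e. on the inside of $\eta$. One then patches these local deformations along the whole path. A secondary point to verify is that $u=0$ never obstructs the deformation — it stays permanently inside $\eta$ — and that the exceptional role of $z=0$ is exactly the branch point separating the principal branch from the others, consistent with the statement.
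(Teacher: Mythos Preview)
Your argument is correct and is essentially the paper's own proof: deform the Pincherle contour $\eta$ homotopically as $z$ moves, and observe that the only obstruction occurs when some $u=\a$ coincides with some $z/\b$, forcing $z=\a\b$. You have simply written out in more detail what the paper states tersely, including the separation of the moving points $z/\b$ from the fixed points $\a$ and the compactness bookkeeping for infinite singular sets, none of which the paper makes explicit.
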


The origin $0$ is not a singularity of the principal branch by assumption. But
the convolution formula  shows that the origin $0$ can become a singularity of other 
branches of the analytic continuation of $F\odot G$ because of the $1/u$ factor in the integrand.
Moreover, the convolution formula also proves that if $F$ and $G$ are fluent in the sense of 
Liouville and Ritt, which, roughly speaking, means  that the functions have 
an analytic extension around singularities, then 
their Hadamard product $F\odot G$ is fluent. 
The definition of fluency given by Ritt in his book\footnote{Ritt's book is from 
the pre-differential algebra era.} (1948,\cite{Ri}) on Liouville 
theory of integration on finite terms (1833, \cite{Li1}, \cite{Li2}) is not 
precise and indeed there exists 
more or less stronger versions of fluency (see section 5.4.1 of \cite{Kh}, and in particular \cite{Le}). Fluency is a key property 
of functions in the old Liouville classification of transcendental functions. In simpler terms, we can 
state the following Corollary of Hadamard Multiplication Theorem (or the Convolution Formula),

\begin{corollary}
If both $F$ and $G$ have only isolated singularities with monodromy, then $F\odot G$ has only isolated 
singularities with monodromy.
\end{corollary}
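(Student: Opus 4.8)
The plan is to deduce the statement directly from the Pincherle convolution formula (\ref{convolution}) by the contour-deformation argument already sketched in the paragraph preceding the Hadamard Multiplication Theorem; the only thing one really has to add is an explicit bookkeeping of the singular locus of $F\odot G$ and of the branches of the (multivalued) integrand.

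First I would set up the singular locus. Let $(\alpha)$, resp. $(\beta)$, be the singular set of $F$, resp. $G$. Since $F$ and $G$ are holomorphic germs at $0$ with radii of convergence $R_F,R_G>0$, we have $(\alpha)\subset\{|u|\ge R_F\}$ and $(\beta)\subset\{|u|\ge R_G\}$, and by hypothesis each point of $(\alpha)$ and $(\beta)$ is an isolated singularity with monodromy; in particular $(\alpha)$ and $(\beta)$ are locally finite in $\CC$. I would record that the product set $(\gamma)=(\alpha\beta)$ is then locally finite in $\CC^{\ast}$, with accumulation possible only at $0$ and $\infty$: if $\alpha_n\beta_n$ were an infinite sequence of pairwise distinct points converging to some $c\in\CC^{\ast}$, then from $|\alpha_n|\ge R_F$, $|\beta_n|\ge R_G$ and $|\alpha_n\beta_n|\to|c|$ one gets that $|\alpha_n|$ and $|\beta_n|$ stay bounded, so by local finiteness the $\alpha_n$ and the $\beta_n$ take only finitely many values, hence so do the products $\alpha_n\beta_n$, a contradiction. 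Thus $\{0\}\cup(\gamma)$ is closed in $\CC^{\ast}$ and discrete in $\CC\setminus\{0\}$.

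Next I would promote the convolution formula from the principal branch to an arbitrary branch. Fix $z_0\in\CC^{\ast}\setminus(\gamma)$ where the principal branch is defined, with contour $\eta_0$ a circle of radius $r$, $|z_0|/R_G<r<R_F$, of winding number $+1$ about $0$. Given a path $\sigma\colon[0,1]\to\CC^{\ast}\setminus(\gamma)$ starting at $z_0$, I would continuously deform the data (a contour $\eta_t$; a branch of $F$ along $\eta_t$; a branch of $u\mapsto G(\sigma(t)/u)$ along $\eta_t$), keeping at every time $t$ that $\eta_t$ is a Jordan curve of winding number $+1$ about $0$, disjoint from $(\alpha)$ and from $\sigma(t)\cdot(\beta)^{-1}$, along which the chosen branches are single-valued. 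This is possible precisely because the two locally finite sets $(\alpha)$ and $\sigma(t)\cdot(\beta)^{-1}$ are disjoint, which is exactly the condition $\sigma(t)\notin(\gamma)$: whenever a point of the moving singular locus of the integrand approaches $\eta_t$, it does so from one side of the Jordan curve and can be avoided by an ambient isotopy of $\CC^{\ast}$ fixing $0$ that sweeps the curve past it, while one simultaneously updates the chosen branches of $F$ and $G$. The integral $\frac{1}{2\pi i}\int_{\eta_t}F(u)G(\sigma(t)/u)\,\frac{du}{u}$ then depends holomorphically on $z=\sigma(t)$ and furnishes the analytic continuation of $F\odot G$ along $\sigma$.

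It follows that $F\odot G$ admits analytic continuation along every path in $\CC^{\ast}\setminus(\gamma)$, hence is a multivalued analytic function there, so its singular set in $\CC$ is contained in $\{0\}\cup(\gamma)$. By the first step, every $\gamma\in(\gamma)$ has a punctured neighborhood in $\CC^{\ast}$ meeting no other point of $\{0\}\cup(\gamma)$, throughout which $F\odot G$ continues analytically all the way around $\gamma$ and arbitrarily far; that is exactly the assertion that $\gamma$ is an isolated singularity with monodromy, and likewise for $0$ when it is not an accumulation point of $(\gamma)$. I expect the main obstacle to be this contour-deformation step: making it rigorous requires careful tracking of the branches of the multivalued integrand as singularities cross $\eta_t$, and a proof that a legitimate contour — Jordan, winding $+1$ about $0$, disjoint from $(\alpha)$ and from $z\cdot(\beta)^{-1}$, and carrying single-valued branches of $F$ and of $u\mapsto G(z/u)$ — can always be restored as long as $z$ stays in $\CC^{\ast}\setminus(\gamma)$; the remaining steps are elementary topology of locally finite sets.
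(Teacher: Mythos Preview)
Your proposal is correct and follows essentially the same route as the paper: the Corollary is presented there as an immediate consequence of the Pincherle convolution formula together with the contour-deformation argument that yields the Hadamard Multiplication Theorem, and you have simply fleshed out that sketch with the explicit verification that $(\alpha\beta)$ is locally finite in $\CC^\ast$ and with the bookkeeping of branches along the deformed contour. The paper itself gives no further detail beyond the paragraph you cite, so your write-up is a faithful expansion of its argument.
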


These considerations give the following geometric improvement of Hadamard Theorem 
(the reader, if not familiar, can skip this Theorem where we use the language of 
log-Riemann surfaces, see \cite{BPM1} for general background, \cite{BPM2} for more 
general definitions, and \cite{BPM3} and \cite{BPM4} for further properties).

\begin{theorem}
Let $\cS_F$ and $\cS_G$  be the log-Riemann surfaces of the 
germs at $0$ defined by $F$ and $G$ respectively. Then the log-Riemann surface $\cS_{F\odot G}$ has a ramification set $\cR_{F\odot G}$ such that 
$$
\pi_{F\odot G} (\cR_{F\odot G}) \subset \{0\} \cup (\pi_F (\cR_{F})  . \pi_G (\cR_{G}))  
$$
where $\pi_{F\odot G} : \cS_{F\odot G} \to \CC$, $\pi_{F} : \cS_{F} \to \CC$ and 
$\pi_{G} : \cS_{G} \to \CC$ are the canonical projections, and $A.B$ denotes the set of all products $ab$ with $a\in A$ and $b\in B$.
In particular, if  the ramification sets $\cR_F$ and $\cR_G$ 
are discrete, then $\cR_{F\odot G}$ is discrete. 
\end{theorem}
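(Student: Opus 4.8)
The plan is to deduce this geometric statement directly from the Pincherle convolution formula (\ref{convolution}) together with the analysis of obstructions to analytic continuation already carried out in the excerpt, recast in the language of log-Riemann surfaces. Recall that a point of $\cS_{F\odot G}$ is (a germ of) an analytic continuation of the principal branch of $F\odot G$ along some path in $\CC\setminus\{0\}$, and that the ramification set $\cR_{F\odot G}$ consists of those points over which the projection $\pi_{F\odot G}$ fails to be a local homeomorphism, i.e. the points at which a branch cannot be continued. So the task reduces to showing: if $z_0\in\CC\setminus\{0\}$ is \emph{not} of the form $\a\b$ with $\a\in\pi_F(\cR_F)$, $\b\in\pi_G(\cR_G)$, then every branch of $F\odot G$ extends holomorphically to a neighborhood of $z_0$, i.e. $z_0\notin \pi_{F\odot G}(\cR_{F\odot G})$.

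First I would fix such a $z_0$ and a branch of $F\odot G$ at $z_0$, obtained by continuing the principal branch along a path $\g$ from the base point to $z_0$. Using formula (\ref{convolution}), the principal branch is represented by $\frac{1}{2\pi i}\int_\eta F(u)G(z/u)\,\frac{du}{u}$ with $\eta$ a Jordan curve of winding number $+1$ around $0$ lying in the annulus where $F(u)$ and $G(z/u)$ are both defined; the branch at $z_0$ is represented by the same integral over the curve $\eta_{z_0}$ obtained by dragging $\eta$ along $\g$ (this is exactly the ``choreographic'' deformation alluded to in Figure~1, where the contour is pushed so as to avoid all the moving obstacles $u=\a$ and $u=z/\b$). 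Concretely, $\eta_{z_0}$ is a loop in the $u$-plane, winding once around $0$, that stays off the singular set $\{\a\}$ of $F$ and off the set $\{z_0/\b\}$ where $G(z_0/u)$ is singular, and lies in the region where both $F(u)$ and $G(z_0/u)$ admit the relevant analytic continuations near $\eta_{z_0}$. This is possible precisely because $z_0\ne 0$ and because $z_0\ne\a\b$ for the relevant $\a,\b$, so the two obstacle sets $\{\a\}$ and $\{z_0/\b\}$ can be separated by a loop of the required winding number.

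Next I would check the key point: this integral representation persists under a small perturbation of $z_0$. For $z$ near $z_0$ the singular set $\{z/\b\}$ moves continuously, so for $|z-z_0|$ small enough $\eta_{z_0}$ still avoids both $\{\a\}$ and $\{z/\b\}$ and still lies in the common domain of continuation; hence $z\mapsto \frac{1}{2\pi i}\int_{\eta_{z_0}} F(u)G(z/u)\,\frac{du}{u}$ is holomorphic in that neighborhood and, by the monodromy/homotopy argument, agrees with the chosen branch. Therefore the branch extends holomorphically across $z_0$, so $z_0\notin\pi_{F\odot G}(\cR_{F\odot G})$. This gives the inclusion
$$
\pi_{F\odot G}(\cR_{F\odot G})\subset \{0\}\cup(\pi_F(\cR_F).\pi_G(\cR_G)).
$$
The final assertion about discreteness is then immediate: if $\pi_F(\cR_F)$ and $\pi_G(\cR_G)$ are discrete subsets of $\CC$, their product set, together with $\{0\}$, is a countable set with no accumulation point in $\CC\setminus\{0\}$, and a standard property of log-Riemann surfaces (the projection is a local homeomorphism off $\cR_{F\odot G}$, with $\cR_{F\odot G}$ mapping into this discrete set) forces $\cR_{F\odot G}$ itself to be discrete.

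The main obstacle I anticipate is purely the bookkeeping of the contour deformation: one must argue carefully that as $z$ traverses $\g$, the curve $\eta$ can be deformed to avoid the colliding singularities $u=\a$ and $u=z/\b$ without ever changing its winding number about $0$, and that the integrand continues to make sense along the deformed contour — this is the content of the ``choreographic monodromy'' picture and is where the hypothesis ``$z\ne\a\b$'' is actually used (when $z=\a\b$ the two obstacles $u=\a$ and $u=z/\b$ coincide and pinch the contour). Making this deformation argument precise in the log-Riemann surface language, rather than just for the principal branch, is the only real work; everything else is a direct consequence of the convolution formula and standard facts about log-Riemann surfaces from \cite{BPM1}.
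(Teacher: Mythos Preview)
Your proposal is correct and follows essentially the same route as the paper. The paper does not give a separate proof of this theorem; it simply says that ``these considerations give the following geometric improvement of Hadamard Theorem,'' referring to the convolution formula and the preceding obstruction analysis (that continuation fails only at $z=0$ or at $z=\a\b$), and your write-up is precisely a careful unpacking of those considerations in the log-Riemann surface language. One small caveat: your discreteness step asserts that the product of two discrete subsets of $\CC$ has no accumulation point in $\CC\setminus\{0\}$, which is not true for arbitrary discrete sets; you should invoke the extra information that the projected ramification loci are bounded away from $0$ (coming from $R_F,R_G>0$ on each sheet) so that only finitely many products land in any given compact set.
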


\medskip

For the exponential e\~ne product we have a similar Theorem as Hadamard Multiplication Theorem using the 
formula $F\star_e G=-K_0\odot F\odot G$ (or using the convolution formula with the same proof
observing that that the singularities of $F$ and $F'$ are the same). Hence we obtain:

\begin{theorem}[Singularities of the exponential e\~ne product]
The singularities of the principal branch of $F\star_e G$ are of the form $\g=\a \b$ where 
 $\a$ and $\b$ are singularities of $F$ and $G$ respectively.
\end{theorem}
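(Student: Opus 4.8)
The plan is to deduce the theorem directly from the e\~ne convolution formula (\ref{convolution_ene}), by repeating almost verbatim the contour deformation argument that yielded the Hadamard Multiplication Theorem from Pincherle's formula (\ref{convolution}). There are only two new observations to make. The first is that a power series and its derivative have the same radius of convergence, $R_{F'}=R_F$, and, more importantly, the same isolated singularities together with compatible analytic continuations: if $F$ extends analytically along a path avoiding its singularities, then so does $F'$, and its continuation is the derivative of that of $F$. Hence the singularities of $u\mapsto F'(u)$ are exactly the points $\a$. The second is that the integrand $F'(u)G(z/u)$ carries no factor $1/u$, so, in contrast with the Hadamard case, the point $u=0$ is harmless.

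First I would fix a base point $z_0$ of small modulus so that the principal branch of $F\star_e G$ is defined, and choose the circle $\eta$ of radius $r$ with $|z_0|/R_G<r<R_{F'}=R_F$, so that (\ref{convolution_ene}) represents $F\star_e G$ near $z_0$. To continue $F\star_e G$ along a path $z(t)$ issuing from $z_0$, I would deform $\eta$ through Jordan curves $\eta_t$ of winding number $+1$ about $0$, lying in $\CC\setminus\{0\}$, chosen so that at all times the singularities of $u\mapsto F'(u)$ (the points $\a$) stay outside $\eta_t$ and the singularities of $u\mapsto G(z(t)/u)$ (the points $z(t)/\b$) stay inside $\eta_t$. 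As long as such a family exists, the integral in (\ref{convolution_ene}) depends analytically on $z(t)$ and furnishes the analytic continuation.

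The only obstruction is a pinch of the contour between a point $\a$ required to be outside and a point $z(t)/\b$ required to be inside, that is, the equation $\a=z(t)/\b$, equivalently $z(t)=\a\b$. Away from the discrete set $(\a\b)$ no pinch occurs, and since there is no $1/u$ factor the origin $u=0$ imposes no constraint either; hence the principal branch continues analytically across every point of $\CC\setminus(\a\b)$, which is the assertion. The step I expect to be the most delicate — though it is exactly the one already needed for Hadamard's theorem — is verifying that the separating family $\eta_t$ exists and can be chosen to vary continuously with $t$: this rests on the discreteness of the sets $(\a)$ and $(z(t)/\b)$, on their disjointness whenever $z(t)\notin(\a\b)$, and on the elementary fact that two disjoint discrete sets in the appropriate annulus can be separated by a Jordan curve of winding number one depending continuously on the parameter.

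Alternatively, and much more quickly, I would invoke the identity $F\star_e G=-K_0\odot F\odot G$. Since the Koebe function $K_0(z)=-z/(1-z)^2$ is rational with its unique singularity at $z=1$, two applications of the Hadamard Multiplication Theorem place the singularities of $-K_0\odot(F\odot G)$ in $\{1\}\cdot(\a\b)=(\a\b)$, the only point requiring attention being that the spurious singularity at $0$ which the inner Hadamard product might create is absent here, as is transparent from (\ref{convolution_ene}). I would take the convolution argument as the proof proper and record the $K_0$ route as a remark.
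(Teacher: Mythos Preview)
Your proposal is correct and matches the paper's own argument: the paper derives the theorem either from the identity $F\star_e G=-K_0\odot F\odot G$ together with the Hadamard Multiplication Theorem, or from the e\~ne convolution formula (\ref{convolution_ene}) via the same contour-deformation proof, noting that $F$ and $F'$ have the same singularities. You give precisely these two routes, only with the emphasis reversed.
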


We have a corresponding Theorem for the log-Riemann surface $\cS_{F\star_e G}$ which is 
simpler since we don't need to add the origin in the locus of the projection.

\begin{theorem}
Let $\cS_F$ and $\cS_G$  be the log-Riemann surfaces of the 
germs at $0$ defined by $F$ and $G$ respectively. Then the log-Riemann surface $\cS_{F\star_e G}$ has a ramification set $\cR_{F\star_e G}$ such that 
$$
\pi_{F\star_e G} (\cR_{F\star_e G}) \subset \pi_F (\cR_{F})  . \pi_G (\cR_{G})  
$$
where $\pi_{F\star_e G} : \cS_{F\star_e G} \to \CC$, $\pi_{F} : \cS_{F} \to \CC$ and 
$\pi_{G} : \cS_{G} \to \CC$ are the canonical projections, and $A.B$ denotes the set of all products $ab$ with $a\in A$ and $b\in B$.
In particular, if  the ramification sets $\cR_F$ and $\cR_G$ 
are discrete, then $\cR_{F\star_e G}$ is discrete. 
\end{theorem}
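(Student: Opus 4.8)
The plan is to run exactly the argument that yields the log-Riemann surface refinement of Hadamard's Multiplication Theorem, but starting from the exponential e\~ne convolution formula (\ref{convolution_ene}) in place of Pincherle's formula (\ref{convolution}). The first step is the observation already flagged in the text: the germ $F'$ at $0$ has a log-Riemann surface $\cS_{F'}$ with $\pi_{F'}(\cR_{F'})=\pi_F(\cR_F)$. Indeed $R_{F'}=R_F$, and since analytic continuation commutes with differentiation, a branch of $F$ continues holomorphically across a point of $\CC$ if and only if the corresponding branch of $F'$ does; differentiation may raise the order of a pole or keep a transcendental singularity transcendental, but it never creates or destroys a branch point of the surface. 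So in the convolution formula we may freely use the singular set $(\a)$ of $F$ on the $F'$-side.

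Next I would construct the continuations of $F\star_e G$. Near $0$ the germ is represented by $F\star_e G(z)=-\frac{1}{2\pi i}\int_\eta F'(u)G(z/u)\,du$ with $\eta$ a positively oriented Jordan curve of winding number $+1$ about $0$ in the annulus $|z|/R_G<|u|<R_F$. Given a path $t\mapsto z_t$ in $\CC$ starting near $0$, one deforms $\eta$ to a continuous family $\eta_t$ of such Jordan curves so that along $\eta_t$ one never has $u$ equal to a singularity of $F'$ nor $z_t/u$ equal to a singularity of $G$; the integral then depends analytically on the moving endpoint and provides the analytic continuation, which is assembled into the log-Riemann surface $\cS_{F\star_e G}$ in the framework of \cite{BPM1,BPM2}. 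Such a deformation exists, and the continuation is unobstructed, as long as the closed sets $E_t:=(\a)$ and $E'_t:=\{u:\ z_t/u\in(\b)\}$ can be separated by a Jordan curve of winding number $+1$ about $0$ — which is possible exactly while $E_t\cap E'_t=\emptyset$. Since $E_t\cap E'_t\neq\emptyset$ forces $z_t=\a\b$ for singularities $\a$ of $F$ and $\b$ of $G$, every branch point of $\cS_{F\star_e G}$ projects into $\pi_F(\cR_F)\cdot\pi_G(\cR_G)$, which is the asserted inclusion.

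The only place where the argument departs from the Hadamard case is the behaviour at the origin, and there it is in fact simpler. Unlike the integrand $F(u)G(z/u)/u$ of Pincherle's formula, the integrand $F'(u)G(z/u)$ is holomorphic in $z$ in a full neighbourhood of $0$, uniformly on any fixed compact contour $\eta$ avoiding $0$ (recall $G$ is holomorphic at $0$), so $z=0$ never becomes a ramification point on any branch and no $\{0\}$ has to be adjoined to the projected ramification set. Finally, when $\cR_F$ and $\cR_G$ are discrete, near each candidate value $\g=\a\b$ only finitely many pairs $(\a,\b)$ with $\a\b=\g$ can contribute to a pinching of $\eta_t$, so the continuation is locally finitely ramified over $\CC$ and $\cR_{F\star_e G}$ is discrete in $\cS_{F\star_e G}$.

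The main obstacle is the homotopy/pinching step: making precise that the Jordan contour $\eta_t$ can always be chosen to separate $E_t$ from $E'_t$ whenever these sets are disjoint, and that the obstruction to this is exactly a coincidence $z_t=\a\b$. This is the technical heart, handled precisely as in the proof of Hadamard's theorem recalled above; everything else is bookkeeping within the log-Riemann surface formalism together with the elementary remark that $F$ and $F'$ have the same ramification locus.
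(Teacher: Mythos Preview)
Your approach is exactly the one the paper indicates: run the contour-deformation argument behind Hadamard's Multiplication Theorem directly on the e\~ne convolution formula (\ref{convolution_ene}), after noting that $F$ and $F'$ share the same singular locus, and observe that the absence of the factor $1/u$ removes the need to adjoin $\{0\}$. The paper itself gives no more than this sketch, so you are matching it.

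One point deserves sharpening. Your justification for why $z=0$ is never a ramification point---``the integrand $F'(u)G(z/u)$ is holomorphic in $z$ in a full neighbourhood of $0$, uniformly on any fixed compact contour $\eta$ avoiding $0$''---is literally true but does not distinguish the two products: the Hadamard integrand $F(u)G(z/u)/u$ is \emph{equally} holomorphic in $z$ near $0$ for $u$ on a fixed contour avoiding $0$. The genuine distinction lies in the $u$-variable. For a non-principal branch the deformed contour carries ``train-track'' pieces running between $\a$ and $z/\b$; when $z$ is subsequently moved around $0$, these pieces are dragged around $u=0$. In the Hadamard case the explicit pole $1/u$ at $u=0$ is an obstruction and produces the extra $\log z$ in the monodromy (as one sees concretely for $\Li_2$). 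In the e\~ne case the integrand $F'(u)G(z/u)$ has no such pole---$F'$ is holomorphic at $0$ and there is no $du/u$---so the deformation goes through and no ramification appears over $0$. Rephrasing your paragraph along these lines would make the argument actually explain the asymmetry rather than assert it.
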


\section{Monodromy of singularities and monodromy operator.}

As already observed, the inspection of the convolution formula shows also that if both 
$F$ and $G$ have isolated singularities 
with monodromy  then the singularities of $F\odot G$ and of $F\star_e G$ are also isolated with 
monodromy (including the case of trivial monodromy). More precisely, we recall again the definition of  
the monodromy and we define the monodromy operator:

\begin{definition}[Monodromy and operator monodromy of an isolated singularity]
 
Let $F$ be an holomorphic function with an isolated singularity $\a\in \CC$. We denote 
$F_+$ the analytic continuation of $F$ when turning  around $\a$ once in the positive 
orientation (in a neighborhood without any other singularity). The monodromy of $F$ 
at the point $\alpha\in \CC$ is
$$
\Delta_{\a} F =F_+(z)-F(z) \ .
$$
This definition can be extended also to regular points $\a$ of $F$. The map $\Delta_\a$ 
defines a linear operator, the monodromy 
operator at $\a$,
on the vector space $V_\a$ of holomorphic functions having a regular point or an 
isolated singularity at $\a$.  We also 
define the operator $\Sigma_\a$ such that
$$
\Sigma_\a F=F_+
$$
and we have
\begin{equation*}
\Sigma_\a=I+\Delta_\a
\end{equation*}
In general, we define that monodromy operator along a path $\gamma$, $\sigma_\gamma : (\CC, \gamma(0))\to (\CC, \gamma(1))$, which
associates to a holomorphic germ at $\gamma(0)$ its Weierstrass holomorphic continuation along $\gamma$ at $\gamma(1)$. In a domain 
where the germ is holomorphic, we have  that $\sigma_\gamma =\sigma_{[\gamma]}$ only depends on the homotopy class 
$[\g]$ of $\gamma$, and for a loop 
$\gamma$ with winding number $1$ with respect to $\a$, we have
$\Sigma_\alpha =\sigma_{[\gamma]}$.
\end{definition}

Note that $\Delta_{\a} F$ can develop non-local singularities 
elsewhere at other points distinct from $\a$. The structure of the monodromy at $\alpha$ is 
important, and justifies the following definitions.

\begin{definition}
A function $F$ with an isolated singularity at $\a\in \CC$ has a holomorphic, 
resp. meromorphic,  uniform, monodromy at $\a$ when $\Delta_{\a} F$ 
is holomorphic, resp. meromorphic, with a uniform isolated singularity at $\a$, in a neighborhood of $\a$.
The singularity $\a$ is totally holomorphic if both $\Delta_\a F$ and $F_0=F-\frac{1}{2\pi i} \log (z-\a) \Delta_\a F$ 
are holomorphic.
\end{definition}

Recall (see Proposition \ref{prop_decomposition}) that when $\Delta_\a^2 F=0$ we 
can write uniquely in a neighborhood of $\a$
$$
F=F_0+\frac{1}{2\pi i} \log (z-\a) \Delta_\a F
$$
where $F_0$ has a uniform isolated singularity at $\a$.
The growth behavior of $F$ near the singularity $\a$ is important.

\begin{definition}\label{def_integrable}
Let $F$ be function with a regular point or an isolated singularity at $\a\in \CC$. Then $F$ is 
integrable at $\a$, or $\a$ is an integrable singularity, if we have   
$$
\Slim_{z\to \a} (z-\a) \,  F (z) \to 0
$$ 
where this is a Stolz limit, that is, $z\to \a$ with $\arg (z-\a)$ bounded. The singularity $\a$ 
has order $\rho >0$, if $ F(z)=\cO(|z-\a|^{-\rho})$  in any Stolz angle\footnote{These are also called 
``singularities with moderate growth in sectors'', see \cite{Sau}, section 9.2, or ``regular singularities'' in the context 
of solutions of differential systems, see \cite{AB}, p.8.}.
\end{definition}

% Note, in particular, that holomorphic monodromies are integrable but 
% meromorphic not holomorphic  monodromies are not. 

The logarithmic function branched at $\a$, $\log(z-\a)$, is integrable at $\a$.
The space of functions that are integrable, of order $\rho$ or finite 
order functions at $\a$ form a $\CC$-vector space. These vector spaces are $\cO_\a$-modules, 
where $\cO_\a$ is the local ring of holomorphic germs at $\a$. 
For an integrable singularity we have
$$
\int_{[\alpha,\alpha +\eps]} F (u)  \, du \to 0
$$
and 
$$
\int_{\eta}  F (u)  \, du \to 0
$$
when $\eps\in \CC$ is small and $\eps\to 0$, and 
the loop $\eta$ is a small circle around $\alpha$ with radius $\to 0$. 
Adapting the arguments given in \cite{Ju} we can prove that the Hadamard product of integrable singularities 
is integrable, but we don't need this result in this article.

\begin{proposition}\label{prop:holomorphic+integrable}
If $F$ is holomorphic in a pointed neighborhood of $\a$, i.e. $\Delta_\a F=0$, and integrable, 
then $F$ is holomorphic at $\a$. 
\end{proposition}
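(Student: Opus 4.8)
The plan is to reduce the statement to the vanishing of the principal part of the Laurent series of $F$ at $\a$. Since $\Delta_\a F=0$ and $\a$ is an isolated singularity, $F$ is single-valued and holomorphic on a genuine punctured disc $D^{*}=\{0<|z-\a|<\rho\}$, hence it admits there a Laurent expansion $F(z)=\sum_{n\in\ZZ}c_n(z-\a)^n$. Proving that $F$ extends holomorphically across $\a$ is the same as proving $c_{-k}=0$ for every integer $k\ge 1$.

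To get this I would use exactly the two facts recorded just before the proposition. First, the germs integrable at $\a$ form an $\cO_\a$-module, so for each $k\ge1$ the germ $(u-\a)^{k-1}F(u)$ is again integrable at $\a$. Second, for any germ $G$ integrable at $\a$ one has $\int_{\eta}G(u)\,du\to0$ as the circle $\eta$ centered at $\a$ shrinks to $\a$. Applying the second fact to $G(u)=(u-\a)^{k-1}F(u)$, and noting that by Cauchy's theorem in $D^{*}$ the value $\int_{\eta}(u-\a)^{k-1}F(u)\,du=2\pi i\,c_{-k}$ does not depend on the radius of $\eta$, we let the radius go to $0$ and conclude $c_{-k}=0$. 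Hence $F(z)=\sum_{n\ge0}c_n(z-\a)^n$ is holomorphic at $\a$. Equivalently, one may argue directly from the uniform bound $\mu(r):=\sup_{|z-\a|=r}|z-\a|\,|F(z)|\to0$ together with the Cauchy estimate $|c_{-k}|\le r^{k-1}\mu(r)$, which again forces $c_{-k}=0$ on letting $r\to0$.

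The only point requiring attention is the legitimacy of applying these integrability properties: the hypothesis is phrased as a Stolz limit, but because $\Delta_\a F=0$ makes $F$ single-valued on the whole punctured disc, every sequence $z_n\to\a$ has $\arg(z_n-\a)$ bounded, so the Stolz limit in Definition \ref{def_integrable} is simply the ordinary limit $\lim_{z\to\a}(z-\a)F(z)=0$, and the module property and the vanishing of $\int_\eta$ hold without restriction. Everything else — existence of the Laurent expansion, radius-independence of the residue integral, and the identification of $\int_\eta G$ with $2\pi i$ times a Laurent coefficient — is routine, so there is no genuine obstacle beyond this bookkeeping.
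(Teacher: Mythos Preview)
Your proof is correct and follows essentially the same route as the paper: write the Laurent expansion on the punctured disc, express the negative coefficients via the Cauchy integral, and let the contour shrink to $\a$ using the integrability hypothesis to conclude all $c_{-k}$ vanish. You are simply more explicit than the paper about why each $c_{-k}$ (not just $c_{-1}$) is killed, invoking the $\cO_\a$-module property to justify that $(u-\a)^{k-1}F(u)$ remains integrable.
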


\begin{proof}
We can write a converging  Laurent expansion in a pointed neighborhood of $\a$,
$$
F(z)=\sum_{n\in \ZZ} a_n (z-\a )^n
$$
and we have the Cauchy formula for the coefficients
$$
a_n=\frac{1}{2\pi i} \int_\eta \frac{f(u)}{(z-u)^n} \, du \ .
$$
When we shrink $\eta$ to $\a$, the integrability estimates show that $a_n=0$ for all $n<0$, hence $F$ is holomorphic.
\end{proof}

\begin{proposition}\label{prop:product}
 We have
 $$
 \Delta_\a (F.G)= \Delta_\a (F).G + F. \Delta_\a (G) +\Delta_\a (F).\Delta_\a (G)
 $$
\end{proposition}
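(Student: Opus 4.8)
The plan is to reduce everything to the multiplicativity of analytic continuation. First I would observe that if $F$ and $G$ each have a regular point or an isolated singularity at $\a$, then so does the product $F.G$, so all the operators $\Delta_\a$ and $\Sigma_\a = I+\Delta_\a$ are defined on the germs in question (the product of functions holomorphic and single‑valued in a common pointed neighborhood of $\a$ is again holomorphic and single‑valued there). The one substantive point is that Weierstrass analytic continuation along a path is a ring homomorphism on germs: continuing the germ $F.G$ once around $\a$ in the positive orientation through a neighborhood containing no other singularity gives $(F.G)_+ = F_+ . G_+$, i.e.
$$
\Sigma_\a (F.G) = \Sigma_\a(F).\Sigma_\a(G) \ .
$$
Concretely, this follows because in a finite chain of overlapping disks realizing the continuation, the identity "product of the two continued germs $=$ continuation of the product germ" holds on the first disk and propagates to the next by the identity theorem on the nonempty overlap.

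Granting this, the proposition is a one‑line algebraic manipulation. Expanding with $\Sigma_\a = I + \Delta_\a$,
$$
\Sigma_\a(F.G) = (F + \Delta_\a F).(G + \Delta_\a G) = F.G + F.\Delta_\a G + (\Delta_\a F).G + (\Delta_\a F).(\Delta_\a G),
$$
and subtracting $F.G$ using $\Delta_\a(F.G) = \Sigma_\a(F.G) - F.G$ yields exactly
$$
\Delta_\a(F.G) = (\Delta_\a F).G + F.(\Delta_\a G) + (\Delta_\a F).(\Delta_\a G).
$$
Thus $\Delta_\a$ is a "twisted" Leibniz operator, the extra term $(\Delta_\a F).(\Delta_\a G)$ being precisely the defect from $\Sigma_\a$ being a homomorphism rather than $\Delta_\a$ being a derivation.

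The only place requiring any care — the "hard part", though it is genuinely routine — is the justification of $\Sigma_\a(F.G) = \Sigma_\a(F).\Sigma_\a(G)$; once stated at the level of the homotopy‑invariant continuation operators $\sigma_{[\g]}$ introduced above (which are manifestly ring homomorphisms on germs), the rest is purely formal and needs no analytic estimates or hypotheses beyond isolatedness of the singularities.
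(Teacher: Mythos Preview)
Your proof is correct and follows exactly the same route as the paper: the paper's proof simply states that by analytic continuation $\Sigma_\a(F.G)=\Sigma_\a(F).\Sigma_\a(G)$ and then uses $\Sigma_\a=I+\Delta_\a$ to conclude. You have merely expanded on why the multiplicativity of $\Sigma_\a$ holds, which is a welcome clarification but not a different approach.
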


\begin{proof}
It follows by analytic continuation that 
$$
\Sigma_\a (F.G) = \Sigma_\a (F).\Sigma_\a (G)
$$
and using $\Sigma_\a =I+\Delta_\a$ gives the result.
\end{proof}

\begin{corollary} \label{cor:representation}
If $\Delta_\a^2 (F)=0$, in particular when the monodromy of $F$ at $\a$ is holomorphic or meromorphic, then 
$$
\Delta_\a \Big (F-\frac{1}{2\pi i} \log(z-\a) \, \Delta_\a (F) \Big ) =0
$$
In that case we can write,
$$
F(z)=F_0(z) + \frac{1}{2\pi i} \log(z-\a) \, \Delta_\a (F)
$$
where $\Delta_\a F_0=0$. 
\end{corollary}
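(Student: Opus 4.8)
The plan is to verify the key identity $\Delta_\a\bigl(F - \frac{1}{2\pi i}\log(z-\a)\,\Delta_\a F\bigr) = 0$ by a direct computation, using only three facts already available: linearity of the monodromy operator $\Delta_\a$ on $V_\a$, the elementary identity $\Delta_\a\log(z-\a) = 2\pi i$ (the normalization fixed in the logarithm example above), and the Leibniz-type rule of Proposition \ref{prop:product}. Everything then rearranges into the stated representation.

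First I would note that both factors $\log(z-\a)$ and $\Delta_\a F$ lie in $V_\a$: the logarithm has an isolated singularity at $\a$, and $\Delta_\a F$ has at worst an isolated singularity there since $\a$ is an isolated singularity of $F$; hence their product does too and Proposition \ref{prop:product} applies to it. Writing $H = \Delta_\a F$ and applying that proposition to $\log(z-\a)\cdot H$,
\[
\Delta_\a\bigl(\log(z-\a)\,H\bigr) = \bigl(\Delta_\a\log(z-\a)\bigr)H + \log(z-\a)\,\Delta_\a H + \bigl(\Delta_\a\log(z-\a)\bigr)\Delta_\a H = 2\pi i\,H + \log(z-\a)\,\Delta_\a^2 F + 2\pi i\,\Delta_\a^2 F.
\]
The hypothesis $\Delta_\a^2 F = 0$ annihilates the last two terms, so $\Delta_\a\bigl(\frac{1}{2\pi i}\log(z-\a)\,\Delta_\a F\bigr) = \Delta_\a F$, and linearity of $\Delta_\a$ gives $\Delta_\a\bigl(F - \frac{1}{2\pi i}\log(z-\a)\,\Delta_\a F\bigr) = \Delta_\a F - \Delta_\a F = 0$. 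Setting $F_0 = F - \frac{1}{2\pi i}\log(z-\a)\,\Delta_\a F$ we obtain $\Delta_\a F_0 = 0$, i.e. $F_0$ is single-valued in a small punctured disc about $\a$ and thus has a uniform isolated singularity there, which is exactly the asserted decomposition; uniqueness is seen by applying $\Delta_\a$ to any representation $F = \widetilde F_0 + \frac{1}{2\pi i}\log(z-\a)\,\widetilde H$ with $\Delta_\a\widetilde F_0 = 0$ and $\widetilde H$ single-valued, which forces $\widetilde H = \Delta_\a F$ and hence $\widetilde F_0 = F_0$.

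Finally, the two ``in particular'' clauses are immediate: a germ that is holomorphic, or meromorphic with a uniform isolated singularity, at $\a$ has trivial monodromy there, so if $\Delta_\a F$ is of either kind then $\Delta_\a^2 F = \Delta_\a(\Delta_\a F) = 0$ and the argument above applies. The computation itself is routine; the only point meriting care --- and the closest thing here to an obstacle --- is bookkeeping about domains: one must fix once and for all a punctured neighborhood of $\a$ small enough to contain no other singularity of $F$ (possible since $\a$ is isolated), carry out all the analytic continuations there, and check that $\log(z-\a)\,\Delta_\a F$ genuinely defines an element of $V_\a$ so that Proposition \ref{prop:product} and the relation $\Sigma_\a = I + \Delta_\a$ apply verbatim.
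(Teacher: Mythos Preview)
Your proof is correct and follows essentially the same approach as the paper: both apply the product rule of Proposition~\ref{prop:product} to $\frac{1}{2\pi i}\log(z-\a)\cdot\Delta_\a F$, use $\Delta_\a\bigl(\frac{1}{2\pi i}\log(z-\a)\bigr)=1$, and invoke $\Delta_\a^2 F=0$ to kill the remaining terms. Your additional remarks on domains, uniqueness, and the ``in particular'' clause are reasonable elaborations but not present in the paper's terse proof.
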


\begin{proof}
We observe  that 
$$
\Delta_\a \left ( \frac{1}{2\pi i} \log(z-\a)\right ) =1
$$
and then from the previous Proposition we get
\begin{align*}
\Delta_\a \Big (F-\frac{1}{2\pi i} \log(z-\a) \, \Delta_\a (F) \Big ) &= \Delta_\a (F)- 1.\Delta_\a (F)- \frac{1}{2\pi i} \log(z-\a) . \Delta_\a^2 (F) - 1. \Delta_\a^2 (F)\\
&=\Delta_\a (F) -\Delta_\a (F)-0-0\\
&=0
\end{align*}

\end{proof}

\begin{corollary}
We assume that $\Delta_\a^2 F=0$. The singularity $\a$ of $F$ is integrable if 
and only if the singularity is totally holomorphic, that is,
$\Delta_\a F$  and $F_0$ are both holomorphic at $\a$. 
\end{corollary}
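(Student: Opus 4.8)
The plan is to deduce both implications from the canonical decomposition
$F = F_0 + \frac{1}{2\pi i}\log(z-\a)\,\Delta_\a F$ of Proposition \ref{prop_decomposition} (available because $\Delta_\a^2 F = 0$), combined with Proposition \ref{prop:holomorphic+integrable}, the integrability of $\log(z-\a)$ at $\a$, and the fact recorded above that the integrable germs at $\a$ form an $\cO_\a$-module, in particular a $\CC$-vector space containing $\cO_\a$ (holomorphic germs being trivially integrable, since $(z-\a)F(z)\to 0$ when $F$ is holomorphic at $\a$).

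For the implication ``totally holomorphic $\Rightarrow$ integrable'', suppose $\Delta_\a F \in \cO_\a$ and $F_0 \in \cO_\a$. Then $F_0$ is integrable, and since $\log(z-\a)$ is integrable and $\Delta_\a F \in \cO_\a$, the product $\frac{1}{2\pi i}\log(z-\a)\,\Delta_\a F$ is integrable by the $\cO_\a$-module property; hence $F = F_0 + \frac{1}{2\pi i}\log(z-\a)\,\Delta_\a F$ is integrable. Equivalently, one checks directly that $(z-\a)F_0(z)\to 0$ and $(z-\a)\log(z-\a)\,\Delta_\a F(z)\to 0$ as $z\to\a$ in any Stolz angle, the second because $(z-\a)\log(z-\a)\to 0$ while $\Delta_\a F$ stays bounded near $\a$.

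For the converse, assume $\a$ is an integrable singularity of $F$. I first show $\Delta_\a F \in \cO_\a$. Since $\Delta_\a^2 F = 0$, the germ $\Delta_\a F = F_+ - F$ is single-valued near $\a$ (it has at worst a uniform isolated singularity there), so Proposition \ref{prop:holomorphic+integrable} is applicable to it. It is moreover integrable at $\a$: the branch $F_+$ is obtained from $F$ by the substitution $\arg(z-\a)\mapsto \arg(z-\a)+2\pi$, which preserves boundedness of $\arg(z-\a)$, so $\Slim_{z\to\a}(z-\a)F_+(z) = 0$ follows from $\Slim_{z\to\a}(z-\a)F(z) = 0$; hence $\Delta_\a F = F_+ - F$ is a difference of integrable germs, so is integrable. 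Proposition \ref{prop:holomorphic+integrable} now gives $\Delta_\a F \in \cO_\a$. Next, $F_0 = F - \frac{1}{2\pi i}\log(z-\a)\,\Delta_\a F$ is integrable, being the difference of the integrable germ $F$ and the product $\frac{1}{2\pi i}\log(z-\a)\,\Delta_\a F$, which is integrable since $\log(z-\a)$ is integrable and $\Delta_\a F$ is now known to lie in $\cO_\a$. Since $\Delta_\a F_0 = 0$ by Proposition \ref{prop_decomposition}, a second application of Proposition \ref{prop:holomorphic+integrable}, to $F_0$, yields $F_0 \in \cO_\a$. Thus $\a$ is totally holomorphic.

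The only step that is not pure bookkeeping is the invariance of integrability under the monodromy, namely that $F_+$ is again integrable at $\a$ once $F$ is; the rest is the assembly of Proposition \ref{prop_decomposition}, two applications of Proposition \ref{prop:holomorphic+integrable} (first to $\Delta_\a F$, then to $F_0$, checking in each case the required hypothesis that the monodromy vanishes), and the vector-space and $\cO_\a$-module structure of the integrable germs. I expect the subtlety about $F_+$ — interpreting ``integrable'' for a multivalued germ and observing that shifting the argument by $2\pi$ keeps it bounded — to be the point needing the most care to state cleanly.
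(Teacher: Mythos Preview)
Your proof is correct and follows essentially the same approach as the paper: both directions use the decomposition of Proposition~\ref{prop_decomposition}, two applications of Proposition~\ref{prop:holomorphic+integrable} (to $\Delta_\a F$ and then to $F_0$), and the integrability of $\log(z-\a)$ together with the vector-space/$\cO_\a$-module structure of integrable germs. Your write-up is in fact slightly more explicit than the paper's in justifying why $F_+$ remains integrable (the paper just says this follows ``directly from the definition''), but the logical skeleton is identical.
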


\begin{proof}
The condition is necessary. 
If $F$ is integrable at $\a$, directly from the definition we get that $F_+$ is integrable, hence $\Delta_\a F=F_+-F$ is integrable and holomorphic in a pointed 
neighborhood of $\a$. Proposition \ref{prop:holomorphic+integrable} implies that $\Delta_\a F$ is a holomorphic function at $\a$. Then also 
$\frac{1}{2\pi i} \log(z-\a) \, \Delta_\a (F)$ is integrable at $\a$, hence $F_0(z) =F(z)-\frac{1}{2\pi i} \log(z-\a) \, \Delta_\a (F)$ is also integrable.
Again, by Proposition \ref{prop:holomorphic+integrable}, $F_0$ being holomorphic in a pointed neighborhood and integrable, it is holomorphic.

The condition is sufficient. We assume that $F_0$ and $\Delta_\a F$ are both 
holomorphic at $\a$. We have that $\frac{1}{2\pi i} \log(z-\a) \, \Delta_\a (F)$ is integrable because
$r\log r $ is integrable on $\RR_+$ at $r=0$. Also, $F_0$ is integrable since is it holomorphic at $\a$. Now, adding these two integrable functions it follows that $F$ is integrable.
\end{proof}

\medskip

Now, observe that we have $F_+=F+\Delta_\a F$, hence
$$
(F_+)'=F'+(\Delta_\a F)'
$$
and since $(F')_+ =(F_+)'$, which is obvious by analytic continuation, then we conclude that the monodromy  operator $\Delta_\alpha$ 
commutes with the derivation.

\begin{proposition}\label{prop:commuting}
We have 
$$
\Delta_\a (F')=(\Delta_\a F)'
$$
\end{proposition}

\begin{corollary}
Let $\vartheta = P(D)=\sum_{n=0}^N a_n D^n$ with $D=d/dz$ be a differential operator with coefficients
$(a_n)$ with isolated singularities without monodromy. We have
$$
\Delta_\a (\vartheta F)=\vartheta (\Delta_\a F) \ .
$$
Therefore, if $F$ satisfies the differential equation $\vartheta F =0$ then the monodromy $\Delta_\a F$ satisfies the same differential 
equation
$$
\vartheta (\Delta_\a F) =0 
$$
and $\Delta_\a F$ belongs to the finite dimensional vector space of solutions. We have the same result for a system of differential equations. 
\end{corollary}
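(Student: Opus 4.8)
The plan is to prove the corollary by reducing it to Proposition \ref{prop:commuting} (the commutation of $\Delta_\a$ with $D=d/dz$) together with the fact that multiplication by a function without monodromy commutes with $\Delta_\a$. First I would establish the auxiliary fact: if $a$ is a function holomorphic in a pointed neighborhood of $\a$ with $\Delta_\a a = 0$, then for any $F\in V_\a$ we have $\Delta_\a(a\,F) = a\,\Delta_\a F$. This follows from Proposition \ref{prop:product}, which gives $\Delta_\a(a\,F) = \Delta_\a(a)\cdot F + a\cdot\Delta_\a(F) + \Delta_\a(a)\cdot\Delta_\a(F)$; setting $\Delta_\a a = 0$ kills the first and third terms and leaves $a\,\Delta_\a F$. (One should check $a\,F$ still has at worst an isolated singularity at $\a$, i.e. lies in $V_\a$, which is immediate since $a$ is holomorphic in a pointed neighborhood.)

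Next I would assemble the operator $\vartheta = \sum_{n=0}^N a_n D^n$ from these two building blocks. By Proposition \ref{prop:commuting}, $\Delta_\a D^n = D^n \Delta_\a$ by an immediate induction on $n$. Combining with the multiplication fact applied to each coefficient $a_n$, we get $\Delta_\a(a_n D^n F) = a_n D^n \Delta_\a F$ for each $n$, and then summing over $n$ (using linearity of $\Delta_\a$, which is part of its definition as the monodromy operator on $V_\a$) yields $\Delta_\a(\vartheta F) = \vartheta(\Delta_\a F)$. The hypothesis that the coefficients $a_n$ have isolated singularities without monodromy is exactly what is needed to invoke the multiplication fact at each step.

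The consequence for solutions is then formal: if $\vartheta F = 0$, applying $\Delta_\a$ to both sides and using the commutation identity gives $\vartheta(\Delta_\a F) = \Delta_\a(\vartheta F) = \Delta_\a 0 = 0$, so $\Delta_\a F$ is again a solution, hence lies in the finite-dimensional solution space of $\vartheta$ near a point where the coefficients are regular. For a system $D Y = A(z) Y$ (or more generally $\sum a_n D^n$ acting componentwise coupled by a matrix), the same argument applies entrywise: $\Delta_\a$ commutes with $D$ componentwise and with multiplication by the entries of $A$ since these have no monodromy, so $\Delta_\a Y$ solves the same system.

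The main obstacle — really the only subtlety — is bookkeeping about domains: one must ensure that at every stage the function being differentiated or multiplied genuinely lies in the space $V_\a$ on which $\Delta_\a$ is defined, i.e. that forming $D^n F$ and multiplying by $a_n$ does not create a non-isolated singularity at $\a$. Since $D$ preserves the property of having an isolated singularity, and the $a_n$ are holomorphic in a pointed neighborhood of $\a$, this is routine but should be mentioned. No analytic estimates or growth conditions enter; the whole argument is an algebraic consequence of Propositions \ref{prop:product} and \ref{prop:commuting}.
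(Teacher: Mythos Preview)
Your proof is correct and follows essentially the same approach as the paper: both use Proposition~\ref{prop:product} (with $\Delta_\a a_n = 0$) to get $\Delta_\a(a_n F^{(n)}) = a_n \Delta_\a(F^{(n)})$, then Proposition~\ref{prop:commuting} to commute $\Delta_\a$ past the derivatives, and finally sum by linearity. The paper's version is simply a compressed one-line statement of the same argument, without your explicit domain bookkeeping or discussion of systems.
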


\begin{proof}
Using Proposition \ref{prop:product} we have that 
$$
\Delta_\a (a_n F^{(n)})=a_n \Delta_\a (F^{(n)})=a_n \Delta_\a (F)^{(n)}
$$
and the result follows.
\end{proof}

The monodromy $\Delta_\a F$ can have an isolated singularity at $\a$ with non-trivial monodromy. For example this happens at singular 
points of differential equations, and of course for algebraic functions. For these we have the simple, but important, observation:

\begin{proposition}\label{prop:algebraic}
We assume that $F$ is an algebraic function satisfying the algebraic equation
$$
P(z, F)=0
$$
with $P\in \CC[x,y]$. Then $\Delta_\a F$ is an algebraic function satisfying an algebraic equation of lower degree
$$
Q(z, \Delta_\a F)=0
$$
with $Q\in \CC[x,y]$ and $\deg_y Q < \deg_y P$
\end{proposition}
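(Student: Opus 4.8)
The plan is to exploit the single structural fact that analytic continuation around $\a$ sends $F$ to another branch of the \emph{same} algebraic function. Concretely, $F_+=\Sigma_\a F$ is the Weierstrass continuation of $F$ along a loop encircling $\a$ once, so $F_+$ again satisfies $P(z,F_+)=0$; hence $\Delta_\a F=F_+-F$ is a difference of two algebraic functions and is therefore algebraic. This settles the qualitative part. The quantitative part — producing $Q$ with $\deg_y Q<\deg_y P$ — is where the actual content lies, and I would attack it by Taylor-expanding $P$ in its second slot around $y=F$.

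Writing $d=\deg_y P$ and using $P(z,F)=0$,
\begin{equation*}
0 = P(z,F+\Delta_\a F) = \sum_{j=1}^{d}\frac{1}{j!}\,\partial_y^{\,j}P(z,F)\,(\Delta_\a F)^{j}.
\end{equation*}
If $\Delta_\a F\equiv 0$ there is nothing to prove (take $Q(x,y)=y$). Otherwise cancel one factor of $\Delta_\a F$ to get
\begin{equation*}
\sum_{j=1}^{d}\frac{1}{j!}\,\partial_y^{\,j}P(z,F)\,(\Delta_\a F)^{j-1} = 0,
\end{equation*}
a relation of $w$-degree $d-1$ between $\Delta_\a F$ and $F$ with coefficients in $\CC[z,F]$, and with top coefficient $\frac{1}{d!}\partial_y^{\,d}P(z,F)$ equal to the leading coefficient of $P$ in $y$, a nonzero polynomial in $z$ alone. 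Eliminating $F$ against $P(z,F)=0$, for instance by setting $Q(z,w):=\Res_F\!\big(P(z,F),\ \sum_{j=1}^{d}\tfrac{1}{j!}\partial_y^{\,j}P(z,F)\,w^{j-1}\big)\in\CC[z,w]$, gives a genuine polynomial with $Q(z,\Delta_\a F)=0$, since $F$ is a common root of the two entries. A cosmetically different but equivalent route is to use that $P(z,y)$ and $P(z,y+w)$ share the root $y=F$ when $w=\Delta_\a F$, so $\Res_y(P(z,y),P(z,y+w))$ vanishes there, and this resultant is visibly divisible by $w^{d}$, which one removes.

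The main obstacle is the degree bookkeeping. The naive resultant above is, up to the leading factor, a product $\prod_{\rho}R(z,\rho,w)$ over all $d$ branches $\rho$ of $F$, so its $w$-degree is $d(d-1)$ — far above the target. The improvement must come from the observation that $\Delta_\a F$ is already a root of the \emph{single} factor $R(z,F,w)=\sum_{j\ge1}\tfrac{1}{j!}\partial_y^{\,j}P(z,F)\,w^{j-1}$ of $w$-degree $d-1$ attached to the specific branch $F$; the real work is to descend its coefficients from $\CC(z)(F)$ down to $\CC(z)$ without reinflating the degree. This is exactly the place where one must use that $F_+$ arises from monodromy around a \emph{single} isolated singularity, so that the local monodromy permutation at $\a$ is a product of disjoint cycles and $F_+-F$ only involves the one cycle through $F$; the minimal polynomial of $\Delta_\a F$ is then governed by that cycle rather than by all $d$ branches. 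I expect this descent, together with the careful argument that the degree strictly drops (rather than merely staying $\le d$), to be the delicate heart of the proof, while the passage "branch $\Rightarrow$ algebraic" and the degree-$(d-1)$ relation over $\CC(z)(F)$ are routine.
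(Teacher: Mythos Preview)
Your opening steps coincide exactly with the paper's proof: analytically continue around $\a$ to get $P(z,F+\Delta_\a F)=0$, Taylor-expand in the second slot, cancel $P(z,F)=0$, and factor out one power of $\Delta_\a F$. The paper stops right there and declares the resulting degree-$(d-1)$ expression to be $Q$; it does \emph{not} attempt to eliminate $F$ from the coefficients. So what you call the ``routine'' part of your proposal is the entire argument the paper gives.

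You are right to be uneasy about the elimination, and in fact the stronger statement you are chasing --- a $Q\in\CC[x,y]$ with $\deg_y Q<\deg_y P$ --- fails in general. Take $P(z,y)=y^3-z$ and $F=z^{1/3}$ at $\a=0$: then $\Delta_0 F=(\omega-1)\,z^{1/3}$ with $\omega=e^{2\pi i/3}$, and $\CC(z)(\Delta_0 F)=\CC(z^{1/3})$ has degree $3$ over $\CC(z)$, so no nonzero $Q\in\CC[z,y]$ of $y$-degree $<3$ can annihilate $\Delta_0 F$. The paper's $Q$ is therefore to be read as a polynomial in $\Delta_\a F$ with coefficients in $\CC[z,F]$ (equivalently, an algebraic relation of degree $d-1$ over $\CC(z)(F)$), and the clause ``$Q\in\CC[x,y]$'' in the statement should not be taken literally. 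Your resultant construction and cycle analysis are sound observations, but they are aimed at a sharpening that the paper neither proves nor, as the example shows, can hold as written.
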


\begin{proof}
By analytic continuation around $\a$ we have
$$
P(z, F+\Delta_\a F)=0
$$
developing the powers of $F+\Delta_\a F$, using $P(z, F)=0$, and factoring out $\Delta_\a F$ we get an explicit expression for $Q$ that has a lower degree in the second variable.
\end{proof}

 We will use the following change of variables formula.

\begin{proposition}[Change of variables formula]\label{prop:change_variables}
Let $\varphi: (\CC, \beta)\to (\CC, \alpha)$ a local holomorphic diffeomorphism, $\alpha =\varphi(\beta)$. We have
$$
\Delta_{\varphi^{-1}(\alpha)} (F\circ \varphi)= \Delta_{\alpha}(F) \circ \varphi \ .
$$
\end{proposition}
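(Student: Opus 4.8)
The plan is to argue purely by analytic continuation, tracking what happens to a Weierstrass germ as it winds once around the relevant singular point, and using that $\varphi$ is a local biholomorphism (hence invertible and respecting small loops). First I would set up notation: write $\psi=\varphi^{-1}$, so $\psi:(\CC,\alpha)\to(\CC,\beta)$ is a local holomorphic diffeomorphism with $\psi(\alpha)=\beta$. Since $\varphi$ is biholomorphic near $\beta$, it maps a small positively oriented loop $\gamma$ around $\beta$ to a small positively oriented loop $\varphi\circ\gamma$ around $\alpha=\varphi(\beta)$, and this loop still has winding number $+1$ about $\alpha$ (a local diffeomorphism between planar domains preserves orientation, or one restricts to the case where $\varphi$ is orientation-preserving, which is the holomorphic case). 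The key point is then that analytic continuation is natural with respect to holomorphic coordinate changes: if $F$ is a holomorphic germ at $\alpha$ with an isolated singularity there, then the Weierstrass continuation of $F\circ\varphi$ along $\gamma$ equals the composition with $\varphi$ of the Weierstrass continuation of $F$ along $\varphi\circ\gamma$. In the operator notation of the paper, $\sigma_{[\gamma]}(F\circ\varphi)=\big(\sigma_{[\varphi\circ\gamma]}F\big)\circ\varphi$.

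Next I would translate this into the $\Sigma$ and $\Delta$ operators. Because $\gamma$ is a loop of winding number $1$ about $\beta=\varphi^{-1}(\alpha)$, the left-hand side is $\Sigma_{\varphi^{-1}(\alpha)}(F\circ\varphi)$; because $\varphi\circ\gamma$ is a loop of winding number $1$ about $\alpha$, the right-hand side is $\big(\Sigma_\alpha F\big)\circ\varphi$. Hence
\[
\Sigma_{\varphi^{-1}(\alpha)}(F\circ\varphi)=\big(\Sigma_\alpha F\big)\circ\varphi .
\]
Now subtract $F\circ\varphi$ from both sides. On the left, $\Sigma_{\varphi^{-1}(\alpha)}(F\circ\varphi)-F\circ\varphi=\Delta_{\varphi^{-1}(\alpha)}(F\circ\varphi)$ by definition of the monodromy operator. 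On the right, $\big(\Sigma_\alpha F\big)\circ\varphi-F\circ\varphi=\big(\Sigma_\alpha F-F\big)\circ\varphi=\big(\Delta_\alpha F\big)\circ\varphi$, using that precomposition with a fixed function is linear. This gives exactly $\Delta_{\varphi^{-1}(\alpha)}(F\circ\varphi)=\big(\Delta_\alpha F\big)\circ\varphi$, as claimed. One should also note that $F\circ\varphi$ does have an isolated singularity (or a regular point) at $\varphi^{-1}(\alpha)$ precisely because $\varphi$ is a local diffeomorphism taking a punctured neighborhood of $\varphi^{-1}(\alpha)$ biholomorphically onto a punctured neighborhood of $\alpha$, so the operator $\Delta_{\varphi^{-1}(\alpha)}$ is indeed defined on $F\circ\varphi$.

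The main obstacle, such as it is, is making the naturality statement "continuation of $F\circ\varphi$ along $\gamma$ is the composition with $\varphi$ of the continuation of $F$ along $\varphi\circ\gamma$" fully rigorous rather than merely plausible; this is a standard fact about sheaves of holomorphic functions and unique continuation along paths, proved by covering $\gamma$ with finitely many disks on which one has honest holomorphic representatives and transporting them through $\varphi$ disk by disk. I would state it cleanly as a lemma (or simply cite it as the functoriality of Weierstrass continuation under biholomorphisms) and then the rest is the two-line algebra above. A minor point worth a sentence is the orientation: since $\varphi$ is holomorphic with $\varphi'(\beta)\neq 0$, it is orientation-preserving, so a counterclockwise small loop about $\beta$ goes to a counterclockwise small loop about $\alpha$, which is why positive monodromy is sent to positive monodromy with no sign change.
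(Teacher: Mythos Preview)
Your proof is correct and follows essentially the same approach as the paper: both argue by transporting a winding-number-one loop through the local diffeomorphism $\varphi$, invoking naturality of analytic continuation to get $\Sigma_{\beta}(F\circ\varphi)=(\Sigma_\alpha F)\circ\varphi$, and then subtracting $F\circ\varphi$. The only cosmetic difference is that the paper starts with a loop around $\alpha$ and pulls it back via $\varphi^{-1}$, whereas you start with a loop around $\beta$ and push it forward via $\varphi$; your added remarks on orientation and on $F\circ\varphi$ having an isolated singularity at $\beta$ are welcome refinements.
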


\begin{proof}
We consider a local loop $\gamma$ enclosing $\a$ with winding number $1$ with respect to $\a$. 
Its pre-image $\varphi^{-1}(\gamma)$ has winding number $1$ with respect to $\beta$. When we continue
analytically along $\varphi^{-1}(\gamma )$, the 
return map for $F\circ \varphi$ is $F\circ \varphi + \Delta_\beta (F\circ \varphi)$ by definition of the monodromy. The return 
analytic continuation of $F$ along $\gamma$ is $F+\Delta_{\a}F$, hence we have
$$
F\circ \varphi + \Delta_\beta (F\circ \varphi)= \left (F+\Delta_{\a}(F)\right )\circ \varphi = F\circ \varphi + \Delta_{\a} (F)\circ \varphi 
$$
and we get the stated formula.
\end{proof}

\begin{corollary}\label{cor:change_1/x}
Let $z_0\in \CC^*$. We have 
$$
\Delta_\beta (F(z_0/z))= \Delta_{z_0/\beta}(F) (z_0/z) \ .
$$
\end{corollary}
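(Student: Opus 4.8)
The plan is to obtain this as a direct specialization of the change of variables formula (Proposition~\ref{prop:change_variables}), taking for $\varphi$ the inversion-type map $\varphi(z) = z_0/z$.

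First I would record the elementary properties of $\varphi$ that we need: for fixed $z_0 \in \CC^*$, the map $\varphi : \CC^* \to \CC^*$, $\varphi(z) = z_0/z$, is holomorphic, globally bijective, and an involution, $\varphi \circ \varphi = \id$; moreover $\varphi'(\beta) = -z_0/\beta^2 \neq 0$ for every $\beta \in \CC^*$, so $\varphi$ is a local holomorphic diffeomorphism near any nonzero point, with $\varphi^{-1} = \varphi$. Hence Proposition~\ref{prop:change_variables} is applicable to this $\varphi$.

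Then I would apply that proposition with $\alpha = z_0/\beta$, a point of $\CC^*$ at which $F$ has a regular point or an isolated singularity. Since $\varphi^{-1}(\alpha) = z_0/\alpha = z_0/(z_0/\beta) = \beta$, the formula reads
$$
\Delta_\beta(F \circ \varphi) \;=\; \Delta_{\varphi^{-1}(\alpha)}(F\circ\varphi) \;=\; \Delta_\alpha(F)\circ\varphi \;=\; \Delta_{z_0/\beta}(F)\circ\varphi .
$$
Unwinding the composition with $\varphi$, i.e.\ using $(F\circ\varphi)(z) = F(z_0/z)$ and $\bigl(\Delta_{z_0/\beta}(F)\circ\varphi\bigr)(z) = \Delta_{z_0/\beta}(F)(z_0/z)$, yields precisely the asserted identity $\Delta_\beta(F(z_0/z)) = \Delta_{z_0/\beta}(F)(z_0/z)$.

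The argument is almost purely formal once Proposition~\ref{prop:change_variables} is in hand, so I do not anticipate a genuine obstacle; the only point needing a word of care is the implicit hypothesis $\beta \in \CC^*$, which is exactly what makes $\varphi$ a biholomorphism near $\beta$ and guarantees that a small positively oriented loop around $\beta$ is carried by $\varphi$ to a small positively oriented loop around $z_0/\beta$, so that the two monodromy operators correspond as required in the proof of the change of variables formula.
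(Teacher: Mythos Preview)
Your proof is correct and follows exactly the same route as the paper: apply Proposition~\ref{prop:change_variables} with the involution $\varphi(z)=z_0/z$, noting $\varphi^{-1}=\varphi$. You have simply added a few more details (checking $\varphi'(\beta)\neq 0$ and spelling out the substitution $\alpha=z_0/\beta$), but the argument is the same.
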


\begin{proof}
With the change of variables  $\varphi(z)=z_0/z$, thus $\varphi^{-1}(z)=z_0/z$, we get the result using Proposition  \ref{prop:change_variables}.
\end{proof}

The formula in this Corollary remains valid for the monodromy at $\infty$ in the Riemann sphere, $\alpha =\infty \in \overline{\CC}$. 
Thus, the formula holds in general for a Mo\"ebius transformation $\varphi$.

\medskip

This result is a particular case for $n=1$ (local diffeomorphism) of the local degree $n\geq 1$ case (we 
will not use this more general result).

\begin{proposition}\label{prop:degree_n}
Let $\varphi: (\CC, \beta)\to (\CC, \alpha)$ a local holomorphic map with $\beta$ a critical point of degree $n\geq1$ 
(hence $\varphi$ is of local degree $n$). We have
$$
\Delta_{\alpha}(F) \circ \varphi = \sum_{k=1}^n \binom{n}{k} \Delta_{\beta}^k (F\circ \varphi)   
$$
\end{proposition}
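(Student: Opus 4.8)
The plan is to reduce everything to a single structural fact about how $\varphi$ acts on winding numbers. Since $\beta$ is a critical point of degree $n$, in suitable local coordinates $\varphi$ is the model $z\mapsto z^{n}$, so a small positively oriented loop $\gamma$ around $\beta$ has image $\varphi\circ\gamma$ winding exactly $n$ times around $\alpha=\varphi(\beta)$. First I would make this precise exactly as in the proof of Proposition \ref{prop:change_variables}: continuing a germ $F$ at $\alpha$ analytically along $\varphi\circ\gamma$ and precomposing with $\varphi$ computes the return map of $F\circ\varphi$ along $\gamma$. Writing $\varphi^{*}F:=F\circ\varphi$, this yields the conjugation relation
$$
\Sigma_{\beta}\,\varphi^{*}=\varphi^{*}\,\Sigma_{\alpha}^{\,n},
$$
the degree-$n$ analogue of the case $n=1$ and the engine of the whole argument.

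Next I would substitute $\Sigma_{\beta}=I+\Delta_{\beta}$ and $\Sigma_{\alpha}=I+\Delta_{\alpha}$ and expand by the binomial theorem. Because $I$ and $\Delta_{\alpha}$ commute, $\Sigma_{\alpha}^{\,n}=\sum_{k=0}^{n}\binom{n}{k}\Delta_{\alpha}^{k}$, and subtracting the $k=0$ term $\varphi^{*}$ from both sides of the conjugation relation gives immediately
$$
\Delta_{\beta}(F\circ\varphi)=\sum_{k=1}^{n}\binom{n}{k}\,(\Delta_{\alpha}^{k}F)\circ\varphi .
$$
This is the clean, purely formal consequence of the covering relation, and it specializes to Proposition \ref{prop:change_variables} when $n=1$.

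The step I expect to be the main obstacle is the passage from this identity to the orientation displayed in the statement, i.e. expressing $(\Delta_{\alpha}F)\circ\varphi$ as a binomial combination of the iterates $\Delta_{\beta}^{k}(F\circ\varphi)$. Iterating the conjugation relation gives $\Delta_{\beta}^{k}\varphi^{*}=\varphi^{*}(\Sigma_{\alpha}^{\,n}-I)^{k}$, whence
$$
\sum_{k=1}^{n}\binom{n}{k}\Delta_{\beta}^{k}(F\circ\varphi)=\varphi^{*}\Big(\sum_{k=1}^{n}\binom{n}{k}(\Sigma_{\alpha}^{\,n}-I)^{k}F\Big)=\varphi^{*}\big((\Sigma_{\alpha}^{\,n^{2}}-I)F\big),
$$
whereas the left-hand side of the statement is $\varphi^{*}\big((\Sigma_{\alpha}-I)F\big)$. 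These coincide only when $\Sigma_{\alpha}^{\,n^{2}}=\Sigma_{\alpha}$, that is for $n=1$; already on the model $\varphi(z)=z^{2}$ with $F=\log$ one finds $(\Delta_{\alpha}F)\circ\varphi=2\pi i$ while $\sum_{k=1}^{2}\binom{2}{k}\Delta_{\beta}^{k}(F\circ\varphi)=8\pi i$. This is precisely the gap a direct inversion of the binomial relation cannot bridge, since recovering $\Delta_{\alpha}$ from $\Delta_{\beta}$ would amount to extracting an $n$-th root of $\Sigma_{\alpha}^{\,n}$, which is not polynomial in $\Delta_{\beta}$. Consequently the robust general identity is the reversed one proved above, with the roles of the fine singularity $\beta$ and the coarse singularity $\alpha$ interchanged relative to the display, and the displayed formula should be read in that corrected orientation (reducing to Proposition \ref{prop:change_variables} when $n=1$).
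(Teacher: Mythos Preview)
Your analysis is correct, and you have in fact caught an error in the proposition as stated. The paper's own proof runs the same covering argument you set up---compare the analytic continuation of $F$ along a loop near $\alpha$ with that of $F\circ\varphi$ along the corresponding loop near $\beta$---but it asserts that the preimage under $\varphi$ of a simple loop $\gamma$ around $\alpha$ has winding number $n$ around $\beta$. For a degree-$n$ branched cover this is backwards: in the local model $\varphi(z)=z^n$ the lift of a simple loop around $\alpha$ is not even closed (it is an arc of angle $2\pi/n$), whereas a simple loop around $\beta$ pushes \emph{forward} under $\varphi$ to a loop of winding number $n$ around $\alpha$. Your conjugation relation $\Sigma_\beta\,\varphi^*=\varphi^*\,\Sigma_\alpha^{\,n}$ is therefore the right one, and it gives exactly the identity
$$
\Delta_\beta(F\circ\varphi)=\sum_{k=1}^{n}\binom{n}{k}\,(\Delta_\alpha^{k}F)\circ\varphi,
$$
with $\alpha$ and $\beta$ interchanged relative to the printed display.

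Your counterexample $\varphi(z)=z^2$, $F=\log$ settles the matter: $(\Delta_0 F)\circ\varphi=2\pi i$ while $2\Delta_0(F\circ\varphi)+\Delta_0^2(F\circ\varphi)=8\pi i$, so the formula as written already fails for $n=2$. Your computation $\sum_{k=1}^n\binom{n}{k}\Delta_\beta^k\,\varphi^*=\varphi^*(\Sigma_\alpha^{n^2}-I)$ is also correct and explains neatly why no polynomial inversion can rescue the stated orientation: one would need an $n$-th root of $\Sigma_\alpha^{\,n}$. The author notes just before the proposition that ``we will not use this more general result,'' so the slip has no effect on the rest of the paper; but the displayed formula should be read in the corrected orientation you give.
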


\begin{lemma}\label{lem:degree_n}
Let $\gamma$ be a local loop with winding number  $n\geq 1$ with respect to $\beta\in \CC$ in a neighborhood where $\beta$ is the only singularity of $F$
in this neighborhood. 
We denote $\Delta_\beta^{(n)}F$ the monodromy of $F$ along $\gamma$ that only depends on the homotopy class $[\gamma]$, i.e. on its winding number $n\geq 1$. 
We have
$$
\Delta_\beta^{(n)} F =\sum_{k=1}^{n} \binom{n}{k} \Delta_\beta^k F \ . 
$$
\end{lemma}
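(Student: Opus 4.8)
The plan is to prove the identity by induction on $n$, using as the base case $n=1$, where $\Delta_\beta^{(1)}F = \Delta_\beta F = \binom{1}{1}\Delta_\beta F$, which is trivially true. For the inductive step, I would exploit the fact that a loop with winding number $n$ around $\beta$ is homotopic (rel basepoint, in the pointed neighborhood of $\beta$) to the concatenation of a loop with winding number $n-1$ followed by a loop with winding number $1$. Since the monodromy operator along a path only depends on its homotopy class and is multiplicative under concatenation in the sense that $\sigma_{\gamma_1 \cdot \gamma_2} = \sigma_{\gamma_2}\circ \sigma_{\gamma_1}$, this translates into the operator identity $\Sigma_\beta^{(n)} = \Sigma_\beta \circ \Sigma_\beta^{(n-1)}$, where $\Sigma_\beta^{(m)} = I + \Delta_\beta^{(m)}$ denotes the return operator along a loop of winding number $m$. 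In particular $\Sigma_\beta^{(n)} = \Sigma_\beta^n$, i.e. the return operator for winding number $n$ is the $n$-th power of the elementary return operator.

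From here the computation is purely algebraic: writing $\Sigma_\beta = I + \Delta_\beta$ and expanding by the binomial theorem,
\begin{equation*}
\Sigma_\beta^{(n)} = (I+\Delta_\beta)^n = \sum_{k=0}^{n} \binom{n}{k} \Delta_\beta^k \ ,
\end{equation*}
so that
\begin{equation*}
\Delta_\beta^{(n)} F = \Sigma_\beta^{(n)}F - F = \sum_{k=1}^{n} \binom{n}{k} \Delta_\beta^k F \ ,
\end{equation*}
which is the claimed formula. One subtlety to address is that the higher powers $\Delta_\beta^k F$ must make sense, i.e. after winding once the function $F_+$ must again have (at worst) an isolated singularity at $\beta$ in the same neighborhood so that $\Delta_\beta$ can be applied again; this is exactly the hypothesis that $\beta$ is the only singularity of $F$ in the neighborhood, so all iterates $\Delta_\beta^k F$ are well defined on the relevant space $V_\beta$ of germs with an isolated singularity at $\beta$.

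The main obstacle, such as it is, is the careful bookkeeping of orientations and basepoints in the homotopy statement $\gamma \simeq \gamma_{n-1}\cdot\gamma_1$ and the corresponding composition law for analytic continuation operators — one must be consistent about whether $\sigma_{\gamma_1\cdot\gamma_2}$ equals $\sigma_{\gamma_2}\sigma_{\gamma_1}$ or $\sigma_{\gamma_1}\sigma_{\gamma_2}$. Since $\Sigma_\beta$ commutes with its own powers this ordering issue does not affect the final binomial formula, but it should be stated cleanly. Everything else reduces to the binomial expansion of $(I+\Delta_\beta)^n$ acting on $F$ and subtracting off the $k=0$ term. Note also that Proposition \ref{prop:degree_n} then follows immediately by combining this Lemma with the change of variables philosophy of Proposition \ref{prop:change_variables}: a local map $\varphi$ of degree $n$ at $\beta$ sends a small loop of winding number $1$ around $\beta$ to a loop of winding number $n$ around $\alpha=\varphi(\beta)$, so $\Delta_\alpha(F)\circ\varphi = \Delta_\beta^{(n)}(F\circ\varphi) = \sum_{k=1}^n \binom{n}{k}\Delta_\beta^k(F\circ\varphi)$.
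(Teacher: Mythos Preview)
Your proof is correct and follows essentially the same approach as the paper: the paper simply observes directly that $\Delta_\beta^{(n)} = \Sigma_\beta^n - I$ and then applies the binomial formula $\Sigma_\beta^n = (I+\Delta_\beta)^n$, whereas you spell out the justification for $\Sigma_\beta^{(n)} = \Sigma_\beta^n$ via concatenation of loops and add careful remarks on well-definedness and orientation. The extra care is welcome but the core argument is identical.
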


\begin{proof}We observe that 
$$
\Delta_\beta^{(n)} = \sigma_{n.[\gamma]}^n-I =\Sigma^n_\beta -I \ .
$$
Since $\Sigma_\beta =I+\Delta_\beta$, the result follows from Newton binomial formula.
\end{proof}

\begin{proof}[Proof of Proposition \ref{prop:degree_n}]
We carry out the same proof as before, but this time the pre-image $\varphi^{-1}(\gamma)$ has 
winding number $n\geq 1$ with respect to $\beta=\varphi^{-1}(\a)$. When we continue analytically 
along $\gamma$ the 
return map of $F$ is $F + \Delta_\alpha (F)$ by definition of the monodromy. The return 
analytic continuation of $F\circ \varphi$ along $\varphi^{-1}(\gamma)$ is 
$F\circ \varphi +\Delta^{(n)}_{\beta} (F\circ \varphi)$ 
hence we have
$$
F\circ \varphi + \Delta_\alpha (F)\circ \varphi= F\circ \varphi + \Delta_{\beta}^{(n)} (F\circ \varphi ) 
$$
so $\Delta_\alpha (F)\circ \varphi=  \Delta_{\beta}^{(n)} (F\circ \varphi)$
and the formula follows from Lemma \ref{lem:degree_n}.
\end{proof}

We leave the general study of monodromies $\Delta_\a F$ having an isolated singularity at $\a$ with non-trivial monodromy to future articles.

\section{Monodromy formula for integrable singularities.}

As a preparation for the general case, we first prove 
the monodromy formula (\ref{eq:monodromy_convolution}) in the 
integrable case. Thanks to the integrability conditions, there is no local contributions in the 
integration path argument that is central in the proof. In the next section we treat the general case.

\subsection{Proof of the totally holomorphic monodromy formula: Single singularity case} \label{sec:proof}

We prove in this section Theorems \ref{thm:Hadamard_holomorphic_monodromy} and \ref{thm:holomorphic}
in the integrable case.
The proof of Theorem \ref{thm:holomorphic} is similar and indications will be given at the end, so we concentrate in 
the proof of \ref{thm:Hadamard_holomorphic_monodromy}.

The first observation is that the result is purely local near the singularities. We may have some multiplicity when there exists
distinct pairs $(\a, \b)\not= (\a',\b')$ such that $\a\b=\a'\b'$. Then we have a linear superposition 
of the different contributions. This shows that we can be more precise in Theorems \ref{thm:Hadamard_holomorphic_monodromy} since  
there is no need to assume in the Theorem that all singularities have the same 
holomorphic structure but only those $\a$'s and $\b$'s that contribute to the location $\g=\a\b$.

We consider first the case where there is no multiplicity, that is 
$\g=\a \b$ for a unique pair $(\a, \b)$. The general case is a superposition of this case. 
We can assume that $\a =\b =\g =1$, and the radii of convergence are $R_F=R_G=1$. 

We consider a current value $z$ close to $1$, and we follow the analytic extension of $F\odot G$ when 
$z$, starting at $z_0$, turns around $1$ once in the positive direction. We can start at $z_0$  with $|z_0|<1$, and
$z_0$ close to $1$.

% \bigskip

\bigskip

\begin{figure}[h] \label{fig:onemonodromy1}
\centering
\resizebox{10cm}{!}{\includegraphics{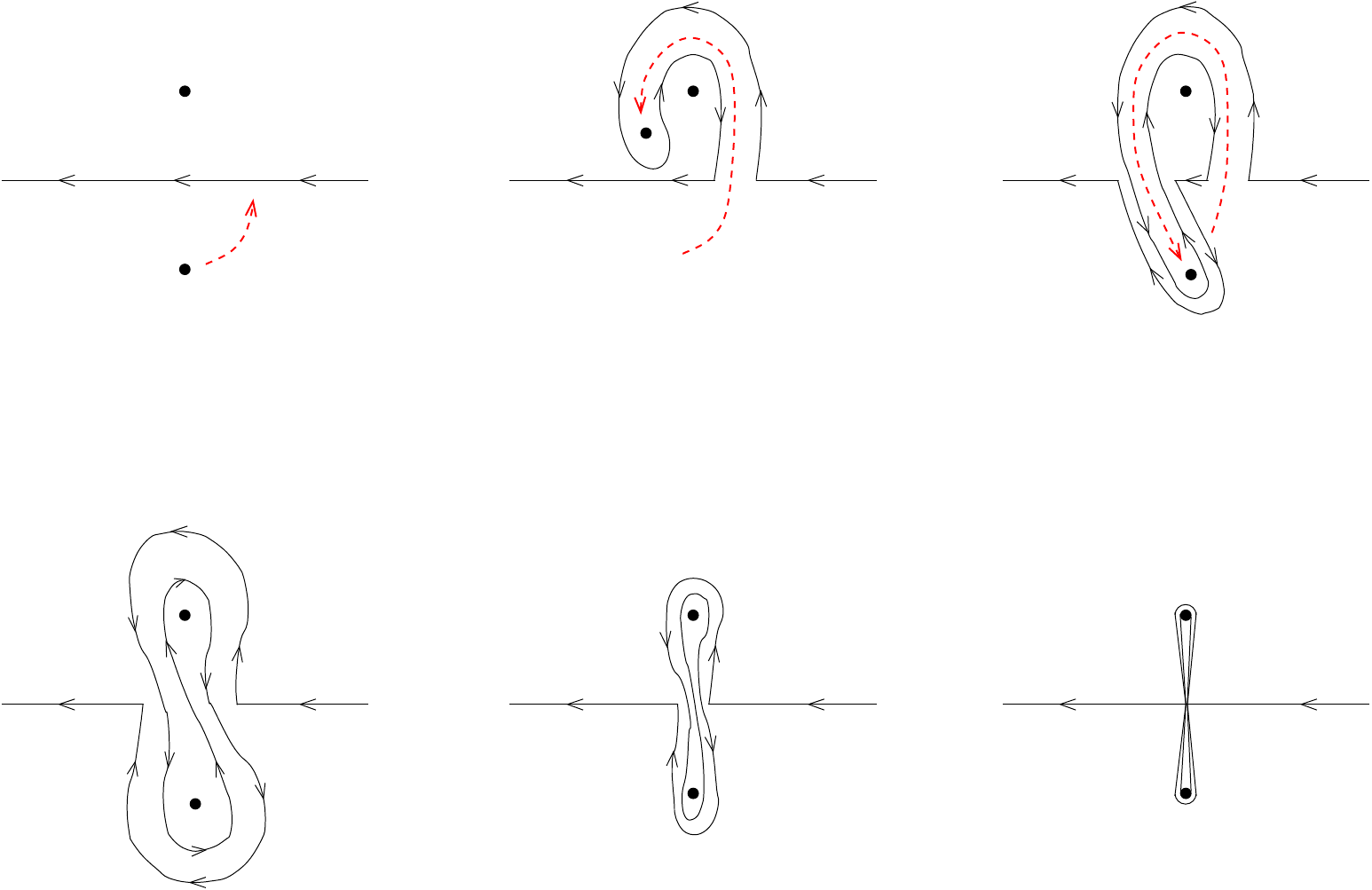}}    % name of the file - without extension
\put (-280,140) {\scriptsize $\eta_{z_0}$}
\put (-248,170) {\scriptsize $1$}
\put (-248,120) {\scriptsize $z_0$}
\put (-158,140) {\scriptsize $\eta_z$}
\put (-142,157) {\scriptsize $1$}
\put (-152,151) {\scriptsize $z$}
\put (-70,140) {\scriptsize $\hat \eta_{z_0}$}
\put (-40,157) {\scriptsize $1$}
\put (-28,124) {\scriptsize $z_0$}
% \put (-380,50) {\scriptsize $\eta_{z_0^+}$}
% \put (-245,53) {\scriptsize $\eta_{z_0^+}$}
% \put (-105,53) {\scriptsize $\eta_{z_0^+}$}
\put (-247,49) {\scriptsize $1$}
\put (-246,11) {\scriptsize $z_0$}
\put (-142,66) {\scriptsize $1$}
\put (-142,6) {\scriptsize $z_0$}
\put (-40,62) {\scriptsize $1$}
\put (-40,10) {\scriptsize $z_0$}
\put (-46,41) {\scriptsize $a$}
\caption{\footnotesize{Homotopical deformation of the integration path when $z_0$ turns around $1$.}} 
\end{figure}

% \medskip

We start by integrating Hadamard convolution formula  on a circle $\eta_{z_0}$ of radius $r$ with $|z_0|<r<1$, 
so that both series giving $F(u)$ and $G(z_0/u)$ 
are converging. When $z$, starting at $z_0$, moves around 
the point $1$, we deform homotopically the integration path 
into $\eta_z$ so that $z$ never crosses the integration path. With such condition, the convolution formula yields the analytic 
continuation of $F\odot G$. When we return to $z_0$, the path is deformed into $\hat \eta_{z_0}$ (see Figure 2)
the Hadamard product takes the value $(F\odot G)_+ (z_0)$. The 
difference 
$$
\Delta_1 (F\odot G)(z_0) = (F\odot G)_+ (z_0) -(F\odot G) (z_0)
$$
is the monodromy at $1$. According to the convolution formula, this difference can be computed by integrating on the homotopical difference of the 
two paths $\hat \eta_{z_0}-\eta_{z_0}$ (Figure 2). Considering the intersection point $a=[z_0,1]\cap \eta_{z_0}$, and 
shrinking the path as shown in Figure 2, 
this difference is composed by four (indeed only two, repeated twice) vertical segments (it is indeed a ``train track'')
$\eta_1=[a, 1]$, $\eta_2=[a,z_0]$, $\eta_3=[a, 1]$ and $\eta_4=[a,z_0]$ where we integrate in both directions different 
functions. Note also that the small turning loops around $1$ and $z_0$ give no contribution when they shrink because 
of the integrability condition (in the proof of the general case there is a non-trivial residual contribution here). 
We decompose further each path $\eta_j$ into two consecutive paths $\eta_j=\eta_j^-\cup \eta_j^+$ so that the difference
$\hat \eta_{z_0}-\eta_{z_0}$ decomposes as
$$
\hat \eta_{z_0}-\eta_{z_0}= \eta_1^-\cup \eta_1^+ \cup \eta_2^-\cup \eta_2^+ \cup \eta_3^-\cup \eta_3^+ \cup \eta_4^-\cup \eta_4^+
$$
where the paths $\eta_j^\pm$ follow each other in the order this union is written. 
We first list the functions that are integrated against 
the differential $du/u$ in each path. To compute these functions, notice that the monodromy around $u=1$ of $F(u)$ is $\Delta_1 F(u)$, 
and, according to Corollary \ref{cor:change_1/x},  the monodromy around $z_0$ of $G(z_0/u)$ is 
$\Delta_1 G(z_0/u)$, i.e.
$$
\Delta_{z_0} (G(z_0/u))=(\Delta_1 G)(z_0/u) \ .
$$
We need to take into account the sign corresponding to the orientation 
of the loop around each singularity.  We denote by $F_+ =F+\Delta_1 F$ and $G_+=G+\Delta_1 G$.
and we are assuming that 
the monodromies $\Delta_1 F$ and $\Delta_1 G$ are holomorphic at $1$.
\begin{align*}
\eta_1^- \rightarrow & F(u)G(z_0/u)  \\ 
\eta_1^+ \rightarrow & F_+(u)G(z_0/u) \\ 
\eta_2^- \rightarrow & F_+(u)G(z_0/u) \\ 
\eta_2^+ \rightarrow & F_+(u)G_+(z_0/u) \\ 
\eta_3^- \rightarrow & F_+(u)G_+(z_0/u) \\ 
\eta_3^+ \rightarrow & F(u)G_+(z_0/u) \\
\eta_4^- \rightarrow & F(u)G_+(z_0/u) \\ 
\eta_4^+ \rightarrow & F(u)G(z_0/u)  \\ 
\end{align*}
Now, for the contributions of each integral we have
$$
\int_{\eta_j}= \int_{\eta_j^-} -\int_{\eta_j^+}
$$
and these contributions for each integral $\eta_j$ are, respectively,
\begin{align*}
&\int_{\eta_1} - \Delta_1F(u) G(z_0/u) \, \frac{du}{u} = - \int_a^{1} \Delta_1 F(u) G(z_0/u) \, \frac{du}{u}\\
&\int_{\eta_2} -  F_+(u) \Delta_1 G(z_0/u)  \, \frac{du}{u} 
= - \int_{a}^{z_0} F_+(u) \Delta_1 G(z_0/u) \, \frac{du}{u}\\
&\int_{\eta_3} \Delta_1 F(u)  G_+(z_0/u) \, \frac{du}{u} 
=  \int_a^{1} \Delta_1 F(u)  G_+(z_0/u) \, \frac{du}{u} \\
&\int_{\eta_4}   F(u) \Delta_1 G(z_0/u)  \, \frac{du}{u} =  \int_{a}^{z_0}  F(u) \Delta_1 G(z_0/u)  \, \frac{du}{u} \\
\end{align*}
Adding up, we get
\begin{align*}
\int_{\eta_1} + \int_{\eta_3} &= \int_a^{1}  \Delta_1 F(u) \Delta_1 G(z_0/u) \, \frac{du}{u} \\
\int_{\eta_2} + \int_{\eta_4} &= -\int_{a}^{z_0}  \Delta_1 F(u) \Delta_1 G(z_0/u) \, \frac{du}{u} \\
\end{align*}
and finally
$$
\int_{\eta_{z_0}-\hat \eta_{z_0}} = \int_{\eta_1} + \int_{\eta_2} +\int_{\eta_3} + \int_{\eta_4} =-\int_1^{z_0}  \Delta_1 F(u) \Delta_1 G(z_0/u) \, \frac{du}{u}
$$
which gives the formula for the case of a single singularity.

In the general case when $\a\b=\g$, point $1$ is replaced by $\a$ and point $z_0$ is replaced by $z_0/\b$, and  we get the contribution 
$$
-\int_\a^{z_0/\b}  \Delta_1 F(u) \Delta_1 G(z_0/u) \, \frac{du}{u}
$$

In this path deformation argument, 
there is no residual contribution at $1$ because the functions and their monodromies are all integrable at $1$.

\subsection{Proof for higher multiplicity singularities.}

The initial Jordan loop of integration $\eta_{z_0}$ separates the singularities $(\alpha)$ of $F$, 
which are in the outside unbounded region, 
from the  singularities $(z_0/\beta)$ of $G(z_0/u)$, which are in the inside bounded region. 
As before, when the point $z$ starting at $z_0$ circles once around $\a\b =\g$, then all 
points $z/\beta$ circles once around the corresponding $\a$ in a synchronized choreography. 
We end-up with a path $\hat \eta_{z_0}$ as shown in Figure 1 (reversing the orientation). 
Then the difference of paths 
$\eta_{z_0} -\hat \eta_{z_0}$ is decomposed into a finite number of quadruple 
loops as the one considered before, one for each pair $(\a, \b)$ such that $\g=\a\b$. 
The total contribution adds the contribution of each quadruple loop and the formula follows.

\section{General holomorphic monodromy formula.}

\subsection{Formula for the Hadamard product.}
We prove the general result by following the ideas from the previous section. We are 
reduced to consider the case of a singularity $\a \b$ without multiplicity, and 
again we can assume $\a=\b=1$. We consider 
the same integration path. 

The only difference appears when we shrink loops at $u=1$ and $u=z_0$. For example, for the first path $\eta_1$, 
when we turn around $u=1$, 
we get an extra contribution of 
$$
\lim_{\epsilon_1 \to 1} \frac{1}{2\pi i}\int_{\epsilon_1}  F(u) G(z_0/u) \, \frac{du}{u}
$$
where $\epsilon_1$ is a local positive circle loop around $u=1$, and we take 
the limit when this loop converges to $u=1$.
We have
$$
\lim_{\epsilon_1 \to 1} \frac{1}{2\pi i}\int_{\epsilon_1}  F(u) G(z_0/u) \, \frac{du}{u} =\lim_{\epsilon_1 \to 1} \frac{1}{2\pi i}\int_{\epsilon_1}  F_0(u) G(z_0/u) \, \frac{du}{u} + \lim_{\epsilon_1 \to 1} \frac{1}{2\pi i}  \int_{\epsilon_1}  \frac{1}{2\pi i} \log(u-1) \Delta_1 F(u) G(z_0/u) \, \frac{du}{u}
$$
and the last  limit is zero because of the integrability condition, and more precisely because 
$\Delta_1 F(u) G(z_0/u)/u$ is holomorphic and $\log(u-1)$ integrable at $u=1$.

Therefore, for each path $\eta_j$ we get the following extra residue contributions to the integral 
(the orientations give the proper signs):

\begin{align*}
\eta_1 \rightarrow & \Res_{u=1} \left ( \frac{F_0(u) G(z_0/u)}{u} \right ) \\
\eta_2 \rightarrow & \Res_{u=z_0} \left ( \frac{F_+(u) G_0(z_0/u)}{u} \right ) \\ 
\eta_3 \rightarrow & -\Res_{u=1} \left ( \frac{F_0(u) G_+(z_0/u)}{u} \right ) \\ 
\eta_4 \rightarrow & - \Res_{u=z_0} \left ( \frac{F(u) G_0(z_0/u)}{u} \right )  
\end{align*}

Adding these four contributions gives a total residual contribution of

\begin{equation*}
R =- \Res_{u=1} \left ( \frac{F_0(u) \Delta_1 G(z_0/u)}{u} \right ) 
+\Res_{u=z_0} \left ( \frac{\Delta_1 F(u) G_0(z_0/u)}{u} \right ) \ \ .
\end{equation*}

We can give to this expression a symmetric form using the following elementary Lemma.

\begin{lemma}
We have 
$$
\Res_{u=z_0} \left ( \frac{\Delta_1 F(u) G_0(z_0/u)}{u} \right ) =- 
\Res_{u=1} \left ( \frac{G_0(u) \Delta_1 F(z_0/u)}{u} \right ) 
$$
\end{lemma}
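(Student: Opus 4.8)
The statement is a change-of-variables identity between two residues. The natural approach is to substitute $v = z_0/u$ in one of the residues and track how the residue form transforms. I would begin by writing the left-hand residue as a contour integral: $\Res_{u=z_0}\left(\frac{\Delta_1 F(u)\, G_0(z_0/u)}{u}\right) = \frac{1}{2\pi i}\int_{\epsilon}\frac{\Delta_1 F(u)\, G_0(z_0/u)}{u}\, du$ where $\epsilon$ is a small positively oriented loop around $u=z_0$. Then I perform the substitution $u = z_0/v$, so $du = -z_0/v^2\, dv$ and $du/u = -dv/v$; the loop around $u=z_0$ becomes a loop around $v=1$, and—this is the point to be careful about—the substitution $u\mapsto z_0/v$ reverses orientation, so the loop around $v=1$ picks up the opposite orientation, which accounts for the sign on the right-hand side.

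Carrying the substitution through, $\frac{\Delta_1 F(u)\, G_0(z_0/u)}{u}\, du$ becomes $-\frac{\Delta_1 F(z_0/v)\, G_0(v)}{v}\, dv$, and combining the orientation reversal of the contour with this explicit minus sign gives $\frac{1}{2\pi i}\int_{\epsilon'}\frac{G_0(v)\,\Delta_1 F(z_0/v)}{v}\, dv = \Res_{v=1}\left(\frac{G_0(v)\,\Delta_1 F(z_0/v)}{v}\right)$ with $\epsilon'$ positively oriented around $v=1$, but with a single overall sign coming from the orientation flip. Thus the two residues differ exactly by a sign, which is the claim. One should note that $G_0(v)$ is holomorphic at $v=1$ (since the singularity of $G$ at $1$ is totally holomorphic, $G_0$ is a holomorphic germ there), so the only pole of the integrand inside $\epsilon'$ comes from $\Delta_1 F(z_0/v)$, whose singularity in $v$ sits at $v=1$ precisely because $\Delta_1 F$ has its singularity at $u = z_0/v = 1$; by Corollary \ref{cor:change_1/x}, $\Delta_1 F(z_0/v)$ indeed has a genuine monodromy/pole structure at $v=1$.

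The main obstacle—really the only subtlety—is bookkeeping the orientation and the sign: one must verify that the composite of the explicit Jacobian sign $du/u = -dv/v$ and the orientation reversal of the small loop under $u\mapsto z_0/v$ produces a net single minus sign rather than cancelling. A clean way to avoid confusion is to keep all contours positively oriented and absorb the reversal into an explicit $-1$ factor, so that the Jacobian contributes one $-1$ and the reparametrization contributes another, but only one of these is "visible" once we insist on positively oriented loops on both sides. Alternatively, and perhaps more transparently, one can argue purely locally: both sides are linear in the relevant data, so it suffices to check the identity on monomials, e.g. $\Delta_1 F(u) = (u-1)^m$ for $m \geq 0$ and $G_0$ a polynomial in $(z_0/u - 1)$, where the residue computations are elementary and the sign emerges directly. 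Either route reduces the Lemma to a routine (if sign-sensitive) verification.
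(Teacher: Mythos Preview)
Your approach---express the residue as a contour integral over a small loop and substitute $v=z_0/u$---is exactly the paper's. The paper simply writes
\[
\frac{1}{2\pi i}\int_\gamma \frac{\Delta_1 F(u)\, G_0(z_0/u)}{u}\,du
= \frac{1}{2\pi i}\int_{\gamma'} \frac{\Delta_1 F(z_0/v)\, G_0(v)}{z_0/v}\,(-z_0)\,\frac{dv}{v^2}
= -\frac{1}{2\pi i}\int_{\gamma'} \frac{G_0(v)\,\Delta_1 F(z_0/v)}{v}\,dv,
\]
with $\gamma'$ a local loop of winding number $+1$ about $v=1$, and reads off the result.

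However, your sign bookkeeping contains a real slip. You assert that $u\mapsto z_0/u$ reverses the orientation of the small loop; it does not. This map is a local holomorphic diffeomorphism at $u=z_0$, and such maps preserve orientation: concretely, for $u=z_0+\epsilon e^{i\theta}$ one has $v=z_0/u\approx 1-(\epsilon/z_0)e^{i\theta}$, so $\arg(v-1)$ increases with $\theta$ and the image loop still winds $+1$ around $v=1$. Hence there is exactly \emph{one} sign in play, the Jacobian contribution $du/u=-dv/v$. If one took your stated reasoning at face value---a Jacobian $-1$ composed with an orientation reversal---the two would cancel and yield $+\Res_{v=1}(\cdots)$, the wrong answer. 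Your fallback monomial check would of course rescue the sign, but the contour argument as written is not correct.

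A minor side point: your remark that ``the only pole of the integrand inside $\epsilon'$ comes from $\Delta_1 F(z_0/v)$'' is backwards in this section's setting. Here $\Delta_1 F$ is assumed holomorphic at $1$, so $\Delta_1 F(z_0/v)$ is regular near $v=1$; the (possible) singularity at $v=1$ comes from $G_0$, which has a uniform isolated singularity there. This does not affect the change-of-variables identity itself.
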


\begin{proof}
We consider a local loop $\g$ with winding number $1$ with respect to $z_0$ and use the Residue Theorem
and the change of variables $v=z_0/u$, $du=-z_0 dv/v^2$, $\gamma'$ the image of $\gamma$ that is a local loop
with winding number number with respect to $1$,
\begin{align*}
\Res_{u=z_0} \left ( \frac{\Delta_1 F(u) G_0(z_0/u)}{u} \right ) 
&= \frac{1}{2\pi i} \int_\g  \frac{\Delta_1 F(u) G_0(z_0/u)}{u} \, du \\
&= \frac{1}{2\pi i} \int_{\g'}  \frac{\Delta_1 F(z_0/v) G_0(v)}{z_0/v} (-z_0) \, \frac{dv}{v^2} \\
&= -\frac{1}{2\pi i} \int_{\g'}  \frac{\Delta_1 F(z_0/v) G_0(v)}{v}  \, dv \\
&= -\Res_{u=1} \left ( \frac{G_0(u) \Delta_1 F(z_0/u)}{u} \right )
\end{align*}
\end{proof}

Finally, the residue total contribution is 
\begin{equation*}
R =- \Res_{u=1} \left ( \frac{F_0(u) \Delta_1 G(z_0/u)}{u} \right ) 
-\Res_{u=1} \left ( \frac{G_0(u) \Delta_1 F(z_0/u)}{u} \right ) 
\end{equation*}
In the case of a general singularity $\g=\a\b$, this residue contribution is
\begin{equation*}
R =- \Res_{u=\a} \left ( \frac{F_0(u) \Delta_\b G(z_0/u)}{u} \right ) 
-\Res_{u=\b} \left ( \frac{G_0(u) \Delta_\a F(z_0/u)}{u} \right ) 
\end{equation*}
and this gives the holomorphic monodromy formula (\ref{eq:general_monodromy_convolution}).

\subsection{Formula for the exponential e\~ne product.}

The proof for the exponential e\~ne product follows the same lines. 
The only difference is the the sign in the convolution formula and the integrating function. Since the monodromy operator 
commutes with differentiation there is almost no difference in the entire argument. 
We need to use the following Lemma:

\begin{lemma}\label{lem_F0_derivative}
A holomorphic singulartiy $\a$ of $F$ is a holomorphic singularity for $F'$ and we have
$$
\left (F'\right )_0=F_0'+\frac{1}{2\pi i} \, \frac{1}{z-\a} \, \Delta_\a F
$$
\end{lemma}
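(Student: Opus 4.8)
The plan is to start from the decomposition provided by Proposition \ref{prop_decomposition} (equivalently Corollary \ref{cor:representation}): since $\a$ is a holomorphic singularity of $F$, we have $\Delta_\a^2 F = 0$ and we may write, in a pointed neighborhood of $\a$,
$$
F(z) = F_0(z) + \frac{1}{2\pi i} \log(z-\a)\, \Delta_\a F(z),
$$
where $F_0$ has a uniform (monodromy-free) isolated singularity at $\a$ and $\Delta_\a F$ is holomorphic at $\a$. First I would differentiate this identity term by term, using the product rule, to obtain
$$
F'(z) = F_0'(z) + \frac{1}{2\pi i}\,\frac{1}{z-\a}\,\Delta_\a F(z) + \frac{1}{2\pi i}\log(z-\a)\,\big(\Delta_\a F\big)'(z).
$$
Then I would regroup the three terms into a candidate ``uniform part plus $\log$ times monodromy'' decomposition for $F'$: set
$$
G_0(z) := F_0'(z) + \frac{1}{2\pi i}\,\frac{1}{z-\a}\,\Delta_\a F(z), \qquad
H(z) := \big(\Delta_\a F\big)'(z),
$$
so that $F'(z) = G_0(z) + \frac{1}{2\pi i}\log(z-\a)\,H(z)$.

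The key steps are then to verify that this regrouping is exactly the canonical decomposition of $F'$ at $\a$, which by the uniqueness in Proposition \ref{prop_decomposition} forces $(F')_0 = G_0$ and gives the claimed formula. For this I need two checks. First, $H = \Delta_\a F'$: this is immediate from Proposition \ref{prop:commuting}, which states $\Delta_\a(F') = (\Delta_\a F)'$; in particular $\Delta_\a F'$ is holomorphic at $\a$ because $\Delta_\a F$ is, so $\a$ is indeed a holomorphic singularity of $F'$ and $\Delta_\a^2 F' = (\Delta_\a^2 F)' = 0$. Second, $G_0$ must have a uniform (monodromy-free) singularity at $\a$: since $F_0'$ is the derivative of a monodromy-free germ it is monodromy-free ($\Delta_\a F_0' = (\Delta_\a F_0)' = 0$ by Proposition \ref{prop:commuting} and $\Delta_\a F_0 = 0$), and $\frac{1}{z-\a}\Delta_\a F(z)$ is meromorphic, hence single-valued, at $\a$; so $\Delta_\a G_0 = 0$. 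Alternatively, one can check directly that $\Delta_\a\big(F' - \frac{1}{2\pi i}\log(z-\a)\,\Delta_\a F'\big) = 0$ exactly as in the proof of Corollary \ref{cor:representation}.

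I do not expect a serious obstacle here; the only point requiring a little care is the bookkeeping of which term of the differentiated identity is the genuine ``$\log \times$ monodromy'' part of $F'$ and which gets absorbed into $(F')_0$ — the term $\frac{1}{2\pi i}\frac{1}{z-\a}\Delta_\a F$ looks like it could be mistaken for part of a principal/polar contribution, and the content of the lemma is precisely that it belongs to the uniform part $(F')_0$ rather than to anything multi-valued. Invoking the uniqueness clause of Proposition \ref{prop_decomposition} closes the argument once the two single-valuedness checks above are in place.
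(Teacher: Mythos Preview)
Your proposal is correct and follows essentially the same approach as the paper: differentiate the decomposition $F = F_0 + \frac{1}{2\pi i}\log(z-\a)\,\Delta_\a F$, use the commutation $\Delta_\a(F') = (\Delta_\a F)'$ from Proposition~\ref{prop:commuting}, and invoke the uniqueness clause of Proposition~\ref{prop_decomposition} to identify $(F')_0$. Your version is somewhat more explicit in checking that $G_0$ is monodromy-free and that $\a$ is a holomorphic singularity of $F'$, but the argument is the same.
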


\begin{proof}
We use the commutation of the monodromy operator with the differential operator. 
We differentiate the decomposition
$$
F(z)=F_0(z)+\frac{1}{2\pi i} \, \log (z-\a) \Delta_\a F (z)
$$
and obtain
$$
F'(z)=\left ( F_0'(z)+\frac{1}{2\pi i} \, \frac{1}{ z-\a} \Delta_\a F (z) \right ) +
\frac{1}{2\pi i} \, \log (z-\a) \Delta_\a F' (z)
$$
and by uniqueness of the monodromy decomposition for $F'$ from Proposition \ref{prop_decomposition}, we get the result.
\end{proof}

The same proof as before, starting from the exponential e\~ne product convolution formula, gives

\begin{align*}
\Delta_{\g} (F\star_e G) (z)&= \sum_{\substack{\a ,\b \\ \a\b=\g}} 
 \Res_{u=\a} \left ( (F')_0(u) \Delta_\b G(z/u) \right ) 
+\sum_{\substack{\a ,\b \\ \a\b=\g}} \Res_{u=\b} \left ( G_0(u) \Delta_\a F'(z/u) \right )  \\
& \ \ \ \, + \frac{1}{2\pi i}  \sum_{\substack{\a ,\b \\ \a\b=\g}}  
\int_\a^{z/\b} \Delta_{\a} F'(u) \Delta_\b G (z/u) \, du \nonumber
\end{align*}

Now, using the Lemma \ref{lem_F0_derivative}, we have

\begin{align*}
 \Res_{u=\a} \left ( (F')_0(u) \Delta_\b G(z/u) \right ) &= \Res_{u=\a} \left ( F'_0(u) \Delta_\b G(z/u) \right )
+\frac{1}{2\pi i}\, \Res_{u=\a} \left ( \frac{1}{u-\a} \Delta_\a F(u) \Delta_\b G(z/u) \right )\\
&= \Res_{u=\a} \left ( F'_0(u) \Delta_\b G(z/u) \right ) +\frac{1}{2\pi i}  \Delta_\a F(\a) \Delta_\b G(z/ \a)  
 \end{align*}
and formula (\ref{eq:ene_monodromy_convolution}) follows.

This commutation property for the derivation is also the reason for the symmetry on $F$ and $G$ 
in the formula. By integration by parts we have:

\begin{proposition}\label{prop:symmetry_ene}
For $\g=\a\b$ we have
 \begin{equation*} 
 \Delta_\a F(\a) \Delta_\b G(z/ \a) +\int_\a^{z/\b} \Delta_{\a} F'(u) \Delta_{\b} G (z/u) \, du = 
 \Delta_\a F(z/\b) \Delta_\b G(\b) +\int_\b^{z/\a} \Delta_{\b} G'(u) \Delta_{\a} F (z/u) \, du
\end{equation*}
\end{proposition}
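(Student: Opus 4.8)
The plan is to prove the symmetry identity directly by integration by parts on the integral term, using the commutation of $\Delta_\a$ with differentiation (Proposition~\ref{prop:commuting}) and the change-of-variables Corollary~\ref{cor:change_1/x}. Set $\g=\a\b$ and treat $z$ as fixed. The key observation is that on the left-hand side the variable $u$ runs over a path from $\a$ to $z/\b$, while the factor $\Delta_\b G(z/u)$ should be re-expressed via the substitution $v = z/u$ so that $v$ runs from $z/\a$ down to $\b$. Under $v=z/u$ we have $du = -z\,dv/v^2$, and $u = z/v$, so $\Delta_\a F'(u)\,du = -\Delta_\a F'(z/v)\,\tfrac{z}{v^2}\,dv$. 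This already starts to interchange the roles of the two factors.

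First I would carry out the integration by parts on the left-hand integral. Write $I = \int_\a^{z/\b} \Delta_\a F'(u)\,\Delta_\b G(z/u)\,du$, and integrate by parts with $dv_1 = \Delta_\a F'(u)\,du$ (so $v_1 = \Delta_\a F(u)$, legitimate since $\Delta_\a$ commutes with $d/dz$, hence $\Delta_\a F$ is a primitive of $\Delta_\a F'$) and $w_1 = \Delta_\b G(z/u)$. Then
\[
I = \Big[\Delta_\a F(u)\,\Delta_\b G(z/u)\Big]_{u=\a}^{u=z/\b} - \int_\a^{z/\b} \Delta_\a F(u)\,\frac{d}{du}\Big(\Delta_\b G(z/u)\Big)\,du.
\]
The boundary term evaluates to $\Delta_\a F(z/\b)\,\Delta_\b G(\b) - \Delta_\a F(\a)\,\Delta_\b G(z/\a)$. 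The first of these two terms, together with the originally present $\Delta_\a F(\a)\,\Delta_\b G(z/\a)$ on the left-hand side, combines to give exactly $\Delta_\a F(z/\b)\,\Delta_\b G(\b)$, which is the boundary term appearing on the right-hand side — so the boundary bookkeeping works out cleanly.

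It remains to identify $-\int_\a^{z/\b} \Delta_\a F(u)\,\tfrac{d}{du}\big(\Delta_\b G(z/u)\big)\,du$ with $\int_\b^{z/\a} \Delta_\b G'(v)\,\Delta_\a F(z/v)\,dv$. Here I substitute $v=z/u$: the limits $u=\a$ and $u=z/\b$ become $v=z/\a$ and $v=\b$, so the orientation of the path reverses and contributes a sign. Also $\tfrac{d}{du}\big(\Delta_\b G(z/u)\big) = \Delta_\b G'(z/u)\cdot(-z/u^2) = \Delta_\b G'(v)\cdot(-v^2/z)$ while $du = -z\,dv/v^2$, so $\tfrac{d}{du}\big(\Delta_\b G(z/u)\big)\,du = \Delta_\b G'(v)\,dv$ after cancellation; combined with $\Delta_\a F(u) = \Delta_\a F(z/v)$ and the reversal of limits (which flips the overall sign), the left-over integral becomes precisely $\int_\b^{z/\a}\Delta_\b G'(v)\,\Delta_\a F(z/v)\,dv$, as required.

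The main obstacle, such as it is, is purely a matter of sign and orientation discipline: one must track (i) the sign from reversing the orientation of the integration path under $v=z/u$, (ii) the sign from the chain rule $\tfrac{d}{du}(z/u) = -z/u^2$, and (iii) the sign hidden in $du = -z\,dv/v^2$, and verify that two of these cancel so that exactly one net sign survives to flip the integrand into the symmetric form. A secondary point worth a sentence is the justification that $\Delta_\a F$ is genuinely an antiderivative of $\Delta_\a F'$ along the path — this is immediate from Proposition~\ref{prop:commuting}, but since $\Delta_\a F$ is holomorphic near $\a$ (holomorphic monodromy hypothesis) and the path from $\a$ to $z/\b$ can be taken inside the region where all the relevant germs continue analytically, the fundamental theorem of calculus applies without further comment. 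No genuine analytic difficulty arises; the content is entirely the change of variables $v=z/u$, which is exactly the symmetry $(\a,\b)\leftrightarrow(\b,\a)$ of the e\~ne product made manifest.
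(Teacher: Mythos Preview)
Your proof is correct and follows exactly the same route as the paper: integration by parts on the left-hand integral (producing the boundary term that swaps the point-evaluation contributions), followed by the change of variables $v=z/u$ in the remaining integral. You are in fact more careful with the sign bookkeeping than the paper's own terse computation.
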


\begin{proof}
We perform an integration by parts, and the change of variables $v=z/u$,

\begin{align*}
\int_\a^{z/\b} \Delta_{\a} F'(u) \Delta_{\b} G (z/u) \, du &= \left [ \Delta_{\a} F(u) \Delta_{\b} G (z/u)\right ]_{\a}^{z/\b} - \int_\a^{z/\b} \Delta_{\a} F(u) \Delta_{\b} G' (z/u) \left (-\frac{1}{u^2}\right )\, du \\
&= \left [ \Delta_{\a} F(u) \Delta_{\b} G (z/u)\right ]_{\a}^{z/\b}  + \int_\b^{z/\a} \Delta_{\a} F(z/v) \Delta_{\b} G' (v) \, dv \\
\end{align*}

\end{proof}

We could have derived the formula for the monodromy of the exponential e\~ne product 
from formula (\ref{eq:general_monodromy_convolution}) for the Hadamard product by using 
$$
F\star_e G =-K_0\odot F\odot G \ .
$$
It is instructive to derive it. We have already noted that the effect on the monodromy of the 
Hadamard product with $-K_0$ is just the derivation of the monodromy, hence
$$
\Delta_{\g} (F\star_e G)= \left ( \Delta_\g F\odot G \right )'  
$$
thus, we only need to take the derivate of formula (\ref{eq:general_monodromy_convolution}). 
Using the commutation 
with the monodromy operator and previous Lemma \ref{lem_F0_derivative} we get the result again.
The proof of Corollary \ref{cor:ene_improved_borel} is straightforward 
from the e\~ne monodromy formula.

\section{Applications.}

\subsection{Application 1: Monodromy of polylogarithms.} \label{sec:polylogarithms}

We use formula (\ref{eq:general_monodromy_convolution}) to compute the classical monodromy of polylogarithms.  This is usually done in the literature by using 
functional equations (as for example in \cite{Oesterle1993}).
The polylogarithm $\Li_k(z)$ for $k=1,2,\ldots$ can be defined for $|z|<1$ by the converging series
$$
\Li_k(z) =\sum_{n=1}^{+ \infty}  n^{-k} \, z^n
$$
Therefore $\Li_1$ is given by the logarithm, $\Li_1(z)=-\log(1-z)$ that has the principal 
branch with a multivalued holomorphic extension to $\CC-\{1\}$ with a unique singularity 
at $1$ which is of logarithmic type with a constant monodromy
$$
\Delta_1 \Li_1 = -2\pi i \ .
$$
Higher polylogarithms can also be defined inductively by integration
$$
\Li_{k+1}(z)=\int_0^z \Li_k(u)\, \frac{du}{u}
$$
as we readily see by integrating term by term the defining power series in its disk of convergence.
Using the integral expression we prove by induction that their principal branch extends holomorphically to a multivalued function on $\CC-\{1\}$ with a unique 
singularity at $1$ (non-principal branches can have also singularities at $0$ as we see below).

From the power series definition it is clear that higher order polylogarithms can also 
be defined inductivelly using Hadamard multiplication
$$
\Li_{k+1} =\Li_k\odot \Li_1 \ .
$$
The minimal Hadamard ring structure (for the sum and the Hadamard product) generated  by the logarithm is obtained by adjoining higher order polylogarithms, for $k,l\geq 1$,
$$
\Li_{k+l} =\Li_k\odot \Li_l \ .
$$
As a Corollary of monodromy formula (\ref{eq:general_monodromy_convolution}) we can compute directly the monodromy at $s=1$.

\begin{corollary}
For $k\geq 2$ the only singularities of the analytic continuation of $\Li_k$ are located 
at $0$ and $1$.
For $k\geq 1$ the monodromy at $1$ of the principal branch of $\Li_k$ is holomorphic at $z=1$
and, more precisely, 
$$
\Delta_1 \Li_k =-\frac{2\pi i}{(k-1)!} \, (\log z)^{k-1}
$$
\end{corollary}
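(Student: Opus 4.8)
The plan is to argue by induction on $k$, using the inductive definition $\Li_{k+1} = \Li_k \odot \Li_1$ together with the general holomorphic monodromy formula (\ref{eq:general_monodromy_convolution}). The base case $k=1$ is exactly the normalization $\Delta_1 \Li_1 = -2\pi i$ recalled above, which matches $-\frac{2\pi i}{0!}(\log z)^0 = -2\pi i$. For the inductive step, assume $\Li_k$ has its principal branch holomorphic on $\CC \setminus \{1\}$ with a single singularity at $1$ (holomorphic monodromy) and $\Delta_1 \Li_k = -\frac{2\pi i}{(k-1)!}(\log z)^{k-1}$. I would apply Theorem \ref{thm:Hadamard_holomorphic_monodromy} with $F = \Li_k$, $G = \Li_1$, so the only possible singularities of $\Li_{k+1}$ are at products $\a\b$ with $\a \in \{1\}$ (for $k\ge 2$; and one must also track that non-principal branches of $\Li_k$ can have a singularity at $0$, which only produces products equal to $0$), giving $\g = 1$, plus the exceptional point $z=0$ flagged in the theorem statement for non-principal branches. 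This confirms the first assertion that the singularities are located only at $0$ and $1$.

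The core computation is evaluating formula (\ref{eq:general_monodromy_convolution}) at $\g = \a\b = 1$ with $\a = \b = 1$. Here $G = \Li_1 = -\log(1-z)$, so $\Delta_\b G(z/u) = \Delta_1 \Li_1(z/u) = -2\pi i$ is constant, and hence $G_0 = \Li_1 + \log(z-1) = $ (a holomorphic germ at $1$, in fact $-\log(1-z) + \log(z-1)$ is locally constant up to the branch normalization — I would write $G_0$ explicitly and note $\Delta_1 G_0 = 0$). Similarly $F = \Li_k$ has $\Delta_1 F(u) = -\frac{2\pi i}{(k-1)!}(\log u)^{k-1}$ and $F_0 = \Li_k - \frac{1}{2\pi i}\log(u-1)\cdot\Delta_1\Li_k$. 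Plugging in: the first residue term $-\Res_{u=1}\bigl(F_0(u)\Delta_1 G(z/u)/u\bigr)$ involves $\Delta_1 G(z/u) = -2\pi i$ constant, so it is $-(-2\pi i)\Res_{u=1}(F_0(u)/u)$; since $F_0$ is holomorphic at $1$, this residue vanishes. The second residue term $-\Res_{u=1}\bigl(G_0(u)\Delta_1 F(z/u)/u\bigr)$ needs a short check that $G_0$ is holomorphic at $u=1$ so again the residue of a holomorphic-times-$\Delta_1 F(z/u)/u$... — here I need to be careful: $\Delta_1 F(z/u) = -\frac{2\pi i}{(k-1)!}(\log(z/u))^{k-1}$ is holomorphic in $u$ near $u=1$ when $z \ne 0,1$, so $G_0(u)\Delta_1 F(z/u)/u$ is holomorphic at $u=1$ and the residue is $0$. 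Thus both residue terms drop out, and only the integral term survives.

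So $\Delta_1 \Li_{k+1}(z) = -\frac{1}{2\pi i}\int_1^{z} \Delta_1\Li_k(u)\,\Delta_1\Li_1(z/u)\,\frac{du}{u}$; substituting the inductive formula and $\Delta_1\Li_1 = -2\pi i$ gives
\[
\Delta_1 \Li_{k+1}(z) = -\frac{1}{2\pi i}\int_1^{z} \left(-\frac{2\pi i}{(k-1)!}(\log u)^{k-1}\right)(-2\pi i)\,\frac{du}{u} = -\frac{2\pi i}{(k-1)!}\int_1^{z}(\log u)^{k-1}\,\frac{du}{u},
\]
and since $\int_1^z (\log u)^{k-1}\frac{du}{u} = \frac{(\log z)^k}{k}$, this equals $-\frac{2\pi i}{k!}(\log z)^k$, completing the induction. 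The main obstacle, and the step I would write out most carefully, is verifying that the two residue contributions genuinely vanish — i.e., that $F_0$ and $G_0$ are holomorphic at $u=1$ with the right local structure and that $\Delta_1 F(z/u)$ has no pole in $u$ at $u=1$ for generic $z$ — together with correctly bookkeeping the branch of $\log$ and the base points of the integral (starting the path at $1$ versus at a nearby $z_0$, as in the proof of the theorem), since a misplaced constant of integration would corrupt the final formula. One should also remark that formula (\ref{eq:general_monodromy_convolution}) is stated for $\a,\b$ ranging over singularities, and for $k\ge 2$ one must confirm that the branch-point at $0$ of non-principal branches of $\Li_k$ does not contribute a second term at $\g=1$ (it contributes only at $\g=0$), consistent with the parenthetical remark in the statement.
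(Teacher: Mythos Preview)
Your proposal is correct and follows essentially the same approach as the paper: induction on $k$ via $\Li_{k+1}=\Li_k\odot\Li_1$ and the holomorphic monodromy formula, reducing to the same integral $\int_1^z (\log u)^{k-1}\,\frac{du}{u}$. The only cosmetic difference is that the paper invokes the notion of a \emph{totally holomorphic} singularity to apply the simpler formula (\ref{eq:monodromy_convolution}) directly, whereas you start from the general formula (\ref{eq:general_monodromy_convolution}) and argue that both residue terms vanish because $F_0$, $G_0$, and $\Delta_1 F(z/u)$ are holomorphic at $u=1$---which is exactly the content of total holomorphy, so the two arguments coincide.
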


\begin{proof}
For $k=1$ this is the monodromy of the classical logarithm that is holomorphic (constant) at $1$. 
Assuming by induction the result for $k\geq 1$, the monodromy for $\Li_k$ 
is holomorphic at $z=1$, and we can use Theorem \ref{thm:holomorphic} with the 
formula $\Li_{k+1} =\Li_k\odot \Li_1$, observing that the singularity at $\a=1$ is totally holomorphic, 
so we can use the simpler formula (\ref{eq:monodromy_convolution}), and we get (using the change of variables $v=\log u$)
\begin{align*}
\Delta_1 \Li_{k+1} (z)&=-\frac{1}{2\pi i} \int_1^z -\frac{2\pi i}{(k-1)!} 
(\log u)^{k-1} (-2 \pi i) \, \frac{du}{u}\\
&= -\frac{2\pi i}{(k-1)!}  \int_0^{\log z} v^{k-1}\, dv\\
&= -\frac{2\pi i}{k!} (\log z)^k
\end{align*}
\end{proof}
We can use the formula for the exponential e\~ne product to cross check 
this result  since it follows from the power series expansion that, for $k,l\geq 1$,
\begin{equation*}
\Li_k\star_e \Li_l =-\Li_{k+l-1}  
\end{equation*}
and in particular,
\begin{align*}
\Li_{k+1} &= -\Li_k\star_e \Li_2 \\
\Li_{1}&= -\Li_1\star_e \Li_1   
\end{align*}
hence the minimal exponential e\~ne ring containing $\Li_1$ is just generated by $\Li_1$, but the one 
containing $\Li_1$ and $\Li_2$ is generated by all the higher polylogarithms as before.
Related to this we observe that we need to know the monodromy of $\Li_2$ to start the induction with the 
exponential e\~ne product formula.
The exponential e\~ne convolution formula and the Hadamard convolution formula are
directly related using $\Li'_{k+1}(z)=\Li_k(z)/z$ since 
$$
\Li_k\star_e \Li_l=-\frac{1}{2\pi i}\int_\eta \Li_k'(u)\Li_l(z/u)\, du 
= -\frac{1}{2\pi i}\int_\eta \Li_k(u)\Li_l(z/u)\, \frac{du}{u} = -\Li_k\odot \Li_l =-\Li_{k+l}
$$

Now, using the formula (\ref{eq:ene_tot_hol_monodromy_convolution}) for the e\~ne monodromy for the totally 
holomorphic singularity at $\a=1$, 
and the change of variables $v=\log u/\log z$, 
we have
\begin{align*}
\Delta_1 (\Li_k\star_e \Li_l) &= \frac{1}{2\pi i} \Delta_1 \Li_k(1) \Li_l (z)+\frac{1}{2\pi i} \int_1^z \left (\frac{-2\pi i}{(k-1)!} 
(\log u)^{k-1}\right )'\left ( \frac{-2 \pi i}{(l-1)!} (\log (z/u))^{l-1}\right )  du \\
&= 0+\frac{2\pi i}{(k-2)!(l-1)!}  \int_1^z  
(\log u)^{k-2} .  (\log (z/u))^{l-1}  du \\
% 
% \int_0^{\log s} v^{k-1}\, dv\\
&=\frac{2\pi i}{(k-2)!(l-1)!} (\log z)^{k+l-2} \int_0^1 v^{k-2} (1-v)^{l-1} \, dv \\ 
&= \frac{2\pi i}{(k-2)!(l-1)!} (\log z)^{k+l-2} B(k-1, l)\\
&= \frac{2\pi i}{(k-2)!(l-1)!} (\log z)^{k+l-2} \frac{\Gamma(k-1) \Gamma(l)}{\Gamma(k+l-1)}\\
&=\frac{2\pi i}{(k+l-2)!} (\log z)^{k+l-2} \\
&=-\Delta_1 (\Li_{k+l-1})
\end{align*}

The polylogarithm ring is a good example showing  that the e\~ne ring structure corresponds 
to the twisted Hadamard structure (see \cite{PM1} Section 10). We note also that the monodromy formula 
suggests that the proper normalization of the polylogarithm functions is
$$
\li_k(z) =-\frac{1}{2\pi i} \Li_k(z)
$$
so that
$$
\Delta_1 \li_k = \frac{1}{(k-1)!} (\log z)^{k-1} \ .
$$

\subsection{Application 2: Polynomial logarithmic monodromy  class.} \label{sec:polylogarithm_class}

The simplest case of holomorphic monodromy occurs for a polynomial monodromy. 
We determine now the minimal Hadamard ring 
containing polynomial monodromies. As a Corollary of our main formula we have the following Proposition:

\begin{proposition}\label{prop:pol_monodromy}
Let $F$ and $G$ be two holomorphic functions with polynomial monodromies, $\Delta_\a F, \Delta_\beta G \in \CC [z]$.
Then we have that 
$$
\Delta_\g (F\odot G) \in \CC[z] \oplus \CC[z] \log z \ .
$$
\end{proposition}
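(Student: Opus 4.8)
The plan is to apply the general holomorphic monodromy formula (\ref{eq:general_monodromy_convolution}) and check that each of its three types of terms lands in $\CC[z]\oplus\CC[z]\log z$ under the hypothesis that $\Delta_\a F$ and $\Delta_\b G$ are polynomials. First I would record a consequence of Proposition \ref{prop_decomposition}: since $\Delta_\a^2 F = 0$ (the monodromy is holomorphic, being polynomial), we have $F = F_0 + \frac{1}{2\pi i}\log(z-\a)\,\Delta_\a F$ with $F_0$ uniform at $\a$; however $F_0$ is a priori only meromorphic-type/uniform, not holomorphic, at $\a$, so the residue terms are genuinely present. The key structural observation is that $z/u$, evaluated with $u$ near a pole $\a$, is just a rational (Möbius) function of $u$, and $\Delta_\b G$ is a polynomial, so $\Delta_\b G(z/u)$ is a rational function of $u$ whose only possible $u$-pole besides those of the rational function $z/u$ is at $u=0$; near $u=\a\neq 0$ this is holomorphic in $u$, hence the residue $\Res_{u=\a}\bigl(F_0(u)\Delta_\b G(z/u)/u\bigr)$ picks out the polar part of $F_0$ at $\a$ paired against a Taylor expansion, producing a finite $\CC$-linear combination of derivatives $\frac{d^{k-1}}{du^{k-1}}(\Delta_\b G(z/u)/u)|_{u=\a}$. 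Each such derivative is, after evaluation at $u=\a$, a polynomial in $z$ (since $\Delta_\b G(z/u)$ is a polynomial in $z$ with coefficients rational in $u$, and differentiation and evaluation in $u$ preserve polynomiality in $z$). So the first residue sum contributes an element of $\CC[z]$, and symmetrically the second residue sum contributes an element of $\CC[z]$.

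Next I would handle the integral term $-\frac{1}{2\pi i}\sum \int_\a^{z/\b}\Delta_\a F(u)\,\Delta_\b G(z/u)\,\frac{du}{u}$. Since $\Delta_\a F$ and $\Delta_\b G$ are polynomials, the integrand is $P(u)Q(z/u)/u$ for polynomials $P,Q$; expanding $Q(z/u)=\sum_j q_j z^j u^{-j}$ and $P(u)=\sum_i p_i u^i$ gives a finite sum of terms $p_i q_j z^j u^{i-j-1}$. Integrating in $u$ from $\a$ to $z/\b$: the terms with $i-j-1\neq -1$ integrate to $\frac{p_i q_j}{i-j} z^j\bigl[(z/\b)^{i-j}-\a^{i-j}\bigr]$, which is a polynomial in $z$ (note $(z/\b)^{i-j}=\b^{-(i-j)}z^{i-j}$, and $i-j$ may be negative but then this term came with $j>i$, still a Laurent monomial in $z$ — I should double-check that only nonnegative powers of $z$ survive, or absorb the bookkeeping into noting the answer is a Laurent polynomial times the appropriate constants, but in fact since $z\to 0$ is where $F\odot G$ is holomorphic on the principal branch, the total must be polynomial plus $\log$). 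The single term with $i=j$ contributes $p_i q_i z^i \log u\big|_\a^{z/\b} = p_i q_i z^i\bigl(\log(z/\b)-\log\a\bigr)$, which lies in $\CC[z]\log z \oplus \CC[z]$. Summing over $i,j$ and over the finitely many pairs $(\a,\b)$ with $\a\b=\g$ keeps us in $\CC[z]\oplus\CC[z]\log z$.

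Assembling the three pieces: the two residue sums give polynomials, the integral gives an element of $\CC[z]\oplus\CC[z]\log z$, so $\Delta_\g(F\odot G)\in\CC[z]\oplus\CC[z]\log z$, which is the claim. The main obstacle I anticipate is the careful bookkeeping of which powers of $z$ appear and ensuring no other $\log$ terms (e.g.\ $\log(z-\g)$ or dilogarithmic terms) sneak in: the only source of a logarithm is the single resonant term $i=j$ in the integral, and one must confirm that the residue terms are genuinely free of logarithms — this follows because $F_0$ is uniform (single-valued) at $\a$, so its Laurent expansion at $\a$ has no log, and the residue extracts only polynomial-in-$z$ data. A secondary point to verify is that the branch/path ambiguity in $\int_\a^{z/\b}$ does not matter modulo the statement, since changing the path changes the integral by periods that are themselves polynomials in $z$ (residues at $u=0$ of $P(u)Q(z/u)/u$, which is $p_0 q_0$ or similar — again polynomial, in fact constant). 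Once these are pinned down, the proof is a direct substitution into (\ref{eq:general_monodromy_convolution}).
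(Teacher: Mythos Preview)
Your argument is correct and in fact slightly more careful than the paper's. The paper applies only the simpler formula (\ref{eq:monodromy_convolution}), which is valid under the \emph{totally holomorphic} hypothesis (i.e.\ $F_0$ holomorphic at $\a$); that hypothesis is not stated in the proposition, so strictly speaking the residue terms should be addressed, as you do via the general formula (\ref{eq:general_monodromy_convolution}). Your treatment of the integral term is essentially identical to the paper's: expand $\Delta_\a F(u)=\sum_n a_n u^n$ and $\Delta_\b G(z/u)=\sum_m b_m z^m u^{-m}$, integrate each monomial $u^{n-m-1}$, and observe that the non-resonant terms $n\neq m$ give polynomials in $z$ while the resonant terms $n=m$ contribute $a_n b_n z^n(\log z-\log\g)$.

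One point in your residue argument should be tightened. You write that the residue ``picks out the polar part of $F_0$ at $\a$ paired against a Taylor expansion, producing a finite $\CC$-linear combination of derivatives''. This phrasing presupposes that $F_0$ is \emph{meromorphic} at $\a$, but nothing in the hypotheses rules out an essential uniform singularity for $F_0$. The clean way around this is to expand in the other factor: since $\Delta_\b G$ is a polynomial of degree $d$, one has $\Delta_\b G(z/u)/u=\sum_{j=0}^{d} q_j\, z^j\, u^{-j-1}$, whence
\[
\Res_{u=\a}\!\left(\frac{F_0(u)\,\Delta_\b G(z/u)}{u}\right)
=\sum_{j=0}^{d} q_j\, z^j\,\Res_{u=\a}\!\bigl(F_0(u)\,u^{-j-1}\bigr),
\]
visibly a polynomial in $z$ of degree at most $d$, regardless of the nature of the isolated singularity of $F_0$ at $\a$. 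The same remark applies to the second residue sum. With this adjustment your proof goes through without further change.
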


\begin{proof}
We write
\begin{equation*}
\Delta_\a F (z) = \sum_n a_n z^n \ \ \ \ \ \text{and} \ \ \ \ \ \Delta_\b G (z) = \sum_m b_m z^m 
\end{equation*}
then, using the holomorphic monodromy formula, we get
\begin{align*}
\Delta_\g (F\odot G) (z) &=-\frac{1}{2\pi i}\sum_{\substack{\a ,\b \\ \a\b=\g}} \int_\a^{z/\b} 
\Delta_\a F (u) \Delta_\b G (z/u) \, \frac{du}{u} \\
&= -\frac{1}{2\pi i}\sum_{\substack{\a ,\b \\ \a\b=\g}} \sum_{n,m} a_n b_m z^m \int_\a^{z/\b} u^{n-m-1} \, du \\
&= -\frac{1}{2\pi i}\sum_{\substack{\a ,\b \\ \a\b=\g}} \sum_{n,m, n\not= m} a_n b_m z^m \, 
\frac{z^{n-m-1} - \gamma^{n-m-1}}{\b^{n-m-1}(n-m)}  \\
&\ \ \ -\frac{1}{2\pi i} \sum_{\substack{\a ,\b \\ \a\b=\g}}\sum_{n} a_n b_n z^n (\log z-\log \g) 
\end{align*}
\end{proof}
We have the following elementary Lemma:

\begin{lemma}
Let $\alpha\in \CC^*$, and integers $k,l\geq 0$. We have
$$
\int_\a^z u^k (\log u)^l \,  du \in \QQ[\a, \log \a][z,\log z] \ .
$$
\end{lemma}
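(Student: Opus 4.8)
The plan is to prove the statement by induction on $l\geq 0$, with the base case $l=0$ handled directly and the inductive step via integration by parts. First I would fix $\alpha\in\CC^*$ and set $I_{k,l}(z)=\int_\alpha^z u^k(\log u)^l\,du$; the claim is $I_{k,l}\in\QQ[\alpha,\log\alpha][z,\log z]$.

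For the base case $l=0$ we simply compute $I_{k,0}(z)=\int_\alpha^z u^k\,du$. If $k\neq -1$ this equals $\frac{z^{k+1}-\alpha^{k+1}}{k+1}$, which lies in $\QQ[\alpha][z]\subset\QQ[\alpha,\log\alpha][z,\log z]$; and if $k=-1$ it equals $\log z-\log\alpha$, which lies in $\QQ[\alpha,\log\alpha][z,\log z]$ as well. (Note that in the Lemma as stated $k\geq 0$, so only the first sub-case is strictly needed, but it is harmless to record both.) For the inductive step, assume the result for exponent $l-1$ and all $k\geq 0$. Integrating by parts with $dv=u^k\,du$, $v=\frac{u^{k+1}}{k+1}$ (again $k\geq 0$ so $k+1\neq 0$) and $w=(\log u)^l$, $dw=l\,(\log u)^{l-1}\frac{du}{u}$, one obtains
\begin{equation*}
I_{k,l}(z)=\left[\frac{u^{k+1}}{k+1}(\log u)^l\right]_\alpha^z - \frac{l}{k+1}\int_\alpha^z u^{k}(\log u)^{l-1}\,du = \frac{z^{k+1}(\log z)^l-\alpha^{k+1}(\log\alpha)^l}{k+1} - \frac{l}{k+1}\,I_{k,l-1}(z).
\end{equation*}
The boundary term manifestly lies in $\QQ[\alpha,\log\alpha][z,\log z]$, and $I_{k,l-1}(z)$ does too by the inductive hypothesis, so $I_{k,l}(z)$ is in that ring, completing the induction.

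I do not expect any genuine obstacle here; this is a routine termination-of-reduction argument, and the only mild subtlety is bookkeeping the exceptional exponent $k=-1$ (relevant only when the Lemma is applied in the slightly broader form needed in Proposition~\ref{prop:pol_monodromy}, where negative powers $u^{n-m-1}$ can occur). In that extended setting one should note that $\int_\alpha^z u^{-1}(\log u)^l\,du = \frac{(\log z)^{l+1}-(\log\alpha)^{l+1}}{l+1}$, which again lies in $\QQ[\alpha,\log\alpha][z,\log z]$, so the statement and its proof go through verbatim with "$k\geq 0$" relaxed to "$k\in\ZZ$". The cleanest write-up simply does the induction on $l$ and splits on whether $k=-1$ at the integration-by-parts step.
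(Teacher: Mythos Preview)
Your proof is correct and follows exactly the approach indicated in the paper, namely induction on $l\geq 0$ with the inductive step handled by integration by parts. Your additional remarks on the case $k=-1$ are accurate and indeed relevant for the application to Proposition~\ref{prop:pol_monodromy}, though the paper itself does not spell this out.
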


\begin{proof}
The result is obtained by recurrence on the exponent $l\geq 0$ and integration by parts.
\end{proof}

As a Corollary we obtain the stability of the ring $\CC[z,\log z]$ by the holomorphic monodromy formula, and the same computation 
as for Proposition \ref{prop:pol_monodromy} proves the general Theorem:

\begin{theorem}
Let $F$ and $G$ be two holomorphic functions with monodromies, $\Delta_\a F, \Delta_\beta G \in \CC [z, \log z]$.
Then we have that 
$$
\Delta_\g (F\odot G) \in \CC[z, \log z] \ .
$$
\end{theorem}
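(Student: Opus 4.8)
The plan is to use the holomorphic monodromy formula (\ref{eq:monodromy_convolution}) directly, mimicking the computation already carried out in Proposition \ref{prop:pol_monodromy}, but now allowing logarithmic factors in the expansions of the monodromies. First I would reduce to the case of a single pair $(\a,\b)$ with $\a\b=\g$, since the general monodromy is a finite linear superposition of such contributions and $\CC[z,\log z]$ is closed under addition. For that pair, I would write the two holomorphic monodromies as finite sums
\begin{equation*}
\Delta_\a F(u) = \sum_{k} c_k\, u^{n_k} (\log u)^{l_k}, \qquad \Delta_\b G(w) = \sum_{j} d_j\, w^{m_j} (\log w)^{p_j},
\end{equation*}
which is legitimate because $\Delta_\a F,\Delta_\b G\in\CC[z,\log z]$ means they are polynomials in $z$ and $\log z$. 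Substituting $w=z/u$ into $\Delta_\b G(z/u)$ and expanding $(\log(z/u))^{p_j}=(\log z-\log u)^{p_j}$ by the binomial theorem, the integrand $\Delta_\a F(u)\,\Delta_\b G(z/u)/u$ becomes a finite $\CC[\log z]$-linear combination of terms of the shape $z^{m}\,u^{q-1}(\log u)^{r}$ with $q\in\ZZ$ and $r\ge 0$.

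Next I would integrate term by term from $\a$ to $z/\b$. The key tool here is the elementary Lemma stated just above, which says $\int_\a^z u^k(\log u)^l\,du\in\QQ[\a,\log\a][z,\log z]$; I would note that the same statement holds with $z$ replaced by the upper limit $z/\b$ (since $\log(z/\b)=\log z-\log\b$ and the result is a polynomial expression), and that it also covers the case $k=-1$ of a bare $u^{-1}(\log u)^r$, whose antiderivative is $(\log u)^{r+1}/(r+1)$, again polynomial in $\log u$. Evaluating at the endpoints $u=\a$ and $u=z/\b$ therefore produces, for each term, an element of $\CC[z,\log z]$ (the constants $\a,\b,\g,\log\a,\log\b$ and the coefficients $c_k,d_j$, together with $2\pi i$, all lie in $\CC$). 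Summing the finitely many terms and the finitely many pairs $(\a,\b)$, and absorbing the overall $-1/(2\pi i)$ factor, yields $\Delta_\g(F\odot G)(z)\in\CC[z,\log z]$, which is the claim.

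The only delicate point — really the place where one must be slightly careful rather than genuinely obstructed — is the bookkeeping at the endpoint $u=z/\b$: one must check that substituting the variable upper limit does not reintroduce a worse-than-polynomial dependence on $z$, i.e. that no negative powers of $z$ or iterated logarithms appear. This is handled by the observation that every antiderivative produced by the Lemma is genuinely a polynomial in its two arguments, so composing with $u\mapsto z/\b$ (a monomial times a unit, with $\log(z/\b)=\log z-\log\b$) keeps us inside $\CC[z,\log z]$; there is no $1/z$ because the only way to create one would be a term $u^{-1}(\log u)^r$ in the integrand, whose integral is logarithmic, not a negative power. With that checked, the theorem follows from exactly the same computation as Proposition \ref{prop:pol_monodromy}, now with the Lemma doing the work that the explicit rational antiderivative did in the polynomial case.
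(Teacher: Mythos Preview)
Your approach is the same as the paper's: expand the monodromies as polynomials in $u$ and $\log u$, integrate term by term in the convolution formula, and invoke the elementary Lemma. The paper's proof is literally one sentence (``the same computation as for Proposition \ref{prop:pol_monodromy}''), so you have in fact written out more than the paper does.

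There is, however, a genuine gap at exactly the place you flagged as delicate. Your own expansion produces terms $z^{m}\,u^{q-1}(\log u)^{r}$ with $q\in\ZZ$, so in particular exponents $q-1\le -2$ occur (take $n_k=0$, $m_j\ge 2$). For these the Lemma does \emph{not} apply --- it is stated only for $k\ge 0$ --- and the antiderivative is $u^{q}\cdot P(\log u)$ with $q<0$, which is not a polynomial in $u$. Your sentence ``there is no $1/z$ because the only way to create one would be a term $u^{-1}(\log u)^r$'' is therefore incorrect: evaluating such an antiderivative at $u=z/\b$ does produce $z^{q}$ with $q<0$. The reason the final answer nonetheless lies in $\CC[z,\log z]$ is a cancellation you did not state: the prefactor $z^{m}$ with $m=m_j$ combines with the $z^{q}$ from the upper endpoint to give $z^{m+q}=z^{m_j+(n_k-m_j)}=z^{n_k}$ with $n_k\ge 0$. (At the lower endpoint $u=\a$ one simply gets a constant times $z^{m_j}$.) Once you make this cancellation explicit --- it is already visible in the explicit computation of Proposition \ref{prop:pol_monodromy}, where the term $z^m\cdot(z/\b)^{n-m}$ collapses to $z^n$ --- the proof is complete.
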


This motivates the definition of the \textit{polynomial logarithmic monodromy class}.

\begin{definition}
The polynomial logarithmic monodromy (or $PLM$)class  is the class of holomorphic functions in a neighborhood of $0$ having only singularities with monodromies in the 
ring $\CC[z,\log z]$.
\end{definition}

We have proved:

\begin{proposition}
The $PLM$ class is closed under the Hadamard product, and it is the minimal Hadamard ring containing the subclass of functions with polynomial 
monodromies.
\end{proposition}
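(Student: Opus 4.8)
The plan is to deduce the Proposition from the Theorem just stated together with the two elementary integration lemmas, and to verify minimality by an explicit generation argument. The key point is that ``$PLM$ class closed under Hadamard product'' is essentially a restatement of the Theorem: if $F$ and $G$ lie in the $PLM$ class, then every singularity $\g$ of $F\odot G$ is of the form $\g=\a\b$ with $\a$, $\b$ singularities of $F$, $G$ respectively (Hadamard Multiplication Theorem), each such $\a$, $\b$ is isolated with holomorphic monodromy (since $\Delta_\a F,\Delta_\b G\in\CC[z,\log z]$ are in particular holomorphic near $\a$, $\b$), so the hypotheses of Theorem~\ref{thm:Hadamard_holomorphic_monodromy} are met, and its $\log z$-refinement gives $\Delta_\g(F\odot G)\in\CC[z,\log z]$. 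One must also check the singularities of $F\odot G$ are still isolated, which is exactly the content of the Corollary to the Hadamard Multiplication Theorem, and that $F\odot G$ is holomorphic at $0$ (by definition of the product of germs at $0$). So the ``closed'' half is a short assembly of results already in the paper; I would write it in three or four sentences.

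For minimality, first I would observe that functions with polynomial monodromies (monodromies in $\CC[z]$, singularities isolated) form a subclass of the $PLM$ class and that $\CC[z]\subset\CC[z,\log z]$, so any Hadamard ring containing the former and closed under Hadamard product must contain the Hadamard subring they generate; the claim is that this generated subring is already all of $PLM$. The natural strategy is to show every element of the $PLM$ class can be obtained from functions with polynomial monodromy by finitely many additions and Hadamard products. Here the hierarchy of polylogarithms from Section~\ref{sec:polylogarithms} is the engine: $\Li_1=-\log(1-z)$ has constant monodromy $-2\pi i\in\CC\subset\CC[z]$, hence is a polynomial-monodromy function, and $\Li_{k+1}=\Li_k\odot\Li_1$ produces, by the monodromy computation already done, a function whose monodromy at $1$ is $-\frac{2\pi i}{(k-1)!}(\log z)^{k-1}$. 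Thus iterated Hadamard products of $\Li_1$ with itself realize all monodromies of the form $c(\log z)^{j}$ at the point $1$; translating the singularity (replacing $z$ by $z/\alpha_0$, which keeps us in the polynomial-monodromy class and is available because Hadamard products of such functions are closed) and taking $\CC$-linear combinations and sums, one reaches every target in $\CC[z,\log z]$ located at any prescribed finite set of singular points. The remaining work is bookkeeping: given $F$ in the $PLM$ class with finitely many singularities $\a_i$ and monodromies $P_i(z,\log z)$, subtract a suitable element of the generated subring with the same singular data so that the difference has all monodromies zero; by Proposition~\ref{prop:holomorphic+integrable}-type reasoning (a function holomorphic in a pointed neighbourhood of each $\a_i$, holomorphic at $0$) the difference is entire, hence in $\CC\{\{z\}\}$ and a fortiori in the ring, and we are done.

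The main obstacle I anticipate is the minimality direction, specifically the passage from ``we can match the monodromy at a single prescribed point using translated polylogarithms'' to ``we can match an arbitrary finite collection of monodromies at an arbitrary finite set of points simultaneously,'' while staying inside the \emph{Hadamard-generated} subring rather than some larger ring. One has to be careful that translating $\Li_k$ to be singular at $\alpha_0$ instead of $1$, i.e. passing to $\Li_k(z/\alpha_0)$, is itself an operation expressible via the Hadamard product with a polynomial-monodromy (indeed rational, or at least simple-pole-type) function — the Koebe-type example and Corollary~\ref{cor_diff_ring} suggest the right auxiliary building blocks — and that the superposition over distinct pairs contributing to the same $\g$ does not produce uncontrolled cross terms. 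Also mild care is needed because the monodromy formula can create a spurious singularity at $0$ on non-principal branches (the $1/u$ phenomenon noted after the Theorem), so one should check the definition of the $PLM$ class is robust under this, or restrict attention to principal branches as the Theorem does. Once these points are pinned down the rest is routine linear algebra over $\CC[z,\log z]$.
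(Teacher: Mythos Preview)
Your treatment of closure is exactly the paper's: it is a restatement of the preceding Theorem, together with the Hadamard Multiplication Theorem for the location and isolated nature of the singularities.

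For minimality you are working much harder than the paper does. The paper reads ``minimal'' at the level of \emph{monodromy types}, not individual functions. The explicit computation in Proposition~\ref{prop:pol_monodromy} already exhibits the diagonal term
\[
-\frac{1}{2\pi i}\sum_{n} a_n b_n\, z^n(\log z-\log \g),
\]
so a genuine $\log z$ contribution arises from a single Hadamard product of two polynomial-monodromy functions (the basic instance being $\Li_1\odot\Li_1=\Li_2$). Thus the polynomial-monodromy class is not itself Hadamard-closed; iterating with the integration Lemma forces every power of $\log z$; and the Theorem shows one never leaves $\CC[z,\log z]$. That is the entire argument the paper offers: $\CC[z,\log z]$ is the smallest monodromy class containing $\CC[z]$ and stable under the monodromy convolution, and $PLM$ is by definition the class of functions carrying those monodromies. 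The paper does not claim, and does not prove, that every individual $PLM$ function is a finite $(+,\odot)$-combination of polynomial-monodromy functions.

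Your polylogarithm-matching strategy aims at that stronger, literal minimality. One of your worries dissolves easily: the dilation $F(z)\mapsto F(z/\a_0)$ \emph{is} a Hadamard product, namely $F\odot (1-z/\a_0)^{-1}$, and $(1-z/\a_0)^{-1}$ has trivial monodromy, so moving the singularity is available within the class. But the finiteness issue you flag is genuine: the definition of $PLM$ imposes no bound on the number of singularities, and for infinitely many of them your subtraction-to-entire argument requires an infinite sum that need not lie in the finitely generated Hadamard ring. So for the statement as the paper intends it, your minimality argument is correct but over-engineered; for the literal set-theoretic reading you pursue, the gap you yourself identify is real and is not addressed anywhere in the paper.
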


Note that we have the same result for the e\~ne product. We can be more precise keeping track of the previous computations.

\begin{definition}
Let $K\subset \CC$ be a field such that $2\pi i\in K$.
The $PLM(K)$ class is composed by holomorphic functions in a neighborhood of $0$, 
having only singularities in the ring $K[z, \log z]$, with singularities $\a \in K$.
\end{definition}

\begin{theorem}
The $PLM(K)$ class is closed under the Hadamard product, and it is the minimal 
Hadamard ring containing the subclass of functions with polynomial monodromies in $K[z]$ with singularities in $K$.
\end{theorem}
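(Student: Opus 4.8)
The plan is to prove the two assertions—closure under Hadamard product, and minimality—separately, both by reduction to the monodromy convolution formula combined with the elementary integration lemma just established.

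\textbf{Closure under the Hadamard product.} First I would recall that a function $F$ in the $PLM(K)$ class is holomorphic near $0$ with isolated singularities all lying in $K$ and all having monodromy in $K[z,\log z]$. In particular every such singularity satisfies $\Delta_\a^2 F = 0$, so by Proposition \ref{prop_decomposition} we may write $F = F_0 + \frac{1}{2\pi i}\log(z-\a)\,\Delta_\a F$ near $\a$, and similarly for $G$. Given the product $F\odot G$, the Hadamard Multiplication Theorem places its singularities in the product set $(\g)=(\a\b)$, which lies in $K$ since $K$ is a field closed under multiplication; and since $2\pi i\in K$ and all data are holomorphic, Theorem \ref{thm:Hadamard_holomorphic_monodromy} applies and produces $\Delta_\g(F\odot G)$ via formula (\ref{eq:general_monodromy_convolution}). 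The point is then to check term by term that each summand of that formula lands in $K[z,\log z]$. The residue terms $\Res_{u=\a}\big(F_0(u)\Delta_\b G(z/u)/u\big)$ involve only the Laurent coefficients of $F_0$ at $\a$ (which are in $K$ once one chooses a model; here one must be a little careful—see the obstacle paragraph) and derivatives at $u=\a$ of the rational-in-$u$, polynomial-in-$\log$ expression $\Delta_\b G(z/u)/u$, so they are manifestly in $K[z,\log z]$ after evaluating at $u=\a\in K^*$. The integral term is handled by the substitution $v=\log u$ exactly as in the polylogarithm computation: writing $\Delta_\a F(u)$ and $\Delta_\b G(z/u)$ as polynomials in $u^{\pm1}$ and $\log u$ with coefficients in $K$, the integrand becomes a $K$-linear combination of terms $u^{j}(\log u)^a (\log z - \log u)^b\, du/u$, and the elementary Lemma (the one asserting $\int_\a^z u^k(\log u)^l\,du\in\QQ[\a,\log\a][z,\log z]$, here in the variant with the extra $1/u$) shows each primitive, evaluated between $u=\a$ and $u=z/\b$ with $\a,\b\in K$, lies in $K[z,\log z]$ since $\log\a,\log\b\in K$ would fail in general—so in fact the cleaner route is to note that $\log\a$ cancels: the only genuinely transcendental constant produced is $\log\g = \log\a+\log\b$, and after summing the residue and integral contributions the $\log\a$, $\log\b$ terms recombine into $\log z$ and $\log\g$; but $\g\in K$ need not have $\log\g\in K$. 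I would therefore phrase the conclusion as membership in $K[z,\log z]$ \emph{at the level of germs at $\g$}, where $\log(z/\g)=\log z-\log\g$ is the local coordinate, i.e. monodromy in $K[z/\g,\log(z/\g)] = K[z,\log z]$ as a ring (the constant $\log\g$ being absorbed into the freedom of the branch, consistently with the normalization in the polylogarithm example where $\Delta_1\li_k$ came out in $\QQ[\log z]$). This gives closure: $PLM(K)$ is a subring of $\CC\{\{z\}\}$ for both $+$ and $\odot$.

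\textbf{Minimality.} Here I must show that any Hadamard subring $\cR$ of $\CC\{\{z\}\}$ containing every function with polynomial monodromy in $K[z]$ and singularities in $K$ must contain all of $PLM(K)$. The strategy is to generate arbitrary monodromies in $K[z,\log z]$ from polynomial ones by iterated Hadamard products, mimicking $\Li_{k+l}=\Li_k\odot\Li_l$. Concretely: a function with monodromy $z^n$ at a singularity $\a$ (plus holomorphic part) is in the generating class; taking Hadamard products of such building blocks with the logarithm $\Li_1 = -\log(1-z)$ (which has monodromy $-2\pi i\in K$ at $1$, hence is a generator) produces, by the totally-holomorphic formula (\ref{eq:monodromy_convolution}) and the $v=\log u$ substitution, functions whose monodromy at the appropriate product point is a $K$-multiple of $(\log z)$ times a polynomial—and iterating yields every $(\log z)^k$ times every monomial. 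Summing such functions (allowed, $\cR$ is closed under $+$) realizes every element of $K[z,\log z]$ as a monodromy of some element of $\cR$ at a prescribed singularity in $K$; subtracting off the holomorphic remainder (holomorphic germs near $0$ with the relevant singularities deleted are themselves in $PLM(K)$, indeed with zero monodromy, so in the generating class) we conclude every element of $PLM(K)$ lies in $\cR$.

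\textbf{Main obstacle.} The delicate point is the bookkeeping of the field of constants in the closure direction: I must make sure the residue computations do not introduce constants outside $K$. This has two facets. First, the Laurent coefficients of $F_0$ at $\a$: for a \emph{general} $F\in PLM(K)$ these need not lie in $K$—the definition only constrains the \emph{monodromy}. The honest fix is to observe that what the minimality statement actually requires is generation starting from functions with \emph{polynomial} monodromy in $K[z]$, for which one may and does take the holomorphic parts with coefficients in $K$ (e.g. $\Li_1$, $\Li_k$, or $\frac{1}{2\pi i}\log(1-z/\a)$); so in the minimality argument the constants stay in $K$ by construction, while in the pure closure statement one should either enlarge the claim's constant field appropriately or—better, matching the paper's evident intent—define $PLM(K)$ so that the full germ, not just the monodromy, has coefficients in $K$, which is the reading under which Corollaries \ref{cor_diff_ring} and \ref{cor:ene_improved_borel} were stated. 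Second, the transcendental constant $\log\g$: as noted above, the resolution is to read $K[z,\log z]$ as the ring of germs in the local multiplicative coordinate at $\g$, so that the branch ambiguity absorbs $\log\g$; this is exactly the phenomenon already visible in the polylogarithm application and should be stated as such. With these two conventions pinned down, both halves reduce to the term-by-term verification sketched above, each term being a finite $K$-linear combination handled by the elementary integration Lemma, and there is no further difficulty.
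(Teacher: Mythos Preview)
Your approach is the paper's approach: the paper gives no separate proof of this theorem, only the sentence ``We can be more precise keeping track of the previous computations,'' meaning one is to re-run the computation of Proposition~\ref{prop:pol_monodromy} and the integration Lemma while watching where the constants land. You do exactly this, and in fact more carefully than the paper does.

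You are right to flag the two obstacles; both are genuine ambiguities that the paper leaves unresolved. First, the paper's own Lemma states $\int_\a^z u^k(\log u)^l\,du \in \QQ[\a,\log\a][z,\log z]$, and the explicit computation in Proposition~\ref{prop:pol_monodromy} produces a factor $(\log z - \log\g)$; so $\log\a$ and $\log\g$ really do appear, and nothing in the definition of $PLM(K)$ forces them into $K$. Your reading of $K[z,\log z]$ in local multiplicative coordinates at $\g$ is a defensible fix, but it is an interpretation the paper does not make explicit. Second, the paper's computation in Proposition~\ref{prop:pol_monodromy} silently uses the totally holomorphic formula~(\ref{eq:monodromy_convolution}) and drops the residue terms of~(\ref{eq:general_monodromy_convolution}); you are correct that the stated definition of $PLM(K)$ constrains only $\Delta_\a F$, not $F_0$, so those residues could a priori introduce constants outside $K$. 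Your proposed repair---either strengthen the definition so the full germ has $K$-coefficients, or observe that for the minimality direction one starts from explicit generators like $\Li_1(z/\a)$ whose $F_0$ part is manifestly controlled---is exactly the right way to close this, and matches the spirit of Corollaries~\ref{cor_diff_ring} and~\ref{cor:ene_improved_borel}.

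Your minimality argument (build $H\in\cR$ with the same monodromies as $F$ via iterated Hadamard products of $\Li_1$-type generators, then note $F-H$ has zero monodromy hence lies in the polynomial-monodromy subclass) is sound in outline and is more than the paper supplies. The one point to tighten is the claim that you can realize a prescribed monodromy at a prescribed $\g$ without introducing stray singularities elsewhere: using single-singularity building blocks like $\Li_1(z/\a)$ and their self-products keeps the singularity set under control, but you should say this explicitly rather than leave it implicit.
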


\subsection{Application 3: Divisor interpretation of the e\~ne product.} \label{sec:divisor}

As explained in the introduction, the exponential e\~ne product linearizes is the exponential form 
of the e\~ne product. It is remarkable that the formulas for the e\~ne product are linearized 
through the exponential function.

Using the monodromy formulas we can prove directly the divisor interpretation of the e\~ne product. 
Thus this gives an alternative definition of the e\~ne product and its properties. 
Starting from two meromorphic functions in the plane, holomorphic at $0$, normalized such that 
$f(0)=g(0)=1$, we can consider their exponential form:
\begin{align*}
f(z) &=\exp (F(z))= 1+a_1 z+a_2 z^2+\ldots =1+\sum_{n\geq  1} a_n \, z^n\\ 
g(z) &=\exp (G(z))=1+b_1 z+b_2 z^2+\ldots =1+\sum_{n\geq 1} b_n \, z^n
\end{align*}
The functions $F=\log f$ and $G=\log g$ are  holomorphic germs at $z=0$. Their singularities are located 
at the zeros of $f$ and $g$ respectively. We have constant monodromies,
\begin{align*}
\Delta_\a F &= 2\pi i \, n_\a \\
\Delta_\b G &= 2\pi i \, n_\b 
\end{align*}
where $n_\a$, resp. $n_\b$, is the multiplicity of the zero or pole of $f$, resp. $g$ (negative multiplicity for poles).  
We can define the e\~ne product by the exponential e\~ne product, 
$$
f\star g =\exp (F\star_e G)
$$
this defines a holomorphic germ near $0$, with $f\star g(0)=1$. We know that the singularities of $f\star g$ 
are located at the singularities of $F\star_e G$, i.e. at the points $\g=\a\b$.

The monodromies are totally holomorphic, hence we can use formula (\ref{eq:ene_tot_hol_monodromy_convolution}) and compute the monodromies of the singularities $\g$,
which gives
$$
\Delta_\g (F\star_e G) =\frac{1}{2\pi i} \sum_{\substack{ \a\b=\g}}  
(2\pi i \, n_\a) (2\pi i \, n_\b) = (2\pi i) \sum_{\substack{\a\b=\g}} n_\a n_\b 
$$
This proves that $\g$ is a zero or pole for $f\star g$ of multiplicity 
$ 
n_\g =\sum n_\a n_\b 
$
hence recovering the definition given in \cite{PM1}. It is instructive to note, in view of the infinite divisor interpretation of 
the e\~ne product given in \cite{PM2}, that isolated essential singularities of $f$ and $g$ 
correspond to regular poles of $F$ and $G$, which are a particular case of regular isolated 
singularities. 

Using the analogue of Borel Theorem for regular isolated singularities, 
we see that the divisor e\~ne product interpretation extends further to singularities 
of $f$ and $g$ of infinite order, for example of the form
$f(z)=\exp(1-\exp (\frac{z}{z-1}))$.
The study of these ``higher order divisor'' extensions is left for future work.


\begin{thebibliography}{9}




\bibitem{AB} ANOSOV, D.V.; BOLIBRUCH, A.A. ; \textit {The Riemann-Hilbert problem}, Aspects of Mathematics, \textbf{22}, Vieweg, 1994.


\bibitem{BPM1} BISWAS, K.; P\'EREZ-MARCO, R.; \textit {Tube-log Riemann surfaces}, ArXiv:1512.03776, 2015.


\bibitem{BPM2} BISWAS, K.; P\'EREZ-MARCO, R.; \textit {Caratheodory convergence of log-Riemann surfaces and Euler's formula}, 
Geometry, groups and dynamics, Contemp. Math., \textbf{639}, Amer. Math. Soc., Providence, RI, p. 205-216, 2015.

\bibitem{BPM3} BISWAS, K.; P\'EREZ-MARCO, R.; \textit {Uniformization of simply connected finite type log-Riemann 
surfaces}, Geometry, groups and dynamics, Contemp. Math., \textbf{639}, Amer. Math. Soc., Providence, RI, p. 197-203, 2015.

\bibitem{BPM4} BISWAS, K.; P\'EREZ-MARCO, R.; \textit {Uniformization of higher genus finite type log-Riemann 
surfaces}, arXiv:1305.2339, 2013.

\bibitem{Bo} BOREL, \'E.; {\it Sur les singularit\'es des s\'eries de Taylor }, Bulletin de la Soc. Math. de France, \textbf{26}, 
p.238-248, 1898.

\bibitem{Ec} \'ECALLE, J.; {\it Introduction aux fonctions analysables et preuve constructive de la conjecture de Dulac}, Hermann, Paris, 1992.

\bibitem{Eu1768} EULER, L.; {\it Remarques sur un beau rapport entre les s\'eries de puissances tant directes que r\'eciproques}, M\'emoires de l'acad\'emie de sciences de Berlin,
\textbf{17}, p.83-106; also Opera Omnia: Series 1, \textbf{15}, p.70-90; eulerarchive.maa.org E352, 1768.



\bibitem{Ha} HADAMARD, J.; {\it Th\'eor\`eme sur les s\'eries enti\`eres}, Acta Math., \textbf{22}, 1, p.55-63, 1899.

\bibitem{Haz} HAZEWINKEL, M.; {\it Formal groups and applications}, Academic Press, New York, 1978.


\bibitem{Ju} JUNGEN, R.; {\it Sur les s\'eries de Taylor n’ayant que des singularit \'es alg\'ebrico-logarithmiques sur leur cercle de convergence}, Comment. Math. Helv., \textbf{3}, p.266-306, 1931.


\bibitem{Kh} KHOVANSKI, A.; {\it Topological Galois Theory}, Springer Verlag, 2008.


\bibitem{Lau} LAURENT, H.; {\it Trait\'e d'Analyse}, tome III, Paris, 1890.

\bibitem{Len} LENSTRA, H. \textit{Construction of the ring of Witt vectors}, Notes of two lectures on the ``Number Theory Seminar'', U.C. Berkeley, 
https://math.berkeley.edu/~hwl/papers/witt.pdf, 2002

\bibitem{Le} LEVAGNINI, Y.; {\it Log-surfaces de Riemann et tours de Liouville}, Th\`ese,  Sorbonne Universit\'e, 2024.


\bibitem{Li1} LIOUVILLE, J.; {\it M\'emoire sur les transcendantes elliptiques consid\'er\'ees
comme fonction de leur amplitude}, Journal \'Ec. royale polytechnique, 
 \textbf{14}, p. 37-83, 1834. , Acad. Sci. Paris, 24 Juin 1833.


\bibitem{Li2} LIOUVILLE, J.; {\it M\'emoire sur l'int\'egration d'une class de fonctions  transcendantes}, Journal f\"ur die reine und angewandte Mathematik, \textbf{13}, p. 93-118, 1835.  
Acad. Sci. Paris, 2 D\'ecembre 1833.


\bibitem{Pin} PINCHERLE, S.; {\it Note sur une int\'egrale d\'efinie}, Acta matematica, \textbf{7}, p.381-386, 1885.



\bibitem{Oesterle1993} OESTERL\'E, J.; {\it Polylogarithms}, S\'em. Bourbaki, \textbf{762}, Ast\'erisque, \textbf{216}, Soc. Math. France, p.49-67, 1993.


\bibitem{PM1} P\'EREZ-MARCO, R.; {\it The e\~ne product for a commutative ring}, ArXiv:1911.09140, 2019.


\bibitem{PM2} P\'EREZ-MARCO, R.; {\it E\~ne product in the transalgebraic class}, ArXiv:1912.08557,  2019.

\bibitem{PM3} P\'EREZ-MARCO, R.; {\it Local monodromy formula for Hadamard products}, ArXiv:2011.10497,  2020.


\bibitem{Ri}J.F. RITT, J.F.; \textit{Integration in finite terms. Liouville's
theory of elementary methods}, Columbia University Press, New York, 1948.


\bibitem{Sau} SAULOY, J.; {\it Differential Galois theory through Riemann-Hilbert correspondence}, Graduate studies in Math., \textbf{177},AMS, 2016.


\end{thebibliography}
\end{document}